\numberwithin{equation}{section}
\def\p{\partial}
\def\o{\overline}
\def\b{\bar}
\def\mb{\mathbb}
\def\mc{\mathcal}
\def\n{\nabla}
\theoremstyle{plain}
\newtheorem{thm}{Theorem}[section]
\newtheorem{lemma}[thm]{Lemma}
\newtheorem{prop}[thm]{Proposition}
\newtheorem{cor}[thm]{Corollary}
\theoremstyle{definition}
\newtheorem{rem}[thm]{Remark}
\theoremstyle{definition}
\newtheorem{defn}[thm]{Definition}
\newcommand{\comment}[1]{}
\begin{document}

\title{the asymptotic of curvature of direct image bundle associated with higher powers of a relative ample line bundle}
\makeatletter
\makeatother
\author{Xueyuan Wan}
\author{Genkai Zhang}

\address{Xueyuan Wan: Mathematical Sciences, Chalmers University of Technology, 41296 Gothenburg, Sweden}
\email{xwan@chalmers.se}

\address{Genkai Zhang: Mathematical Sciences, Chalmers University of Technology, 41296 Gothenburg, Sweden}
\email{genkai@chalmers.se}

\begin{abstract}
Let $\pi:\mc{X}\to M$ be a holomorphic fibration
 with compact fibers and $L$ a relative ample line bundle over $\mc{X}$. 
We obtain the asymptotic of the curvature of $L^2$-metric and Qullien metric on the direct image bundle $\pi_*(L^k+K_{\mc{X}/M})$ up to the lower order terms than $k^{n-1}$ for large $k$. As an application we prove that
 the analytic torsion $\tau_k(\b{\p})$ satisfies 
  $\p\b{\p}\log(\tau_k(\b{\p}))^2=o(k^{n-1})$, where $n$ is the dimension of fibers.  
 \end{abstract}
\maketitle

\section{Introduction}

Let $\pi:\mc{X}\to M$ be a holomorphic fibration with compact fibers
and  $L$ a relative ample line bundle over $\mc{X}$, i.e. there is a smooth metric (weight) $\phi$ on $L$ such that the first Chern form $\frac{\sqrt{-1}}{2\pi}\p\b{\p}\phi$ is positive $(1,1)$-form along each fiber. One may consider the following direct image bundle $$E^k=\pi_*(L^k+K_{\mc{X}/M}).$$ Here
 $K_{\mc{X}/M}=
K_{\mc{X}}-\pi^*K_M$
 denotes the relative canonical line bundle.
 The bundle
$E^k$ is 
equipped with the canonical $L^2$-metric 
\begin{align}
\|u\|^2:=\int_{\mc{X}_y}|u|^2e^{-\phi}, \quad
u\in E_y^k, \quad y\in M; 
\end{align}
see \cite{Bern2, Bern3, Bern4}. 
Here $|u|^2e^{-\phi}$ is defined as follows: $u$ can be written locally as $u=u' dv\wedge e$, where $e$ is a local holomorphic frame for $L|_{\mc X_y}$, $\mc{X}_y=\pi^{-1}(y)$, and 
$$|u|^2e^{-\phi}:=(\sqrt{-1})^{n^2}|u'|^2 |e|^2dv\wedge d\b{v}=(\sqrt{-1})^{n^2}|u'|^2 e^{-\phi}dv\wedge d\b{v},$$
where $dv=dv^1\wedge \cdots\wedge dv^n$.

In \cite{Bern2, Bern4} Berndtsson  
computed the curvature  $\Theta^{E_k}$ of the $L^2$-metric and consequently proved
Nagano positivity.
More precisely,
\begin{align}\label{0.4}
\langle\sqrt{-1}\Theta^{E_k}u,u\rangle=\int_{\mc{X}/M}kc(\phi)|u|^2e^{-k\phi}+k\langle (\Delta'+k)^{-1} i_{\mu_{\alpha}}u, i_{\mu_{\beta}}u\rangle\sqrt{-1}dz^{\alpha}\wedge d\b{z}^{\beta},	
\end{align}
where the definitions of $c(\phi)$, $\mu_{\alpha}$ and $\Delta'$ 
are given in Theorem \ref{BF1}, (see e.g. \cite[Theorem 1.2]{Bern4}). 

In particular, if $\pi:\mc{X}\to M$ is a trivial fibration, Berndtsson \cite[Theorem 4.1, 4.2]{Bern3} obtained an asymptotic of $tr\Theta^{E_k}/d_k$ up to $o(1)$, $d_k=\text{rank} E_k$.  
In view of the relation of the $L^2$-curvature with
analytic torsion and  Quillen metric
it is a natural and interesting problem to find 
the lower order terms in the asymptotic of $tr\Theta^{E_k}$ for a general fibration.  We solve the problem
up to reminder term of order $o(k^{n-1})$.
\begin{thm}\label{main theorem 1}
For any vector $\zeta\in T_yM$, we have 
	\begin{align}\label{0.7}
\begin{split}
	&\quad -\sqrt{-1}c_1(E^k,\|\bullet\|_k)(\zeta,\b{\zeta})\\
	&=\frac{k^{n+1}}{(2\pi)^{n+1}}\int_{\mc{X}_y}(-\sqrt{-1})c(\phi)(\zeta,\b{\zeta})\frac{\omega^{n}}{n!}+\frac{k^n}{(2\pi)^{n+1}}\int_{\mc{X}_y}\left(\frac{1}{2}|\mu|^2-\frac{\rho}{2}(-\sqrt{-1})c(\phi)(\zeta,\b{\zeta})\right)\frac{\omega^n}{n!}\\
	&+\frac{k^{n-1}}{(2\pi)^{n+1}}\int_{\mc{X}_y}\left((-\sqrt{-1})c(\phi)(\zeta,\b{\zeta})\left(-\frac{1}{6}\Delta\rho+\frac{1}{24}(|R|^2-4|Ric|^2+3\rho^2)\right)-\frac{\rho}{4}|\mu|^2\right)\frac{\omega^n}{n!}\\
	&+\frac{k^{n-1}}{(2\pi)^{n+1}}\left(\frac{1}{12}\|\mu\|^2_{Ric}+\frac{1}{12}\|\n'\mu\|^2-\frac{1}{4}\|\b{\p}^*\mu\|^2\right)+o(k^{n-1}).
\end{split}	
\end{align}
\end{thm}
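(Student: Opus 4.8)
The plan is to start from Berndtsson's curvature formula \eqref{0.4}. Fixing an $L^2$-orthonormal basis $\{u_j\}$ of $E^k_y$ and taking the trace over $E^k_y$, so that $-\sqrt{-1}\,c_1(E^k,\|\bullet\|_k)=\tfrac1{2\pi}\operatorname{tr}\Theta^{E^k}$, formula \eqref{0.4} becomes
\begin{align*}
-\sqrt{-1}\,c_1(E^k,\|\bullet\|_k)(\zeta,\b\zeta)=\frac1{2\pi}\sum_j\left(\int_{\mc X_y}k(-\sqrt{-1})c(\phi)(\zeta,\b\zeta)|u_j|^2e^{-k\phi}+k\,\langle(\Delta'+k)^{-1}i_{\mu_\zeta}u_j,i_{\mu_\zeta}u_j\rangle\right),
\end{align*}
where $\mu_\zeta=\sum_\alpha\zeta^\alpha\mu_\alpha$. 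So the theorem splits into two tasks: (I) the asymptotic expansion of the fiberwise Bergman density $\mc B_k:=\sum_j|u_j|^2e^{-k\phi}$ of $H^0(\mc X_y,L^k+K_{\mc X_y})$; (II) the asymptotic expansion of $\operatorname{Tr}\!\big(k(\Delta'+k)^{-1}i_{\mu_\zeta}\Pi_k\,i_{\mu_\zeta}^{*}\big)$, $\Pi_k$ the fiberwise Bergman projection. Here all curvature quantities $\rho,R,Ric$ and $\mu_\zeta$, and the volume $\tfrac{\m^n}{n!}$, refer to the polarization metric $\m=$ the fiberwise Chern form of $(L,\phi)$.

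For (I), I would invoke the Tian--Yau--Zelditch--Catlin--Lu expansion in the form adapted to the canonical twist $L^k+K_{\mc X/M}$ (cf.\ Ma--Marinescu, Dai--Liu--Ma, and \cite{Bern3} for the trivial fibration): uniformly in $y$,
\begin{align*}
\mc B_k=\frac{k^n}{(2\pi)^n}\left(1-\frac{\rho}{2k}+\frac1{k^2}\Big(-\frac16\Delta\rho+\frac1{24}\big(|R|^2-4|Ric|^2+3\rho^2\big)\Big)+O(k^{-3})\right)\frac{\m^n}{n!}.
\end{align*}
Inserting this into the first summand above and integrating over $\mc X_y$ produces precisely the $k^{n+1}$ term, the $-\tfrac\rho2(-\sqrt{-1})c(\phi)(\zeta,\b\zeta)$ part of the $k^n$ term, and the $(-\sqrt{-1})c(\phi)(\zeta,\b\zeta)\big(-\tfrac16\Delta\rho+\tfrac1{24}(|R|^2-4|Ric|^2+3\rho^2)\big)$ part of the $k^{n-1}$ term.

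Task (II) is the delicate part. First, $i_{\mu_\zeta}u_j$ is $\b\p$-closed (as $\mu_\zeta$ is $\b\p$-closed and $u_j$ holomorphic) of type $(n-1,1)$ with values in $L^k$, and $H^{n-1,1}(\mc X_y,L^k)=H^1(\mc X_y,\Omega^{n-1}\otimes L^k)=0$ for $k\gg0$ by Serre vanishing; with a Bochner--Kodaira argument this gives $\Delta'\ge c\,k$ on this form sector, so $(\Delta'+k)^{-1}$ is uniformly $\asymp k^{-1}$ and its kernel, like that of $\Pi_k$, decays exponentially off the diagonal at scale $k^{-1/2}$. Rescaling the fiber by $k^{-1/2}$ around each point, the metric, $c(\phi)$, the Ricci form and $\mu_\zeta$ tend to their flat models, $\Delta'$ to a model Kodaira Laplacian of harmonic-oscillator type, $\Pi_k$ to the Bargmann projection, and $i_{\mu_\zeta}$ to contraction with the constant part of $\mu_\zeta$; in the model $i_{\mu_\zeta}$(vacuum) lands in a single excited level on which $k(\Delta'+k)^{-1}$ acts as $\tfrac12$ to leading order (as in \cite{Bern3}). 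The leading trace is therefore $\tfrac12\int_{\mc X_y}|\mu|^2\mc B_k$, which yields the $\tfrac12|\mu|^2$ part of the $k^n$ term, and through the $-\rho/2$ coefficient of $\mc B_k$ the $-\tfrac\rho4|\mu|^2$ part of the $k^{n-1}$ term. The remaining $k^{n-1}$ contribution needs genuine second-order data: Taylor-expand the fiber metric, $c(\phi)$, the Ricci form and $\mu_\zeta$ to second order, use the order-$k^{-1}$ term of the off-diagonal expansion of $\Pi_k$ for $L^k+K_{\mc X/M}$, and run first- and second-order perturbation theory for the model resolvent. Since $\b\p\mu_\zeta=0$, only $\n'\mu_\zeta$ and $\b\p^{*}\mu_\zeta$ survive among its first derivatives; the subleading term of the Bochner--Kodaira curvature operator contributes the fiber Ricci tensor; and the Gaussian (harmonic-oscillator) integrals supply the rational coefficients, producing $\tfrac1{12}\|\mu\|^2_{Ric}+\tfrac1{12}\|\n'\mu\|^2-\tfrac14\|\b\p^{*}\mu\|^2$.

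The step I expect to be the main obstacle is exactly this last assembly in (II): organizing the near-diagonal Bergman expansion, the perturbative expansion of $(\Delta'+k)^{-1}$ on the $(n-1,1)$-form sector, and the relation $\b\p\mu_\zeta=0$ into the three intrinsic quantities with the precise coefficients $\tfrac1{12},\tfrac1{12},-\tfrac14$, i.e.\ tracking many curvature contractions without sign or combinatorial errors. A secondary point is to control all remainders as $o(k^{n-1})$ uniformly in $y\in M$ — which rests on the off-diagonal exponential decay of the kernels of $\Pi_k$ and $(\Delta'+k)^{-1}$ — and to legitimize the interchange of the operator trace with the rescaling and perturbation expansions, including the contribution of the high-energy spectrum of $\Delta'$ where no finite resolvent expansion holds.
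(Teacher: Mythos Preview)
Your split into tasks (I) and (II) and the treatment of (I) via the Tian--Yau--Zelditch expansion for $L^k+K_{\mc X/M}$ match the paper exactly; the first summand is handled just as you say and yields the $c(\phi)$-terms at orders $k^{n+1},k^n,k^{n-1}$.

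For task (II), however, the paper takes a genuinely different route from your rescaling/model-operator/perturbation scheme. Instead of any microlocal analysis or off-diagonal Bergman kernel expansion, the authors write down an \emph{exact algebraic identity} for the resolvent (their Lemma~\ref{expanding}):
\[
(\Delta'+k)^{-1}=\tfrac1{2k}+\tfrac1{6k^2}(k-\Delta'-R^*)+\tfrac1{4k^2}R^*+\tfrac1{4k^2}(\Delta'+k)^{-1}(k-\Delta')R^*+\cdots
\]
(seven terms in all), together with the pointwise K\"ahler identity $(k-\n'^*\n'-R^*)\alpha=(dv^t)^*\n'(\n_{\b t}\alpha)$ on $(n-1,1)$-forms (Lemma~\ref{lemma1}). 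Each term, when sandwiched between $i_\mu u$ and $i_\mu u$, is then computed or estimated by hand (Lemmas~\ref{lemma4}--\ref{lemma5}): the first three terms give $\tfrac1{2k}|\mu|^2$, $-\tfrac1{6k^2}|\o\n\mu|^2$, $\tfrac1{4k^2}|\mu|^2_{R^*}$, and the remaining four are shown to be $O(k^{-5/2})$ or $O(k^{-3})$ using only $\|(\Delta'+k)^{-1}\|\le k^{-1}$ and repeated applications of Lemma~\ref{lemma1}. Multiplying by $k$ and by the Bergman density gives the answer; a final rewriting via Akizuki--Nakano converts $(|\o\n\mu|^2,|\mu|^2_{R^*})$ into $(\|\n'\mu\|^2,\|\mu\|^2_{Ric},\|\b\p^*\mu\|^2)$ with the coefficients $\tfrac1{12},\tfrac1{12},-\tfrac14$.

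What this buys the paper is that the ``main obstacle'' you correctly flag---extracting the three intrinsic coefficients from a second-order perturbation of the model resolvent---simply never arises: the coefficients fall out of a short algebraic computation plus integration by parts, with no rescaling, no model operator, and no high-energy tail to control beyond the trivial bound on $(\Delta'+k)^{-1}$. Your approach is in principle viable (it is the Ma--Zhang style), but as you yourself note, the assembly of the $k^{n-1}$ coefficient is left as a plan rather than a computation; the paper's algebraic decomposition is precisely the device that turns that plan into three lines.
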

We refer to
the subsection \ref{Asy1} for the definitions of the notation in (\ref{0.7}).

We note that the first two terms in the expansion above were proved by Ma-Zhang
\cite{Ma}. The leading terms of the first summand in
(\ref{0.4}) is studied by Sun \cite{Sun}
and the second by Berndtsson \cite{Bern3} (in the different
setup of trivial fibration with variation of K\"a{}hler metrics).

We shall then compare  our expansion
for the $L^2$-curvature with the Quillen curvature.
So let $D_y=\b{\p}_y+\b{\p}^*_y$  be the Dirac operator 
acting on 
$A^{0,*}(\mc{X}_y, L^{k}+K_{\mc{X}/M})$ of $(0, \ast)$-forms, where $\mc{X}_y$ is endowed
with the restricted Hermitian metric 
$(\sqrt{-1}\p\b{\p}\phi)|_{\mc{X}_y}$.
 For any $0<b<c$, denote by $D_i^{(b,c)}$ the restriction of $D$ on the sum of eigenspaces of $A^{0,i}(\mc{X}_y,  L^{k}+K_{\mc{X}/M})$ for eigenvalues in $(b,c)$,  $K_{\mc{X}/M}$ being equipped a natural Hermitian metic 
$$(\det\phi)^{-1}:=(\det(\p\b{\p}\phi|_{\mc{X}_y}))^{-1}.$$ Then the (Ray-Singer) analytic torsion is defined by
\begin{align*}
\tau_k(\b{\p})&=\tau_k(\b{\p}^{(b,+\infty)})\\
&=\left((\det (D_1^{(b,+\infty)})^2)(\det (D_2^{(b,+\infty)})^2)^{-2}(\det (D_3^{(b,+\infty)})^2)^3\cdots\right)^{1/2},
\end{align*}
and is a positive smooth function on the base manifold $M$.  Here $b$ is a constant less than all positive eigenvalues of $D$ (see Definition \ref{torsiondef}).

The Quillen metric $\|\bullet\|_Q$ on the determinant line $\lambda$ (see Definition \ref{line bundle}) is patched by the $L^2$-metric $|\bullet|^b$ on $\lambda^b$ (see (\ref{lambdab})) and the analytic torsion $\tau_k(\b{\p})$, i.e.
\begin{align}\label{0.9}
	\|\bullet\|_Q=|\bullet|^b\tau_k(\b{\p}),
\end{align}
where $b>0$ is a sufficiently small constant.

In their papers \cite{Bismut1,Bismut2, Bismut3}, 
J.-M. Bismut, H. Gillet and C. Soul\'{e} computed the curvature of Quillen metric for a locally K\"ahler family and obtained the differential form version of Grothendieck-Riemann-Roch Theorem. More precisely, 
 they  proved that as holomorphic bundles,
$$\lambda_y\cong \bigotimes_{i\geq 0}\det H^i(\mc{X}_y, L^{k}+K_{\mc{X}/M})^{(-1)^{i+1}}$$ and the curvature is
\begin{align}\label{0.5}
c_1(\lambda, \|\bullet\|_Q)=
-\left\{\int_{\mc{X}/M} Td\left(\frac{-R^{T_{\mc{X}/M}}}{2\pi i}\right)Tr\left[\exp\left(\frac{-R^{ L^{k}+K_{\mc{X}/M}}}{2\pi i}\right)\right]\right\}^{(1,1)}.
\end{align}

Since $L$ is a relative ample line bundle over $\mc{X}$,
 by Kodaira vanishing theorem, $$H^i(\mc{X}_y, K_{\mc{X}/M}+L^k)=0$$ for all $i\geq 1$. Therefore,
 \begin{align}\label{0.8}
 \lambda\cong (\det E^k)^{-1}.
 \end{align}
We expand also the Quillen curvature $c_1(\lambda, \|\bullet\|_Q)$ and compare
it with (\ref{0.7}). We prove {
 \begin{thm}\label{main theorem 2}
Up to  terms of order $o(k^{n-1})$ 
the Quillen curvature  $-c_1(\lambda,\|\bullet\|_Q)$ has the same expansion
 (\ref{0.7})  as for the $L^2$-curvature.
 \end{thm}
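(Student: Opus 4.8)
The plan is to evaluate the Quillen curvature directly from the Bismut--Gillet--Soul\'e formula (\ref{0.5}) and to match its expansion with (\ref{0.7}) coefficient by coefficient. The key point is that, with the fixed metrics $e^{-k\phi}$ on $L^k$ and $(\det\phi)^{-1}$ on $K_{\mc{X}/M}$, the Chern form entering (\ref{0.5}) is affine in $k$,
\begin{align*}
\frac{\sqrt{-1}}{2\pi}R^{L^k+K_{\mc{X}/M}}=k\,c_1(L,e^{-\phi})+c_1\big(K_{\mc{X}/M},(\det\phi)^{-1}\big),
\end{align*}
so the only $k$--dependence in the integrand of (\ref{0.5}) is the factor $\exp\!\big(k\,c_1(L,e^{-\phi})\big)$. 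Abbreviating $\Phi:=c_1(L,e^{-\phi})$ this gives
\begin{align*}
-c_1(\lambda,\|\bullet\|_Q)=\sum_{j=0}^{n+1}\frac{k^j}{j!}\left\{\int_{\mc{X}/M}\Big[Td(T_{\mc{X}/M})\,e^{c_1(K_{\mc{X}/M})}\Big]_{[2n+2-2j]}\wedge\Phi^{\,j}\right\},
\end{align*}
which, unlike the left side of (\ref{0.7}), is an \emph{exact} polynomial in $k$ of degree $n+1$ (fibre integration kills form-degrees $>2n$ and forces $j+m=n+1$ for the $[2m]$--component). So Theorem~\ref{main theorem 2} is really the assertion that the coefficients of $k^{n+1}$, $k^{n}$ and $k^{n-1}$ of this polynomial equal the respective fibre integrals in (\ref{0.7}), the remainder $o(k^{n-1})$ being vacuous here.

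Expanding the universal series, and using $c_1(T_{\mc{X}/M})=-c_1(K_{\mc{X}/M})$,
\begin{align*}
Td(T_{\mc{X}/M})\,e^{c_1(K_{\mc{X}/M})}=1+\tfrac12 c_1(K_{\mc{X}/M})+\tfrac1{12}\Big(c_1(K_{\mc{X}/M})^2+c_2(T_{\mc{X}/M})\Big)+\cdots,
\end{align*}
so the three coefficients to compute are $\tfrac1{(n+1)!}\int_{\mc{X}/M}\Phi^{n+1}$, $\tfrac1{2\,n!}\int_{\mc{X}/M}c_1(K_{\mc{X}/M})\wedge\Phi^{n}$ and $\tfrac1{12\,(n-1)!}\int_{\mc{X}/M}\big(c_1(K_{\mc{X}/M})^2+c_2(T_{\mc{X}/M})\big)\wedge\Phi^{n-1}$. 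The first two are the Ma--Zhang expansion \cite{Ma}: restricting to $\mc{X}_y$ turns $\Phi|_{\mc{X}_y}$ into a multiple of $\omega$ (accounting for the powers of $2\pi$) and $c_1(K_{\mc{X}/M})|_{\mc{X}_y}$ into $-Ric$, hence $\rho$, while the mixed components of $\Phi$ produce $c(\phi)(\zeta,\b{\zeta})$ and $|\mu|^2$, reproducing the first two lines of (\ref{0.7}). The genuinely new point is the $k^{n-1}$ coefficient.

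For that I would fix $y\in M$ and $\zeta\in T_yM$ and work in Berndtsson's adapted holomorphic coordinates, in which the horizontal lift of $\zeta$, the fibre metric $\omega=\sqrt{-1}\p\b{\p}\phi|_{\mc{X}_y}$ and the Chern connection of $(T_{\mc{X}/M},\det\phi)$ all take standard form along $\mc{X}_y$. Decomposing the forms into vertical, mixed and horizontal parts: the horizontal part of $\Phi$ gives $(-\sqrt{-1})c(\phi)(\zeta,\b{\zeta})$, and its wedge with the \emph{fibrewise} part of $\tfrac1{12}(c_1(K_{\mc{X}/M})^2+c_2(T_{\mc{X}/M}))$ together with $\Phi_v^{\,n-2}$ produces the bracket $-\tfrac16\Delta\rho+\tfrac1{24}(|R|^2-4|Ric|^2+3\rho^2)$ --- the classical degree--$4$ term of the local Dolbeault index density on a K\"ahler $n$--fold --- integrated against $\omega^n/n!$; the mixed and horizontal components of $c_1(K_{\mc{X}/M})$ are the base derivatives of $\log\det(\p\b{\p}\phi|_{\mc{X}_y})$, which by the variation-of-determinant formula (in the spirit of Schumacher) are, modulo fibrewise-exact terms, quadratic in the Kodaira--Spencer form $\mu$; integrating these and the mixed components of $c_2(T_{\mc{X}/M})$ by parts over the compact fibre $\mc{X}_y$ and using the Bochner--Kodaira identity for $\mu$ turns them into $\tfrac1{12}\|\mu\|^2_{Ric}+\tfrac1{12}\|\n'\mu\|^2-\tfrac14\|\b{\p}^*\mu\|^2$ together with the term $-\tfrac\rho4|\mu|^2$. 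Assembling all contributions should give exactly the last two lines of (\ref{0.7}).

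The hardest part is this last bookkeeping: decomposing $R^{T_{\mc{X}/M}}$ and $R^{T_{\mc{X}}}$ into their vertical/mixed/horizontal parts, identifying the mixed parts with $\n'\mu$, $\b{\p}\mu$ and $\b{\p}^*\mu$, keeping track of which Bochner--Kodaira manipulations are valid only up to fibrewise-exact forms, and pinning down the constants $\tfrac16,\tfrac1{24},\tfrac1{12},\tfrac14$ of (\ref{0.7}). Once the coefficients are matched, Theorem~\ref{main theorem 2} follows; and combining it with Theorem~\ref{main theorem 1} through (\ref{0.8})--(\ref{0.9}), which together say that $-c_1(\lambda,\|\bullet\|_Q)$ and $-c_1(E^k,\|\bullet\|_k)$ differ exactly by a fixed multiple of $\p\b{\p}\log(\tau_k(\b{\p}))^2$, immediately yields the announced estimate $\p\b{\p}\log(\tau_k(\b{\p}))^2=o(k^{n-1})$.
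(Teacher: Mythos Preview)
Your strategy is the paper's: expand the Bismut--Gillet--Soul\'e integrand through $Td(T_{\mc{X}/M})\,e^{c_1(K_{\mc{X}/M})}$, split $R^{T_{\mc{X}/M}}$ and $R^{K_{\mc{X}/M}}$ into horizontal, mixed and vertical pieces relative to (\ref{horizontal}), identify the mixed blocks with $\n'\mu$ and $\b\p^*\mu$ (cf.\ (\ref{2.40}), (\ref{2.52})), and reduce the horizontal blocks by Stokes on the fibre (cf.\ (\ref{le1}), (\ref{le2}), (\ref{2.55})). The paper carries this out verbatim in Section~\ref{Asy2}.

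There is one concrete misattribution in your outline that would trip you up when you do the bookkeeping. The purely \emph{fibrewise} piece of $\tfrac1{12}\big(c_1(K_{\mc{X}/M})^2+c_2(T_{\mc{X}/M})\big)$, wedged with $c(\phi)\wedge\Phi_v^{\,n-2}$, produces only the polynomial part $\tfrac1{24}(|R|^2-4|Ric|^2+3\rho^2)$ of the bracket; it cannot produce $-\tfrac16\Delta\rho$, since a genuine characteristic density has no such divergence term (you may be conflating it with the Bergman coefficient $A_2$ in (\ref{bergman1}), which differs from the index density precisely by this exact piece). In the paper the $\Delta\rho$ arises instead from the \emph{horizontal}$\times$\emph{vertical} contributions: the horizontal block $(\p\b\p\log\det\phi)(\frac{\delta}{\delta z^\alpha},\frac{\delta}{\delta\b z^\beta})$ of $R^{K_{\mc{X}/M}}$ equals $\Delta c(\phi)_{\alpha\b\beta}+(\mu_\alpha)^i_{\b j}\o{(\mu_\beta)^j_{\b i}}$ by (\ref{2.54}), and the horizontal block $R^i_{j\alpha\b\beta}$ inside $tr(R\wedge R)$ pairs with the fibrewise Ricci to give $-\int\Delta\rho\,c(\phi)-\|\mu\|^2_{Ric}$ via (\ref{le2}); the two together combine as $(-\tfrac14+\tfrac1{12})\Delta\rho=-\tfrac16\Delta\rho$. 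So the ``mixed and horizontal'' case in your scheme carries not only the $\mu$--norms and $-\tfrac{\rho}{4}|\mu|^2$ but also the $\Delta\rho$, and you should expect two separate Stokes steps (one for $c_1(K)^2$, one for $c_2$) rather than a single Bochner--Kodaira manipulation.
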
}

As application we shall find the asymptotics of the variation of the analytic torsion.
  From (\ref{metric consist}) we have
\begin{align}\label{0.10}
\det \|\bullet\|_k^2=((|\bullet|^b)^2)^*\end{align}
for $b>0$ a sufficiently small constant, where $\det \|\bullet\|_k$ denotes  the natural induced $L^2$-metric on line bundle $\det E^k$ and $((|\bullet|^b)^2)^*$ denotes the dual metric of $(|\bullet|^b)^2$. Using (\ref{0.9}) and (\ref{0.10}) we have furthermore
\begin{align}
\frac{\sqrt{-1}}{2\pi}\p\b{\p}\log (\tau_k(\b{\p}))^2
=-c_1(\lambda,\|\bullet\|_Q)-c_1(E^k, \|\bullet\|_k).
\end{align}

As an immediate consequence
of  Theorem \ref{main theorem 1} and Theorem \ref{main theorem 2}
we have

\begin{cor}\label{cor1}
 As $k\to \infty$, we have
	 \begin{align}\label{second variation}
 \p\b{\p}\log(\tau_k(\b{\p}))^2=o(k^{n-1}).
 \end{align} 
\end{cor}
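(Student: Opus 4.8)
The corollary is essentially a formal consequence of the two main expansion theorems, so the plan is to assemble the identities already collected in the introduction. I would start from the relation
\begin{align*}
\frac{\sqrt{-1}}{2\pi}\p\b{\p}\log (\tau_k(\b{\p}))^2
=-c_1(\lambda,\|\bullet\|_Q)-c_1(E^k, \|\bullet\|_k),
\end{align*}
which follows by combining the Quillen-metric decomposition (\ref{0.9}), the comparison $\det\|\bullet\|_k^2=((|\bullet|^b)^2)^*$ of (\ref{0.10}), and the isomorphism $\lambda\cong(\det E^k)^{-1}$ from (\ref{0.8}). Evaluating the $(1,1)$-form on a pair $(\zeta,\b\zeta)$ with $\zeta\in T_yM$ turns the right-hand side into $-\sqrt{-1}c_1(\lambda,\|\bullet\|_Q)(\zeta,\b\zeta)-\sqrt{-1}c_1(E^k,\|\bullet\|_k)(\zeta,\b\zeta)$, up to the obvious sign bookkeeping.

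The next step is to insert the asymptotic expansions. By Theorem \ref{main theorem 1}, the quantity $-\sqrt{-1}c_1(E^k,\|\bullet\|_k)(\zeta,\b\zeta)$ equals the explicit sum of $k^{n+1}$, $k^n$ and $k^{n-1}$ terms displayed in (\ref{0.7}), with remainder $o(k^{n-1})$; by Theorem \ref{main theorem 2}, the quantity $-\sqrt{-1}c_1(\lambda,\|\bullet\|_Q)(\zeta,\b\zeta)$ has \emph{the same} expansion through order $k^{n-1}$. Therefore, in the difference
\begin{align*}
-\sqrt{-1}c_1(\lambda,\|\bullet\|_Q)(\zeta,\b\zeta)-\bigl(-\sqrt{-1}c_1(E^k,\|\bullet\|_k)(\zeta,\b\zeta)\bigr),
\end{align*}
the $k^{n+1}$, $k^n$ and $k^{n-1}$ terms cancel termwise, leaving only $o(k^{n-1})$. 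Since the sign/orientation factor relating $\frac{\sqrt{-1}}{2\pi}\p\b\p\log(\tau_k(\b\p))^2$ evaluated on $(\zeta,\b\zeta)$ to this difference is independent of $k$, one concludes $\p\b{\p}\log(\tau_k(\b{\p}))^2=o(k^{n-1})$, which is (\ref{second variation}).

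I expect no genuine obstacle here: the content is entirely in Theorems \ref{main theorem 1} and \ref{main theorem 2}, and the corollary only records that their leading parts through $k^{n-1}$ agree and hence cancel. The one point deserving a sentence of care is making the statement $o(k^{n-1})$ precise as an estimate on the $(1,1)$-form itself — i.e.\ uniformity in $y$ and in $\zeta$ on compact subsets of $M$ — which should be read off from the uniformity already built into the remainder estimates of the two theorems (the expansions are local on $M$ with coefficients given by fiber integrals of smooth data, so the $o(k^{n-1})$ bounds are locally uniform). Beyond that, the proof is just substitution and cancellation.
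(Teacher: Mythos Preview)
Your proposal is correct and matches the paper's proof exactly: one substitutes the expansions of Theorems \ref{main theorem 1} and \ref{main theorem 2} into the identity $\frac{\sqrt{-1}}{2\pi}\p\b\p\log(\tau_k(\b\p))^2=-c_1(\lambda,\|\bullet\|_Q)-c_1(E^k,\|\bullet\|_k)$ and observes that all terms through order $k^{n-1}$ cancel. The only place where the sign bookkeeping you flag needs tidying is that Theorem \ref{main theorem 2} gives the expansion of $-c_1(\lambda,\|\bullet\|_Q)$, so it is $\sqrt{-1}c_1(\lambda,\|\bullet\|_Q)(\zeta,\b\zeta)=-\sqrt{-1}\bigl(-c_1(\lambda,\|\bullet\|_Q)\bigr)(\zeta,\b\zeta)$, not $-\sqrt{-1}c_1(\lambda,\|\bullet\|_Q)(\zeta,\b\zeta)$, that agrees with (\ref{0.7}); hence the cancellation occurs in the \emph{sum} $-c_1(\lambda)-c_1(E^k)$ (which is precisely the right-hand side of the torsion identity) rather than in the difference you wrote.
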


Here the asymptotic (\ref{second variation}) is understood
as  $(\p\b{\p}\log(\tau_k(\b{\p}))^2)(\zeta,\b{\zeta})=o(k^{n-1})$ for any vector $\zeta\in TM$. 
\begin{rem} The asymptotic of analytic torsion has been studied by \cite{Bismut0}, \cite{Berman}. It is proved in \cite[Theorem 8]{Bismut0} 
the coefficients of $k^n$, $k^n\log k$ are topological 
invariants. After a preliminary version of this paper was finished,
we were informed by Xiaonan Ma of the paper
 \cite[Theorem 1.1, 1.2]{Finski} by Finski
where the coefficients of $k^{n-1}, k^{n-1}\log k$ 
in the analytic torsion $\tau_k$ have also been computed, which implies then (\ref{second variation}). 
 However, our method here is completely different 
from the methods in \cite{Bismut0, Finski}. 
\end{rem}

\begin{rem}
For the case of $M$ is the Teichm\"uller space of compact Riemann surfaces of genus $g\geq 2$ and $L$ is the relative canonical line bundle, 
Corollary \ref{cor1} was proved in \cite{Zhang}; in this
case the analytic torsion 
 $\tau_k$ actually decays exponentially in $k$.
\end{rem}

We proceed to explain briefly our 
method for the expansion in  Theorem \ref{main theorem 1}.
By the formula (\ref{0.4}),
the first Chern form  $c_1(E^k, \|\bullet\|_k)$ 
is the trace
of an integral operator, and in the
paper \cite{Ma} X. Ma and W. Zhang 
found the expansion of the diagonal of the 
  kernel  of the operators, proving a local index formula. To find the  third order term, i.e. the coefficient of $k^{n-1}$, 
seems a difficult task and requires much more effort. 
The trace
of first summand in (\ref{0.4}) is relatively easy to handle, 
however the second summand involves Toeplitz operators
with symbols being differential operators. A major ingredient of our method
 is the following expansion (see Lemma \ref{expanding}),
\begin{align}\label{0.3}
\begin{split}
(\Delta'+k)^{-1}&=\frac{1}{2k}+\frac{1}{6k^2}(k-\Delta'-R^*)+\frac{1}{4k^2}R^*+\frac{1}{4k^2}(\Delta'+k)^{-1}(k-\Delta')R^*\\
&\quad+\frac{1}{18k^3}(2k-\Delta')(k-\Delta'-R^*)+\frac{1}{36k^4}(2k-\Delta')^2(k-\Delta'-R^*)\\
&\quad+\frac{1}{36k^4}(\Delta'+k)^{-1}(k-\Delta')(2k-\Delta')^2(k-\Delta'-R^*),
\end{split}
\end{align}
where $R^*=R^{\b{t}j}_{i\b{l}}i_{\frac{\p}{\p v^j}}dv^i\wedge d\b{v}^l\wedge i_{\frac{\p}{\p\b{v}^t}}$, $ R^{\b{t}j}_{i\b{l}}=-\p_{i}\p_{\b{l}}\phi^{\b{t}j}+\phi^{\b{t}s}_{\b{l}}\phi^{\b{k}j}_i\phi_{s\b{k}}$ is the Chern curvature component of $(T^*_X,(\phi^{i\b{j}}))$.
 The corresponding contribution of each term above
to the $L^2$-curvature  $c_1(E^k, \|\bullet\|_k)$ will be effectively treated by using
further the asymptotic expansion of Bergman Kernel for bundles \cite[Theorem 4.2]{Wang}.

This article is organized as follows. In Section \ref{sec1}, we fix notation
and recall some basic facts  on  Berndtsson's  curvature formula of $L^2$-metric, the asymptotic expansion of Bergman kernel for bundles, analytic torsion and Quillen metric. In Section \ref{sec2}, we  find
 the expansion of $c_1(E^k, \|\bullet\|_k)$ and prove Theorem \ref{main theorem 1}. We also give the expansion of $-c_1(\lambda, \|\bullet\|_Q)$ and prove Theorem \ref{main theorem 2}. By comparing with their expansions, we prove Corollary \ref{cor1}.

\vspace{3mm} 
{\bf Acknowledgements.}
We would like to thank Professor Bo Berndtsson for
 patiently answering us several
questions and for inspiring discussions, and Professor Xiaonan Ma for 
drawing our attention to  the  preprint \cite{Finski}.
We are also grateful to Siarhei Finski
 who explained us carefully his results in 
a written communication.

\section{Preliminaries}\label{sec1}

We shall fix notation and recall some necessary background material.

\subsection{Berndtsson's curvature formula of $L^2$-metric}

We refer \cite{Bern2, Bern3, Bern4} and references therein.

 Let $\pi:\mc{X}\to M$ be a holomorphic fibration with compact fibres and $L$ a relative ample line bundle over $\mc{X}$. We denote by
$(z;v)=(z^1,\cdots, z^m; v^1,\cdots, v^n)$ a local admissible holomorphic coordinate system of $\mc{X}$ with $\pi(z;v)=z$, where $m=\dim_{\mb{C}}M$, $n=\dim_{\mb{C}}\mc{X}-\dim_{\mb{C}}M$.

For any smooth function $\phi$ on $\mc{X}$, we denote
$$\phi_{\alpha}:=\frac{\p \phi}{\p z^{\alpha}},\quad \phi_{\b{\beta}}:=\frac{\p \phi}{\p \b{z}^{\beta}},
\quad \phi_{i}:=\frac{\p \phi}{\p v^i},\quad \phi_{\b{j}}:=\frac{\p \phi}{\p \b{v}^j},$$
where $1\leq i,j\leq n, 1\leq \alpha,\beta\leq m$.

Let $F^+(L)$ be the space of smooth metrics $\phi$ on $L$ with
\begin{align*}
(\sqrt{-1}\p\b{\p}\phi)|_{\mc{X}_y}>0	
\end{align*}
for any point $y\in M$. For any $\phi\in F^+(L)$, set
\begin{align}\label{horizontal}
  \frac{\delta}{\delta z^{\alpha}}:=\frac{\p}{\p z^{\alpha}}-\phi_{\alpha\b{j}}\phi^{\b{j}k}\frac{\p}{\p v^k}.
\end{align}
By a routine computation, one can show that $\{\frac{\delta}{\delta z^{\alpha}}\}_{1\leq \alpha\leq m}$ spans a well-defined horizontal subbundle of $T\mc{X}$. 

 Let $\{dz^{\alpha};\delta v^k\}$
denote the dual frame of $\left\{\frac{\delta}{\delta z^{\alpha}}; \frac{\p}{\p v^i}\right\}$. Then
$$\delta v^k=dv^k+\phi^{k\b{l}}\phi_{\b{l}\alpha}dz^{\alpha}.$$
Moreover, the differential operators
\begin{align}\label{HV}
\p^V=\frac{\p}{\p v^i}\otimes \delta v^i,\quad \p^H=\frac{\delta}{\delta z^{\alpha}}\otimes dz^{\alpha}.
\end{align}
are well-defined.

For any $\phi\in F^+(L)$, the geodesic curvature $c(\phi)$ is defined by
\begin{align*}
  c(\phi)=\left(\phi_{\alpha\b{\beta}}-\phi_{\alpha\b{j}}\phi^{i\b{j}}\phi_{i\b{\beta}}\right)\sqrt{-1} dz^{\alpha}\wedge d\b{z}^{\beta},
\end{align*}
which is a horizontal real $(1,1)$-form on $\mc X$. 
The following lemma
confirms that the geodesic curvature $c(\phi)$ of $\phi$ is indeed well-defined.
\begin{lemma}[\cite{Feng}]\label{lemma0} The following decomposition holds,
  \begin{align*}
    \sqrt{-1}\p\b{\p}\phi=c(\phi)+\sqrt{-1}\phi_{i\b{j}}\delta v^i\wedge \delta \b{v}^j.
  \end{align*}
\end{lemma}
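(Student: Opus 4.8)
The plan is to verify this purely local identity by a direct change of coframe in an admissible coordinate system $(z^\alpha;v^i)$. I would first expand $\sqrt{-1}\p\b{\p}\phi$ in the coordinate coframe,
\[
\sqrt{-1}\p\b{\p}\phi=\sqrt{-1}\left(\phi_{\alpha\b{\beta}}\,dz^\alpha\wedge d\b{z}^\beta+\phi_{\alpha\b{j}}\,dz^\alpha\wedge d\b{v}^j+\phi_{i\b{\beta}}\,dv^i\wedge d\b{z}^\beta+\phi_{i\b{j}}\,dv^i\wedge d\b{v}^j\right),
\]
and then substitute the inverted relation $dv^k=\delta v^k-\phi^{k\b{l}}\phi_{\b{l}\alpha}dz^\alpha$, read off from $\delta v^k=dv^k+\phi^{k\b{l}}\phi_{\b{l}\alpha}dz^\alpha$, together with its complex conjugate $d\b{v}^k=\delta\b{v}^k-\phi^{l\b{k}}\phi_{l\b{\alpha}}d\b{z}^\alpha$ (using that $(\phi_{i\b{j}})$, hence its inverse, is Hermitian).

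After expanding I would sort the terms by type into the blocks $dz\wedge d\b{z}$, $dz\wedge\delta\b{v}$, $\delta v\wedge d\b{z}$ and $\delta v\wedge\delta\b{v}$. The block of type $\delta v^i\wedge\delta\b{v}^j$ receives a contribution only from the last summand, with coefficient $\sqrt{-1}\phi_{i\b{j}}$; this is precisely the vertical term in the statement. For the block of type $\delta v^i\wedge d\b{z}^\beta$, the only contributions are the $\phi_{i\b{\beta}}$ coming from $\phi_{i\b{\beta}}\,dv^i\wedge d\b{z}^\beta$ and the $-\phi_{i\b{j}}\phi^{l\b{j}}\phi_{l\b{\beta}}$ coming from the $d\b{z}$-part of $d\b{v}^j$ inside $\phi_{i\b{j}}\,dv^i\wedge d\b{v}^j$; these cancel because $\phi_{i\b{j}}\phi^{l\b{j}}=\delta_i^l$, and the block of type $dz^\alpha\wedge\delta\b{v}^j$ vanishes by the conjugate reasoning. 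Finally the coefficient of $dz^\alpha\wedge d\b{z}^\beta$ collects $\phi_{\alpha\b{\beta}}$ from the first summand together with three further terms each equal to $\pm\phi_{\alpha\b{j}}\phi^{i\b{j}}\phi_{i\b{\beta}}$: two with a minus sign (from the $d\b{z}$-part of $d\b{v}^j$ in the second summand and the $dz$-part of $dv^i$ in the third summand) and one with a plus sign (from the $dz\wedge d\b{z}$-part of the fourth summand), adding up to $\phi_{\alpha\b{\beta}}-\phi_{\alpha\b{j}}\phi^{i\b{j}}\phi_{i\b{\beta}}$; multiplied by $\sqrt{-1}$ this is exactly $c(\phi)$, which yields the decomposition.

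An equivalent and arguably cleaner route, which I would record as well, is to test the identity against the dual frame. Since $\{dz^\alpha,\delta v^i\}$ is dual to $\left\{\frac{\delta}{\delta z^\alpha},\frac{\p}{\p v^i}\right\}$, it suffices to evaluate both sides on the pairs $\left(\frac{\p}{\p v^i},\overline{\frac{\p}{\p v^j}}\right)$, $\left(\frac{\delta}{\delta z^\alpha},\overline{\frac{\p}{\p v^j}}\right)$ and $\left(\frac{\delta}{\delta z^\alpha},\overline{\frac{\delta}{\delta z^\beta}}\right)$. Using $\frac{\delta}{\delta z^\alpha}=\frac{\p}{\p z^\alpha}-\phi_{\alpha\b{k}}\phi^{\b{k}l}\frac{\p}{\p v^l}$, the first pairing gives $\phi_{i\b{j}}$ on both sides, the second gives $\phi_{\alpha\b{j}}-\phi_{\alpha\b{k}}\phi^{\b{k}l}\phi_{l\b{j}}=0$ (so $\sqrt{-1}\p\b{\p}\phi$ has no mixed-type component in the adapted coframe), and the third reproduces the coefficient $\phi_{\alpha\b{\beta}}-\phi_{\alpha\b{j}}\phi^{i\b{j}}\phi_{i\b{\beta}}$ of $c(\phi)$. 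I do not anticipate any genuine obstacle here — the statement is elementary linear algebra over the ring of smooth functions — and the only point requiring care is keeping the Hermitian conventions for $(\phi_{i\b{j}})$ and its inverse consistent throughout, so that the term $\phi_{\alpha\b{j}}\phi^{i\b{j}}\phi_{i\b{\beta}}$ comes out as the single correction term to $\phi_{\alpha\b{\beta}}$.
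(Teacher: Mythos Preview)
Your proposal is correct and is essentially the same direct computation as in the paper, just run in the opposite direction: the paper expands $c(\phi)+\sqrt{-1}\phi_{i\b{j}}\delta v^i\wedge\delta\b{v}^j$ via $\delta v^k=dv^k+\phi^{k\b{l}}\phi_{\b{l}\alpha}dz^\alpha$ and collapses it to $\sqrt{-1}\p\b{\p}\phi$, whereas you substitute the inverse relation into $\sqrt{-1}\p\b{\p}\phi$ and read off the horizontal and vertical blocks. Your alternative ``evaluate on the dual frame'' argument is also fine and amounts to the same cancellation $\phi_{\alpha\b{j}}-\phi_{\alpha\b{k}}\phi^{\b{k}l}\phi_{l\b{j}}=0$.
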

\begin{proof}
This is a direct computation,
  \begin{align*}
    &c(\phi)+\sqrt{-1}\phi_{i\b{j}}\delta v^i\wedge \delta \b{v}^j=\sqrt{-1}(\phi_{\alpha\b{\beta}}-\phi_{\alpha\b{l}}\phi^{k\b{l}}\phi_{k\b{\beta}})dz^{\alpha}\wedge d\b{z}^{\beta}\\
    &\quad+\sqrt{-1}\phi_{i\b{j}}(dv^i+\phi^{i\b{l}}\phi_{\b{l}\alpha}dz^{\alpha})\wedge (d\b{v}^j+\phi^{\b{j}k}\phi_{k\b{\beta}}d\b{z}^{\beta})\\
    &=\sqrt{-1}(\phi_{\alpha\b{\beta}}dz^{\alpha}\wedge d\b{z}^{\beta}+\phi_{\alpha\b{j}}dz^{\alpha}\wedge d\b{v}^j+\phi_{i\b{\beta}}dv^{i}\wedge d\b{z}^{\beta}+\phi_{i\b{j}}dv^i\wedge d\b{v}^j)\\
    &=\sqrt{-1}\p\b{\p}\phi.
  \end{align*}
\end{proof}

Following Berndtsson (cf. \cite{Bern2, Bern3, Bern4}) we consider the direct image bundle $E:=\pi_*(K_{\mc{X}/M}+L)$, and define the following $L^2$-metric on
the direct image bundle $E:=\pi_*(K_{\mc{X}/M}+L)$: 
for $y\in M$, $\mc{X}_y=\pi^{-1}(y)$,  and  $u\in E_{y}\equiv H^0(\mc{X}_y, (L+K_{\mc{X}/M})_y)$, 
\begin{align}\label{L2 metric}
\|u\|^2:=\int_{\mc{X}_y}|u|^2e^{-\phi}. 	
\end{align}
Note that $u$ can be written locally as $u=u' dv\wedge e$, where $e$ is a local holomorphic frame for $L|_{\mc X}$, and so locally
$$|u|^2e^{-\phi}=(\sqrt{-1})^{n^2}|u'|^2 |e|^2dv\wedge d\b{v}=(\sqrt{-1})^{n^2}|u'|^2 e^{-\phi}dv\wedge d\b{v},$$
where $dv=dv^1\wedge \cdots\wedge dv^n$ is the fiber volume.

By the definition of $\b{\p}^V$, we have
\begin{align*}
\mu_{\alpha}=\b{\p}^V(\frac{\delta}{\delta z^{\alpha}})=-\frac{\p}{\p \b{v}^l}\left(\phi_{\alpha\b{j}}\phi^{\b{j}i}\right)d\b{v}^l\otimes \frac{\p}{\p v^i}, 	
\end{align*}
which is in the Kodira-Spencer class $\rho(\frac{\p}{\p z^{\alpha}}|_y)\in H^1(\mc{X}_y, T_{\mc{X}_y})$. 

 The following theorem was proved by Berndtsson in \cite[Theorem 1.2]{Bern4}, its proof also can be found in \cite[Theorem 3.1]{Feng}.

\begin{thm}[\cite{Bern4}]\label{BF1}
\label{thm4} For any $y\in M$ 
the curvature 
 $\langle \Theta^{E}(u, u)\rangle$, $ u\in E_{y}$, 
  of the Chern connection on $E$ with the $L^2$ metric 
is given by
\begin{align}\label{cur}
\langle \sqrt{-1}\Theta^{E}u,u\rangle=\int_{\mc{X}_y}c(\phi)|u|^2e^{-\phi}+\langle(1+\Delta')^{-1}i_{\mu_{\alpha}}u,i_{\mu_{\beta}}u\rangle\sqrt{-1}dz^{\alpha}\wedge d\b{z}^{\beta}.
\end{align}
Here $\Delta'=\n'\n'^*+\n'^*\n$ is the Laplacian on $L|_{\mc{X}_y}$-valued forms on $\mc{X}_y$ defined by the $(1,0)$-part of the Chern connection on $L|_{\mc{X}_y}$.
\end{thm}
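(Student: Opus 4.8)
The plan is to reprove Berndtsson's formula by computing the Chern curvature of $(E,\|\bullet\|)$ as a second‐order variation of the $L^2$‐norm of a local holomorphic section, following Berndtsson and Feng. Fix $y\in M$, holomorphic coordinates $z$ on $M$ centered at $y$, and a local holomorphic section $u$ of $E$. Since $u$ is a holomorphic section of a Hermitian holomorphic bundle, an elementary computation with the Chern connection gives the identity
$$\langle\Theta^E u,u\rangle=-\p\b\p\|u\|^2+\sum_{\alpha,\beta}\langle\nabla^{1,0}_\alpha u,\nabla^{1,0}_\beta u\rangle\,dz^\alpha\wedge d\b z^\beta ,$$
where $\nabla^{1,0}_\alpha u\in E_y$ is determined by $\langle\nabla^{1,0}_\alpha u,w\rangle=\p_\alpha\langle u,w\rangle$ for all local holomorphic sections $w$. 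Thus it suffices to compute $\p\b\p\|u\|^2$ and to identify $\nabla^{1,0}u$. I would represent $u$ over $\pi^{-1}(U)$ by a section of $K_{\mc X/M}\otimes L$, holomorphic along each fiber and restricting to the given section on $\mc X_y$, and extend it to a form on the total space using the horizontal distribution $\{\delta/\delta z^\alpha\}$ of (\ref{horizontal}), i.e. write $u=u'\,\delta v^1\wedge\cdots\wedge\delta v^n\otimes e$. This choice is the natural one because $\delta/\delta z^\alpha$ preserves the fibration while the failure of $\b\p$ to respect the horizontal–vertical splitting is measured exactly by the Kodaira–Spencer form $\mu_\alpha=\b\p^V(\delta/\delta z^\alpha)$.

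Writing $\langle u,w\rangle=\pi_*\big(\{u,w\}\,e^{-\phi}\big)$ and differentiating along $\delta/\delta z^\alpha$ — using that $d$ commutes with integration over the compact, boundaryless fibers — one finds that $\nabla^{1,0}_\alpha u$ is the fiberwise orthogonal projection onto the holomorphic sections of the covariant derivative $D_\alpha u$ of $u$ along $\delta/\delta z^\alpha$, and that $v_\alpha:=D_\alpha u-\nabla^{1,0}_\alpha u$ is the $L^2$‐minimal solution on the compact Kähler fiber $\mc X_y$ of $\b\p^V v_\alpha=\b\p^V(D_\alpha u)$; commuting $\b\p^V$ past $D_\alpha$ shows that this right‐hand side equals $i_{\mu_\alpha}u$. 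Hence $v_\alpha=(\b\p^V)^*(\Delta'')^{-1}(i_{\mu_\alpha}u)$, with $\Delta''$ the $\b\p^V$‐Laplacian on the fiber, so that $\langle v_\alpha,v_\beta\rangle=\langle(\Delta'')^{-1}i_{\mu_\alpha}u,\,i_{\mu_\beta}u\rangle$.

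For the other term, $\p\b\p\|u\|^2=\pi_*\big(\p\b\p(\{u,u\}e^{-\phi})\big)$, one computes $\p\b\p$ of the integrand on $\pi^{-1}(U)$ and discards the $d$‐exact pieces (which integrate to $0$ along the fibers); Lemma \ref{lemma0} enters crucially here, since decomposing $\sqrt{-1}\p\b\p\phi$ into its horizontal part $c(\phi)$ and its fiber part $\sqrt{-1}\phi_{i\b j}\delta v^i\wedge\delta\b v^j$ is precisely what separates the contribution $\int_{\mc X_y}c(\phi)|u|^2e^{-\phi}$ from the terms that are quadratic in $D_\alpha u$. Substituting this and the formulas of the previous paragraph into the curvature identity and collecting terms, the $D_\alpha u$‐dependent contributions combine and one arrives at
$$\langle\sqrt{-1}\Theta^E u,u\rangle=\int_{\mc X_y}c(\phi)|u|^2e^{-\phi}+\sqrt{-1}\sum_{\alpha,\beta}\langle(\Delta'')^{-1}i_{\mu_\alpha}u,\,i_{\mu_\beta}u\rangle\,dz^\alpha\wedge d\b z^\beta .$$
Finally, by Lemma \ref{lemma0} the curvature of $(L,e^{-\phi})|_{\mc X_y}$ equals the fiber Kähler form $\sqrt{-1}\phi_{i\b j}dv^i\wedge d\b v^j$, so the Bochner–Kodaira–Nakano identity on $L$‐valued forms of the relevant bidegree over the compact Kähler fiber yields $\Delta''=\Delta'+1$; therefore $(\Delta'')^{-1}=(1+\Delta')^{-1}$ and (\ref{cur}) follows.

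The main obstacle is the bookkeeping in the two differentiations of the fiber integral: choosing and extending the representative $u$, establishing the commutator identity $\b\p^V(D_\alpha u)=\pm i_{\mu_\alpha}u$ with the correct constant, tracking the $d$‐exact terms that drop out under $\pi_*$, and verifying that all the $D_\alpha u$‐terms cancel. By contrast, the reduction in the first paragraph and the Bochner–Kodaira–Nakano step are routine.
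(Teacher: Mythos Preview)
The paper does not prove this theorem itself: the result is quoted from Berndtsson \cite[Theorem~1.2]{Bern4}, with a second reference to \cite[Theorem~3.1]{Feng}, so there is no in-paper argument to compare against. Your outline follows the same strategy as those references---represent $u$ on the total space via the horizontal frame $\{\delta v^i\}$, differentiate the fibre integral twice, realize $\n^{1,0}_\alpha u$ as the holomorphic projection of a Lie-type derivative $D_\alpha u$, and express the defect through a Green's operator---so the overall plan is sound.

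There is, however, a genuine inconsistency in your sketch that is more than a sign or constant to be fixed. You assert $\b\p^V(D_\alpha u)=i_{\mu_\alpha}u$, but the two sides live in different bidegrees: $D_\alpha u$ is a section of $K_{\mc X_y}\otimes L$, so $\b\p^V(D_\alpha u)\in A^{n,1}(\mc X_y,L)$, whereas contracting $\mu_\alpha\in A^{0,1}(\mc X_y,T_{\mc X_y})$ against $u\in A^{n,0}(\mc X_y,L)$ produces $i_{\mu_\alpha}u\in A^{n-1,1}(\mc X_y,L)$ (see the explicit local formula in the proof of Lemma~\ref{lemma4}). This mismatch propagates to your Bochner--Kodaira--Nakano step: with $\sqrt{-1}\,\Theta^L=\omega$ one has $\Delta''-\Delta'=[\omega,\Lambda]=(p+q-n)\,\mathrm{id}$, which equals $1$ on $(n,1)$-forms but $0$ on $(n-1,1)$-forms where $i_{\mu_\alpha}u$ actually sits; so the ``$+1$'' in $(1+\Delta')^{-1}$ cannot be produced by BKN at bidegree $(n-1,1)$. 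In the actual derivations the passage between these two bidegrees goes through an additional identity---for example $i_{\mu_\alpha}u=\n'^{*}\big(\phi_{\alpha\b j}\,d\b v^{j}\wedge u\big)$, combined with the fact that $\n'$ and $\n'^{*}$ intertwine $\Delta'$---and it is precisely here, not just in ``the correct constant,'' that the bookkeeping you flag has real content.
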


We replace now the Hermitian line bundle $(L,e^{-\phi})$ by $(L^k,e^{-k\phi})$, 
and consider the corresponding direct image bundle $E^k:=\pi_*(L^k+K_{\mc{X}/M})$. Let $\n'^*_k$ (resp. $\n'^*$) be the adjoint operator of  $\n'$ with respect to $(L^k,e^{-k\phi})$ and $(X, k\omega=k\sqrt{-1}\p\b{\p}\phi)$ (resp. $(X, \omega=\sqrt{-1}\p\b{\p}\phi)$). We have
\begin{align}
\n'^*=\sqrt{-1}[\Lambda_{k\omega},\n']=\frac{1}{k}\sqrt{-1}[\Lambda_{\omega},\n']=\frac{1}{k}\n'^*.	
\end{align}
It implies that 
\begin{align}\label{delt1}
\Delta'_k=\n'^*_k\n'+\n'\n'^*_k=\frac{1}{k}\Delta'.	
\end{align}

 From Theorem \ref{BF1} and (\ref{delt1}), the curvature of $L^2$-metric (see (\ref{L2 metric})) on $E^k$ is given by 
\begin{align}\label{L2 curvature}
\begin{split}
\langle \sqrt{-1}\Theta^{E^k}u,u\rangle &=\int_{\mc{X}_y}c(k\phi)|u|^2e^{-k\phi}+\langle (1+\Delta'_k)^{-1}i_{\mu_{\alpha}}u,i_{\mu_{\beta}}u\rangle_{k\omega}\sqrt{-1}dz^{\alpha}\wedge d\b{z}^{\beta}\\
&=\int_{\mc{X}_y}kc(\phi)|u|^2e^{-k\phi}+k\langle (k+\Delta')^{-1}i_{\mu_{\alpha}}u,i_{\mu_{\beta}}u\rangle\sqrt{-1}dz^{\alpha}\wedge d\b{z}^{\beta}
\end{split}
\end{align}
for any element $u$ of $E^k_y$.

\subsection{The Bergman kernels} 
Let $(X,\omega)$ be a compact K\"ahler manifold of $n$-dimension, $(L,e^{-\phi})$ be a Hermitian line bundle over $X$ satisfying 
\begin{align}
	\sqrt{-1}R^L=\sqrt{-1}\p\b{\p}\phi=\omega. 
\end{align}

Let $(E,H)$ be a Hermitian vector bundle over $X$. 
There is a natural $L^2$-metric on the space $H^0(X, L^k+E)$ 
of holomorphic forms of $L^k +E$,  $k\ge 0$. Let $\{u_j\}_{j=1}^{N_k}$ be an orthonormal basis of $H^0(X, L^k+E)$, where $d_k=\dim H^0(X, L^k+E)$. The $k$-th Bergman kernel $B_k(H)\in \text{End}(E)$ is defined by 
\begin{align}
B_k(H)=\sum_{j=1}^{d_k}u_j^*\otimes u_j.
\end{align}

Recall the following Tian-Yau-Zelditch expansion of Bergman kernel for bundles.
We use the version in  \cite{Catlin} and
refer \cite{Berman1, Dai, 
Lu, 
Tian, Wang, Xu,
 Zelditch} for different variations and proofs.

\begin{thm}\label{cat}
For a fixed metric $H$, there is an asymptotic expansion as $k\to \infty$,
$$B_k(H)=\frac{1}{(2\pi)^n}(A_0 k^n+A_1k^{n-1}+\cdots),$$
where $A_i\in\text{End}(E)$ are determined by the geometry of $\omega$ and $H$. The expansion is in the sense that for any integer $l,R\geq 0$,
$$\|(2\pi)^nB_k(H)-\sum_{j<R}A_j k^{n-j}\|_{C^l}\leq C_{l,R,H}k^{n-R},$$
where the norm is computed in the space $C^l(X, \text{End}(E))$
of $\text{End}(E)$-valued sections and 
 $C_{l,R,H}$ depends on $l,R,\omega$ and $H$. 	
\end{thm}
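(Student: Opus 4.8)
The plan is to establish Theorem \ref{cat} by the analytic localization and rescaling method, in the twisted form needed for $L^k\otimes E$. First I would reduce the assertion to a local statement on the diagonal. Let $\Box_k=\bar\partial^*\bar\partial+\bar\partial\bar\partial^*$ be the Kodaira Laplacian on $A^{0,\bullet}(X,L^k\otimes E)$. The Bochner--Kodaira--Nakano identity together with $\sqrt{-1}R^L=\omega>0$ gives, on $(0,q)$-forms with $q\ge 1$, a lower bound $\Box_k\ge (c_1k-c_2)\,\mathrm{Id}$ with $c_1>0$; hence for $k$ large the higher cohomology vanishes and the Bergman projection $P_k$ onto $H^0(X,L^k\otimes E)$ is exactly the spectral projection $\mathbf 1_{[0,\,c_1k/2]}(\Box_k)$. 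Finite propagation speed for $\cos(t\sqrt{\Box_k})$ (or Agmon-type estimates) then yields off-diagonal decay $|P_k(x,y)|\le Ck^{N}e^{-c\sqrt{k}\,d(x,y)}$, so that $B_k(x_0,x_0)=P_k(x_0,x_0)$ depends, up to an $O(k^{-\infty})$ error and uniformly in $x_0$, only on the geometry in a fixed small ball around $x_0$.

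Second, I would rescale near $x_0$. Choose K\"ahler normal coordinates $z$ centered at $x_0$ and normal trivializations of $L$ and $E$, so that the weight of $L^k$ is $k\phi$ with $\phi(z)=|z|^2+O(|z|^4)$ and the curvature data of $E$ and $TX$ are standard to high order at $0$. Substituting $z=u/\sqrt k$ and conjugating by the resulting dilation, $\Box_k$ becomes $\mathcal L_k=\mathcal L_0+\sum_{r\ge1}k^{-r/2}\mathcal O_r$, where the $\mathcal O_r$ are second-order operators on $\mathbb C^n$ with polynomial coefficients built from the Taylor expansions of $\omega$, $R^L$, $R^E$ and $R^{TX}$ at $x_0$, and $\mathcal L_0$ is the model operator --- essentially the complex harmonic oscillator --- whose $L^2$-null space is a Fock space and whose orthogonal projection $\mathcal P_0$ onto it has the explicit Bargmann--Fock kernel $\mathcal P_0(u,u')=\exp\!\big(-\tfrac12(|u|^2+|u'|^2)+u\cdot\bar u'\big)\otimes\mathrm{Id}_E$ in a suitable normalization. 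On the diagonal this gives $A_0=\mathrm{Id}_E$, consistent with $\dim H^0\sim \tfrac{k^n}{(2\pi)^n}\,\mathrm{rk}(E)\int_X\omega^n/n!$.

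Third, I would expand the projection perturbatively. Writing $\mathcal P_k=\tfrac1{2\pi i}\oint_\Gamma(\lambda-\mathcal L_k)^{-1}\,d\lambda$ for a fixed small contour $\Gamma$ separating $0$ from the rest of $\mathrm{spec}(\mathcal L_0)$ and inserting $\mathcal L_k=\mathcal L_0+\sum k^{-r/2}\mathcal O_r$, one gets $\mathcal P_k=\sum_{r\ge0}k^{-r/2}\mathcal P_r$, where each $\mathcal P_r$ is a finite sum of terms $\tfrac1{2\pi i}\oint_\Gamma(\lambda-\mathcal L_0)^{-1}\mathcal O_{r_1}(\lambda-\mathcal L_0)^{-1}\cdots\mathcal O_{r_s}(\lambda-\mathcal L_0)^{-1}\,d\lambda$; by residue calculus each such term is a polynomial-coefficient operator applied to $\mathcal P_0$, hence computable in closed form. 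A parity argument --- $\mathcal P_0$ is even under $(u,u')\mapsto(-u,-u')$, and in K\"ahler normal coordinates the $\mathcal O_r$ alternate parity with $r$ --- forces $\mathcal P_1,\mathcal P_3,\dots$ to vanish on the diagonal, so $B_k(x_0,x_0)=\tfrac{k^n}{(2\pi)^n}\big(A_0+A_1k^{-1}+A_2k^{-2}+\cdots\big)$ with $A_j=\mathcal P_{2j}(0,0)$, a universal polynomial in the curvatures and their covariant derivatives at $x_0$.

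Finally, the $C^l$-estimate $\|(2\pi)^nB_k-\sum_{j<R}A_jk^{n-j}\|_{C^l}\le C_{l,R,H}\,k^{n-R}$ follows by combining the off-diagonal decay from the first step with elliptic estimates for $\mathcal L_k$ in the rescaled weighted Sobolev norms that are uniform in $k$ and in the base point, and by differentiating the contour-integral identity in $x_0$ to propagate these estimates to $C^l$. The main obstacle I expect is precisely this last point: proving resolvent bounds for $(\lambda-\mathcal L_k)^{-1}$ uniform in $k$ in the rescaled norms, carefully tracking the mild $k$-dependence of the domains and of the spectral gap, and then controlling the $x_0$-derivatives of all the error terms. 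By contrast the localization, the identification of the model operator with its Fock kernel, and the algebra that produces the coefficients $A_j$ are comparatively mechanical.
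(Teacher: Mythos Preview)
Your proposal is a reasonable sketch of the analytic localization and rescaling proof in the style of Dai--Liu--Ma and Ma--Marinescu, and the outline is essentially correct. However, there is nothing to compare against here: the paper does \emph{not} prove Theorem~\ref{cat}. It is quoted as a known result from the literature (``Recall the following Tian--Yau--Zelditch expansion of Bergman kernel for bundles. We use the version in \cite{Catlin} and refer \cite{Berman1, Dai, Lu, Tian, Wang, Xu, Zelditch} for different variations and proofs.''), and the paper only uses the expansion and the explicit coefficients $A_0,A_1,A_2$ from \cite{Wang} as input for its own computations.

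So your proof is not wrong, but it is not what the paper does; the paper simply cites the theorem. If the intent was to supply a proof where the paper gives none, your outline follows one of the cited approaches (closest to \cite{Dai, Ma1}); the peak-section method of \cite{Tian, Lu} and the $\bar\partial$-approach of \cite{Berman1} would be genuine alternatives.
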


The first three coefficients $A_0$, $A_1$ and $A_2$ have
been computed in \cite[Theorem 4.2]{Wang}. 
\begin{thm}[\cite{Wang}]
	\begin{itemize}
	\item[(0)] $A_0=Id$,
	\item[(1)]	$A_1=\sqrt{-1}\Lambda F_H+\frac{1}{2}\rho Id$,
	\item[(2)]$A_2=\frac{1}{3}\Delta\rho+\frac{1}{24}(|R|^2-4|Ric|^2+3\rho^2)
	+\frac{1}{2}(\Delta'' Ric^{E}+\rho Ric^{E}+Ric^{E}Ric^{E}-R^{E}R^{E}-\langle R^{E}, Ric\rangle$.
	\end{itemize}
	Here $R$, $Ric$ and $\rho$ represent the curvature tensor, the Ricci curvature and the scalar curvature of $\omega$, and $\Delta=\phi^{\b{j}i}\frac{\p^2}{\p v^i\p\b{v}^j}$, $Ric^E=\sqrt{-1}\Lambda F_H$ and $F_H$ represents the curvature of $(E,H)$, $\Delta''=\sqrt{-1}\Lambda\p\b{\p}$. 
\end{thm}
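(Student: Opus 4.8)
The plan is to reduce the statement to a pointwise computation at an arbitrary point $x_0\in X$ and then extract $A_0,A_1,A_2$ from a rescaled perturbation expansion around the Bargmann--Fock model. Since the existence and $C^l$-control of the expansion are already supplied by Theorem \ref{cat}, it suffices to identify the three leading terms of $B_k(H)(x_0)\in\text{End}(E_{x_0})$ for fixed $x_0$. First I would choose K\"ahler normal (Bochner) coordinates $(v^1,\dots,v^n)$ centred at $x_0$ for $\m$, together with adapted local holomorphic frames $e_L$ of $L$ and $e_E$ of $E$, normalised so that the weight $\phi$ of $L$, the Hermitian matrix $H$ of $E$, and the volume form $\m^n/n!$ all have explicit Taylor expansions at $x_0$ whose coefficients are polynomials in the curvature data. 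Concretely $\phi(v)=|v|^2-\tfrac14 R_{i\b jk\b l}v^i\b v^jv^k\b v^l+O(|v|^5)$, forced by $\sqrt{-1}\p\b\p\phi=\m$ in Bochner coordinates, while $H=I+(\text{quadratic in }v,\b v\text{ built from }F_H)+(\text{cubic from }\n F_H)+(\text{quartic from }\n^2F_H,\,F_H,\,R)+O(|v|^5)$. Every quantity in the statement---$\rho$, $R$, $Ric$, $F_H=Ric^E$, and the derivative $\Delta''Ric^E$---appears as one of these Taylor coefficients evaluated at $x_0$; in particular $A_2$ requires $\phi$ to sixth order (whose covariant derivatives of curvature furnish $\Delta\rho$) and $H$ to fourth order (whose contraction furnishes $\Delta''Ric^E$).

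The second step is localisation and rescaling. Replacing $v$ by $w/\sqrt k$, the weight of $L^k$ becomes $k\phi(w/\sqrt k)=|w|^2-\tfrac1{4k}R_{i\b jk\b l}w^i\b w^jw^k\b w^l+O(k^{-3/2})$ and $H(w/\sqrt k)=I+\tfrac1k(\text{quadratic in }w,\b w)+\cdots$. As $k\to\infty$ the weighted $L^2$-space of holomorphic sections of $L^k\otimes E$ concentrates near $x_0$ and is modelled on the Bargmann--Fock space $\mc H_0$ of holomorphic $f:\mb C^n\to E_{x_0}$ with Gaussian weight $e^{-|w|^2}$, whose Bergman projection $P_0$ onto holomorphic polynomials is explicit. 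I would then expand the true Bergman projection $P_k$ as a Neumann/perturbation series around $P_0$: writing the rescaled density as $e^{-|w|^2}(1+k^{-1/2}a_1+k^{-1}a_2+\cdots)$ with $a_j$ polynomial in $w,\b w$ and the curvature coefficients, one obtains $P_k=P_0+k^{-1/2}P_0a_1P_0+k^{-1}\big(P_0a_2P_0+P_0a_1\,R_0\,a_1P_0\big)+\cdots$, where $R_0=(1-P_0)L_0^{-1}(1-P_0)$ is the reduced resolvent of the model Kodaira Laplacian $L_0$ on the orthogonal complement of the polynomials. Evaluating on the diagonal and inserting the standard Gaussian moments $\int_{\mb C^n}w^p\b w^q\,e^{-|w|^2}$ converts the whole problem into finite combinatorics.

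Collecting powers of $k$ then yields the coefficients. The leading $k^n$ term is the diagonal value of the model kernel, which after the normalisation $(2\pi)^n$ of Theorem \ref{cat} gives $A_0=Id$. Parity under $w\mapsto-w$ (the cubic $\phi$- and $H$-corrections are odd) kills the coefficient of $k^{n-1/2}$, so the next genuine term is at $k^{n-1}$; there the only survivors are linear in curvature, the sextic-plus-volume contribution of $\phi$ producing $\tfrac12\rho\,Id$ as in the scalar case of Lu \cite{Lu}, and the quadratic term of $H$ producing $\sqrt{-1}\Lambda F_H=Ric^E$, so $A_1=\sqrt{-1}\Lambda F_H+\tfrac12\rho\,Id$. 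Half-integer powers vanish again by parity, leaving $A_2$ as the coefficient of $k^{n-2}$.

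I expect the computation of $A_2$ to be the crux, and essentially the whole content of the theorem. It mixes several independent sources: the sixth-order Taylor coefficient of $\phi$ (feeding $\Delta\rho$ and, together with the quartic term, $|R|^2$, $|Ric|^2$, $\rho^2$), the genuine second covariant derivative of the $E$-curvature entering as $\Delta''Ric^E$, the self-interaction $Ric^E\,Ric^E$, and the terms $R^ER^E$ and $\langle R^E,Ric\rangle$ arising as cross-terms between the $\phi$- and $H$-corrections and, crucially, from the second-order perturbation $P_0a_1R_0a_1P_0$ (which is even and hence does contribute at order $k^{-1}$ even though $a_1$ is odd). The difficulty is organisational but severe: one must track Gaussian moments up to degree six in $w$, respect the non-commutative ordering of the $\text{End}(E)$-valued factors $Ric^E,R^E$, and check that all coordinate-dependent intermediate expressions recombine into the stated invariant. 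I would manage this by grouping contributions by source---pure $\m$-curvature, pure $E$-curvature, and mixed---and validating each group against the scalar limit $E=\mb C$ and against the flat-metric limit as independent consistency checks.
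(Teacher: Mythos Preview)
The paper does not prove this theorem at all: it is quoted verbatim as \cite[Theorem 4.2]{Wang} and used as a black box to obtain the Bergman kernel expansion (\ref{Bergman expansion}) for the special case $E=K_X$. There is therefore no ``paper's own proof'' to compare your proposal against.

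Your outline is a reasonable sketch of one of the standard methods for computing these coefficients (the rescaling/Bargmann--Fock perturbation approach, in the spirit of Dai--Liu--Ma \cite{Dai} and Ma--Marinescu \cite{Ma1}). As a strategy it is sound: localisation, rescaling $v\mapsto w/\sqrt{k}$, expansion of the weighted density around the Gaussian model, and identification of $A_j$ as polynomials in curvature jets via Gaussian integrals. The parity argument killing half-integer powers is correct, and your identification of which Taylor orders feed which terms of $A_1,A_2$ is accurate. That said, what you have written is a plan rather than a proof: the actual bookkeeping for $A_2$---in particular the second-order resolvent contribution $P_0a_1R_0a_1P_0$ and the non-commutative ordering of the $\text{End}(E)$-valued pieces---is where all the work lies, and you have (appropriately) flagged this without carrying it out. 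If you intend to supply a full proof you should either follow Wang's thesis directly or the exposition in \cite{Ma1}, since the computation is lengthy and the literature already contains several independent verifications of these exact coefficients.
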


When  $E=K_X$ is the canonical bundle there is a natural metric $(\det(\phi_{i\b{j}}))^{-1}$ on $K_X$ induced from $(L,e^{-\phi})$. In this case, $Ric^E=-\rho$, $R^E=-Ric$, so 
\begin{align}\label{bergman1}
A_0=1,\quad A_1=-\frac{\rho}{2},\quad A_2=-\frac{1}{6}\Delta''\rho+\frac{1}{24}(|R|^2-4|Ric|^2+3\rho^2).
\end{align}
The Bergman kernel is
\begin{align}\label{bergman2}
B_k(H)=\sum_{j=1}^{d_k}u_j^*\otimes u_j=\sum_{j=1}^{d_k}|u_j|_{L^2}^2.
\end{align}

From (\ref{bergman1}), (\ref{bergman2}) and Theorem \ref{cat}, the asymptotic expansion of Bergman kernel for the bundle $L^k+K_X$ is 
\begin{align}\label{Bergman expansion}
\begin{split}
	\sum_{j=1}^{d_k}|u_j|_{L^2}^2
	=\frac{1}{(2\pi)^n}\left(k^n-\frac{\rho}{2}k^{n-1}+(-\frac{1}{6}\Delta''\rho+\frac{1}{24}(|R|^2-4|Ric|^2+3\rho^2))k^{n-2}+\cdots\right).
	\end{split}
\end{align}
 
\subsection{Analytic torsion and Quillen metric}\label{sub1}

The definitions and results in this subsection
can be found in  \cite{BGV, Bismut1, Bismut2, Bismut3,  Ma1, Ray}.

 Let $\pi:\mc{X}\to M$ be a  proper holomorphic mapping between complex manifolds $\mc{X}$ and $M$, 
$(F,h_F)$  a holomorphic Hermitian vector bundle on
 $\mc{X}$, $\n^{F}$ the corresponding Chern connection, and
$R^{F}=(\n^{F})^2$ its curvature. 
For any $y\in M$, let $\mc{X}_y=\pi^{-1}(y)$ be the fiber over $y$ with 
 K\"ahler metric $g^{\mc{X}_y}$ depending smoothly on $y$. The fibers are assumed
to be compact.

 For any $0\leq p\leq n:=\dim_{\mb{C}}\mc{X}_y$ we put
\begin{align*}
E^p_y:=A^{0,p}(\mc{X}_y,F),\quad E_y=\bigoplus_{p\geq 0}E^p_y.	
\end{align*}
The operator $D_y=\b{\p}_y+\b{\p}^*_y$ acts on the fiber $E_y$.

For every $y\in M$, the spectrum of $D^2_y$ is discrete. For $b>0$, let $K^{b,p}_y$ be the sum of the eigenspaces of the operator  $D^2_y$ acting on $E^p_y$ for eigenvalues $<b$.
Let $U^b$ be the open set:
$$U^b=\{y\in M; b\not\in \text{Spec}D^2_y\}.$$

On each open set $U^b$, $K^{b,p}$ is a smooth finite dimensional vector bundle. Set
\begin{align*}
K^{b,+}=\bigoplus_{p\,\text{even}}K^{b,p},\quad K^{b,+}=\bigoplus_{p\,\text{odd}}K^{b,p},\quad K^b=K^{b,+}\oplus K^{b,-}.	
\end{align*}
Define the following line bundle $\lambda^b$ on $U^b$,
\begin{align}\label{lambdab}\lambda^b=(\det K^{b,0})^{-1}\otimes (\det K^{b,1})\otimes (\det K^{b,2})^{-1}\otimes\cdots.\end{align}
For $0<b<c$, if $y\in U^b\cap U^c$, let $K^{(b,c),p}_y$ be the sum of eigenspaces of $D^2_y$ in $E^p_y$ for eigenvalues $\mu$ such that $b<\mu<c$. Set
\begin{align*}
K^{(b,c),+}_y=\bigoplus_{p\,\text{even}}K^{(b,c),p},\quad K^{(b,c),-}_y=\bigoplus_{p\,\text{odd}}K^{(b,c),p},\quad 	K^{(b,c)}_y=K^{(b,c),+}_y\oplus K^{(b,c),-}_y.
\end{align*}
Define $\lambda^{(b,c)}$ accordingly  as before. Let $\b{\p}^{(b,c)}$ and $D^{(b,c)}$ be the restriction of $\b{\p}$ and $D$ to $K^{(b,c)}$.  $D^{(b,c)}_{\pm}$ is the restriction of $D$ to $K^{(b,c),\pm}$.

Since the chain complex
\begin{align}\label{sequence}
0\to K^{(b,c),0}\xrightarrow{\b{\p}^{(b,c)}}K^{(b,c),1}	\xrightarrow{\b{\p}^{(b,c)}}\cdots \xrightarrow{\b{\p}^{(b,c)}}K^{(b,c),n}\to 0
\end{align}
is acyclic,  $\lambda^{(b,c)}$ has a canonical non-zero section $T(\b{\p}^{(b,c)})$ which is smooth on $U^b\cap U^c$ (see \cite[Definition 1.1]{Bismut1}). For $0<b<c$, over $U^b\cap U^c$, we have the $C^{\infty}$ identifications
\begin{align*}
\lambda^c=\lambda^b\otimes \lambda^{(b,c)}. 	
\end{align*}
We identify $\lambda^b$ and $\lambda^c$ over $U^b\cap U^c$ by the $C^{\infty}$ map
\begin{align}\label{tran}
s\in \lambda^b\mapsto s\otimes T(\b{\p}^{(b,c)})\in \lambda^c.
\end{align}

 \begin{defn}(\cite[Def. 1.1]{Bismut3})\label{line bundle}
 {\it The $C^{\infty}$ line bundle  $\lambda$ over $M$ 
is $\{(U^b, \lambda^b)\}$  with the transition functions (\ref{tran}) on $U^b\cap U^c$.}
 \end{defn}

The analytic torsion of the chain complex (\ref{sequence}) was  introduced by
Ray and Singer \cite{Ray}.
\begin{defn}\label{torsiondef}
	{\it The analytic torsion $\tau(\b{\p}^{(b,c)})$ associated to the acyclic chain complex (\ref{sequence}) is  defined as the positive real number
\begin{align*}
\tau(\b{\p}^{(b,c)})=\left((\det (D_1^{(b,c)})^2)(\det (D_2^{(b,c)})^2)^{-2}(\det (D_3^{(b,c)})^2)^3\cdots\right)^{1/2},
\end{align*}
where $D^{(b,c)}_p$ is the restriction of $D$ to $K^{(b,c), p}$, $1\leq p\leq n$.  If $b$ is a small constant less than all positive eigenvalues of $D^2_y$, then we denote
\begin{align*}
\tau(\b{\p}):=\tau(\b{\p}^{(b,+\infty)}).	
\end{align*}}
\end{defn}

 Since $K^b$ and $K^{(b,c)}$ are orthogonal subspaces of $K^c$, by \cite[Proposition 1.5]{Bismut1}, we find that if $s\in \lambda^b$,
\begin{align*}
|s\otimes T(\b{\p}^{(b,c)})|^c=|s|^b\tau(\b{\p}^{(b,c)}),	
\end{align*}
where $|\cdot|^b$ is the induced metric by $(\mc{X}_y, g^{\mc{X}_y})$ and $(F, h_F)$.

Now let $N_V$ be the number operator on $E$ such that $N_V\eta=p\eta$ for $\eta\in E^p$. Set $Q^b=I-P^b$, where $P^b$ is the orthogonal projection operator from $E_y$ on $K^b_y$.

For $y\in U^b$, $\text{Re}(s)>l$, set
\begin{align*}
\theta^b_y(s)=-Tr_s[N_V[D^2]^{-s}Q^b]=\frac{-1}{\Gamma(s)}\int^{+\infty}_0 u^{s-1}Tr_s[N_V\exp(-uD^2)Q^b]du.	
\end{align*}
	Similar if $0<b<c<+\infty$, for $y\in U^b\cap U^c$, set
	\begin{align*}
	\theta^{(b,c)}_y(s)=-Tr_s[N_V[D^2]^{-s}P^{(b,c)}]=\frac{-1}{\Gamma(s)}	\int^{+\infty}_0 u^{s-1}Tr_s[N_V\exp(-uD^2)P^{(b,c)}]du.
	\end{align*}
	
The functions $\theta^b_y$ and $\theta^{(b,c)}_y$ extend into a meromorphic function which is holomorphic at $s=0$. Also on $U^b\cap U^c$,
\begin{align*}
\theta^b=\theta^{(b,c)}+\theta^c.	
\end{align*}
and by \cite[Equation 1.32]{Bismut3},
\begin{align*}
\log (\tau^2(\b{\p}^{(b,c)}))=-\theta^{(b,c)'}(0).	
\end{align*}

For $y\in U^b$, denote
\begin{align*}
\tau_y(\b{\p}^{(b,+\infty)})=\exp\left(-\frac{1}{2}\theta^{b'}_y(0)\right).
\end{align*}
Let $\|\bullet\|^b$ denote the metric on the line bundle $(\lambda^b, U^b)$,
\begin{align}\label{torsion}
\|\bullet\|^b=|\bullet|^b\tau_y(\b{\p}^{(b,+\infty)}).	
\end{align}

Then the definition of Quillen metric $\|\bullet\|_Q$ and  Chern form $c_1(\lambda, \|\bullet\|_Q)$ of the Quillen metric are given by the following theorem.
\begin{thm}[\cite{Bismut1, Bismut2, Bismut3}]\label{Bis}
The metrics $\|\bullet\|^b$ on $(\lambda^b, U^b)$ patch into a smooth Hermitian metric $\|\bullet\|_Q$ on the holomorphic line bundle $\lambda$.	
The Chern form of Hermitian line bundle $(\lambda, \|\bullet\|_Q)$ is
\begin{align}\label{Chern forms}
c_1(\lambda, \|\bullet\|_Q)=-\left\{\int_{\mc{X}/M} Td\left(\frac{-R^{T_{\mc{X}/M}}}{2\pi i}\right)Tr\left[\exp\left(\frac{-R^{F}}{2\pi i}\right)\right]\right\}^{(1,1)}.
\end{align}
\end{thm}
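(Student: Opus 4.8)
The statement has two parts: the local metrics $\|\bullet\|^b$ glue to a smooth Hermitian metric $\|\bullet\|_Q$ on $\lambda$, and its Chern form is the degree-$(1,1)$ component of the Grothendieck--Riemann--Roch integrand. I would follow the superconnection approach of Bismut--Gillet--Soul\'e, treating the gluing first and then the curvature identity.

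For the gluing I would combine the three facts already recorded before the statement. On $U^b\cap U^c$ the bundles are identified by $s\mapsto s\otimes T(\b{\p}^{(b,c)})$, the $L^2$-metrics transform by $|s\otimes T(\b{\p}^{(b,c)})|^c=|s|^b\,\tau(\b{\p}^{(b,c)})$, the zeta functions split as $\theta^b=\theta^{(b,c)}+\theta^c$, and $\log\tau^2(\b{\p}^{(b,c)})=-\theta^{(b,c)'}(0)$ while $\tau_y(\b{\p}^{(b,+\infty)})=\exp(-\tfrac12\theta^{b'}_y(0))$. Differentiating the zeta-splitting at $s=0$ gives $\theta^{b'}(0)=\theta^{(b,c)'}(0)+\theta^{c'}(0)$, hence $\tau_y(\b{\p}^{(b,+\infty)})=\tau(\b{\p}^{(b,c)})\,\tau_y(\b{\p}^{(c,+\infty)})$; multiplying by the transformation law for $|\bullet|$ yields $\|s\otimes T(\b{\p}^{(b,c)})\|^c=\|s\|^b$, so the local metrics are compatible with the transition maps and patch to a global metric. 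Smoothness on each $U^b$ reduces to smoothness of $y\mapsto\tau_y(\b{\p}^{(b,+\infty)})$, which follows from the smooth dependence of the fibrewise heat kernel $\exp(-uD_y^2)$ on $y$, together with the uniform control needed to differentiate the Mellin transform defining $\theta^b_y(s)$ under the integral sign and at $s=0$.

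The curvature formula is the substantive part. I would form the infinite-rank $\mb{Z}_2$-graded bundle $E\to M$ with fibre $A^{0,\bullet}(\mc{X}_y,F)$, equipped with its $L^2$-metric, and build Bismut's rescaled superconnection $\mb{A}_t$, whose leading part is $\n^E+\sqrt t\,D$, where $\n^E$ is the natural unitary connection induced by a horizontal distribution on $\mc{X}\to M$ and $D=\b{\p}_y+\b{\p}^*_y$. The even form $\alpha_t:=Tr_s[\exp(-\mb{A}_t^2)]$ is closed and $[\alpha_t]$ is independent of $t$. The two ends of the $t$-interval produce the formula: as $t\to0$ Bismut's local family index theorem identifies the limit of $\alpha_t$ with $\int_{\mc{X}/M}Td(-R^{T_{\mc{X}/M}}/2\pi i)\,Tr[\exp(-R^{F}/2\pi i)]$ as a form on $M$, while as $t\to\infty$ the operator $\exp(-\mb{A}_t^2)$ concentrates on $\ker D$, so $\alpha_t$ tends to the Chern character form of the cohomology bundle with its $L^2$-metric (in the paper's application Kodaira vanishing concentrates the cohomology in degree $0$, giving $\lambda\cong(\det E^k)^{-1}$ by (\ref{0.8})). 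To pass from Chern characters of the virtual bundle to the \emph{metric} on the determinant line I would insert the number operator $N_V$ and study $\zeta_y(s)=-Tr_s[N_V(D_y^2)^{-s}]$ on the positive spectrum, whose derivative at $s=0$ equals $-2\log\tau_y$. Decomposing $c_1(\lambda,\|\bullet\|_Q)$ into the Chern form of $(\lambda,|\bullet|^b)$ together with a $\p\b{\p}\log\tau^2$ correction, the transgression relating the $t\to0$ and $t\to\infty$ limits shows that this sum is exactly the degree-$(1,1)$ part of the $t\to0$ limit, i.e. the asserted integral; equivalently, the analytic torsion is precisely the secondary object absorbing the difference between the two limiting Chern character forms.

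The main obstacle is the small-time analysis, namely Bismut's local family index theorem: proving that, uniformly over compact subsets of $M$, the fibrewise supertrace of the heat kernel of $\mb{A}_t^2$ localizes as $t\to0$ to the Todd--Chern integrand along the fibre. This requires a Getzler-type rescaling of the Clifford and exterior variables adapted to the family, control of the off-diagonal heat kernel, and a careful identification of the surviving degree-$2$ component in the base directions. A secondary difficulty is the variation formula for $\zeta_y'(0)$, needed both for smoothness and for the $\p\b{\p}\log\tau^2$ computation, which rests on the transgression of the superconnection Chern character and on showing that the contributions of large and small eigenvalues to the derivative combine correctly through the analytic continuation.
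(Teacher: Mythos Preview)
The paper does not supply its own proof of this theorem: it is quoted in the preliminaries (Subsection~\ref{sub1}) as a result of Bismut--Gillet--Soul\'e, with the references \cite{Bismut1,Bismut2,Bismut3} given in the statement itself, and is used thereafter as a black box (see \eqref{Bisexp}). So there is nothing in the paper to compare your argument against.

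Your outline is a faithful high-level sketch of the original superconnection proof in \cite{Bismut1,Bismut2,Bismut3}: the gluing via the additivity $\theta^b=\theta^{(b,c)}+\theta^c$ and $|s\otimes T(\b{\p}^{(b,c)})|^c=|s|^b\tau(\b{\p}^{(b,c)})$ is exactly how those papers show the local metrics patch, and the curvature identity is indeed obtained from Bismut's local family index theorem for the superconnection $\mb{A}_t$ together with the transgression/anomaly analysis that identifies the analytic torsion as the secondary object interpolating between the $t\to 0$ and $t\to\infty$ limits. The obstacles you flag (the small-time Getzler-type rescaling for the family and the control of $\zeta_y'(0)$ through the Mellin transform) are precisely the technical cores of \cite{Bismut2,Bismut3}. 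For the specific application in this paper one has Kodaira vanishing, so $\lambda\cong(\det E^k)^{-1}$ and the large-$t$ limit is concentrated in degree $0$; but the general statement you are asked about does not assume this, and your sketch correctly treats the general case.
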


The Knudsen-Mumford determinant is defined by
\begin{align*}
\lambda^{KM}=(\det R\pi_*F)^{-1}.	
\end{align*}
The fiber $\lambda^{KM}_y$ is by definition given by
\begin{align*}
\lambda^{KM}_y=\bigotimes_{i\geq 0}\det H^i(\mc{X}_y,F)^{(-1)^{i+1}}.	
\end{align*}

We assume that $\pi$ is locally K\"ahler, i.e. there is an open covering $\mathscr{U}$ of $M$ such that if $U\in \mathscr{U}$, $\pi^{-1}(U)$ admits a K\"ahler metric.

\begin{thm}[\cite{Bismut1, Bismut2, Bismut3}]\label{Bis11}
Assume that $\pi$ is locally K\"ahler. Then the identification of the fibers $\lambda_y\cong \lambda^{KM}_y$ defines a holomorphic isomorphism of line bundles $\lambda\cong \lambda^{KM}$. The Chern form of the Quillen metric on $\lambda\cong \lambda^{KM}$ is given by
(\ref{Chern forms}).
\end{thm}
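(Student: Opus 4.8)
\emph{Proof proposal.}
The plan is to follow the Bismut--Gillet--Soul\'e method and establish in turn the holomorphic identification $\lambda\cong\lambda^{KM}$, the well-definedness of $\|\bullet\|_Q$, and the curvature formula (\ref{Chern forms}). For the identification: the Knudsen--Mumford determinant $\lambda^{KM}=(\det R\pi_*F)^{-1}$ is holomorphic by its sheaf-theoretic construction, and to match it with $\lambda$ I would work locally --- near $y_0\in M$ choose $b<c$ outside $\text{Spec}\,D^2_{y_0}$, so that over a neighborhood the finite-rank complex $(K^{b,\bullet},\b{\p})$ is a smooth subcomplex of the Dolbeault complex $(A^{0,\bullet}(\mc{X}_y,F),\b{\p})$ quasi-isomorphic to it; by Quillen's observation $\lambda^b=\det(K^{b,\bullet})$ then carries a canonical holomorphic structure under which $\lambda^b\cong\lambda^{KM}$, and the transitions (\ref{tran}) are holomorphic since the torsion sections $T(\b{\p}^{(b,c)})$ of the acyclic complexes (\ref{sequence}) are.

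For the metric I would argue as in \cite{Bismut1}: the function $\theta^b_y(s)=-Tr_s[N_V(D^2)^{-s}Q^b]$ has a meromorphic continuation holomorphic at $s=0$ --- from the short-time expansion of $Tr_s[N_V e^{-uD^2}Q^b]$ --- so $\tau_y(\b{\p}^{(b,+\infty)})=\exp(-\tfrac12\theta^{b'}_y(0))$ is a positive smooth function on $U^b$; combining this with the smoothness of the $L^2$-metric $|\bullet|^b$, the multiplicativity $\tau^{(b,+\infty)}=\tau^{(b,c)}\tau^{(c,+\infty)}$, and $|s\otimes T(\b{\p}^{(b,c)})|^c=|s|^b\tau(\b{\p}^{(b,c)})$, the local metrics $\|\bullet\|^b=|\bullet|^b\tau_y(\b{\p}^{(b,+\infty)})$ agree under (\ref{tran}) and patch to a smooth, $b$-independent Hermitian metric $\|\bullet\|_Q$ on $\lambda$.

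The curvature computation is the decisive and hardest step. Here I would introduce the Bismut superconnection $\mathbb{B}_t$ on the infinite-rank bundle $\pi_*(\Lambda^{0,\bullet}T^*_{\mc{X}/M}\otimes F)\to M$ and analyze the normalized $M$-forms $Tr_s[N_V\exp(-\mathbb{B}_t^2)]$ and their transgression as $t$ ranges over $(0,\infty)$. The $t\to0$ asymptotics --- the local family index theorem, obtained by a Mehler-type heat-kernel computation for $\mathbb{B}_t^2$ --- yield $\int_{\mc{X}/M}Td(-R^{T_{\mc{X}/M}}/2\pi i)\,Tr[\exp(-R^F/2\pi i)]$; the $t\to\infty$ asymptotics are governed by the fiberwise harmonic forms, i.e.\ the cohomology bundles $H^\bullet(\mc{X}_y,F)$ with their $L^2$-metrics, and produce the $L^2$-curvature of $\lambda^{KM}$; and the degree-$0$ part of the transgression $\int_0^\infty$ equals $-\theta^{b'}(0)=\log\tau^2$, supplying the $\b{\p}\p\log\tau^2$ term relating the $L^2$- and Quillen metrics. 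Feeding these into $c_1(\lambda,\|\bullet\|_Q)=-\tfrac{\sqrt{-1}}{2\pi}\p\b{\p}\log\|s\|_Q^2$ (for a local holomorphic frame $s$) gives the $(1,1)$-form (\ref{Chern forms}). The main obstacle is exactly this last step: establishing the short-time asymptotics of $\exp(-\mathbb{B}_t^2)$ and the uniform-in-$t$ estimates that make the transgression over $(0,\infty)$ rigorous --- the analytic core of \cite{Bismut1,Bismut2,Bismut3}; the locally K\"ahler hypothesis is used to ensure the superconnection formalism produces a $\p\b{\p}$-closed representative of the right bidegree.
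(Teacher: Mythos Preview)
The paper does not prove this theorem at all: Theorem~\ref{Bis11} is quoted verbatim from \cite{Bismut1,Bismut2,Bismut3} and used as a black box, with no argument given beyond the citation. Your proposal is a faithful outline of the original Bismut--Gillet--Soul\'e proof, so it is correct in spirit, but there is nothing in the present paper to compare it against.
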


\section{The asymptotic of the curvature of direct image bundle}\label{sec2}

We shall give the expansion of $c_1(E^k, \|\bullet\|_k)$ and $-c_1(\lambda, \|\bullet\|_Q)$ up to $o(k^{n-1})$.

Let $\pi: \mc{X}\to M$ be a holomorphic fibration with compact fibers
and $L$ a relative ample line bundle over $\mc{X}$
as in the Section 2.1. Denote
\begin{align}\label{omega}
\omega=\sqrt{-1}\p\b{\p}\phi.	
\end{align}

\subsection{The asymptotic of the curvature of $L^2$-metric}\label{Asy1}
The curvature of direct image bundle $E^k=\pi_*(L^k+K_{\mc{X}/M})$ is,
by (\ref{L2 curvature}), 
\begin{align*}
\langle\sqrt{-1}\Theta^{E^k}u,u\rangle=\int_{\mc{X}_y}kc(\phi)|u|^2e^{-k\phi}+k\langle (k+\Delta')^{-1}i_{\mu_{\alpha}}u,i_{\mu_{\beta}}u\rangle\sqrt{-1}dz^{\alpha}\wedge d\b{z}^{\beta}
\end{align*}
for any element $u\in E^k_y$. For any vector $\zeta=\zeta^{\alpha}\frac{\p}{\p z^{\alpha}}$ of $TM$,
\begin{align}\label{2.1}
\begin{split}
	\langle\Theta^{E^k}u,u\rangle(\zeta,\b{\zeta})&=-\sqrt{-1}\left(\int_{\mc{X}_y}kc(\phi)|u|^2e^{-k\phi}\right)(\zeta,\b{\zeta})\\
	&\quad+k\langle (k+\Delta')^{-1}i_{\mu}u,i_{\mu}u\rangle,
	\end{split}
\end{align}
where 
\begin{align}\label{definition of mu}
\mu=\mu_{\b{j}}^id\b{v}^j\otimes \frac{\p}{\p v^i}, \quad \mu^i_{\b{j}}=-\p_{\b{j}}(\phi_{\alpha\b{l}}\phi^{\b{l}i})\zeta^{\alpha}.	
\end{align}

 The following technical expansion of $(\Delta' +k)^{-1}$ will be
critical to find the asymptotics of the $L^2$-curvature; 
the main point of this expansion is that the leading
term of the contribution to the $L^2$-curvature
of each term below is effectively found. 
(Composed with the operators $i_\mu$
and $i_\mu^\ast
$ it gives an expansion of the Toeplitz operator  with symbol
$i_\mu^\ast(\Delta'+k)^{-1}i_\mu$  on the  cohomology 
space $H^{0}(\mc{X}_y,L^k+K_{\mc{X}_y})$.)

\begin{lemma}\label{expanding}
The resolvent operator $(\Delta'+k)^{-1}$ has the following $7$-term-expansion, 
\begin{equation}
\label{2.20}
(\Delta'+k)^{-1}=I + II + \cdots + VI +VII,
\end{equation}
where
\begin{equation*}
I=\frac{1}{2k}, \,\,  II= \frac{1}{6k^2}(k-\Delta'-R^*), \,\,  III=\frac{1}{4k^2}R^*, \,\, IV=
\frac{1}{4k^2}(\Delta'+k)^{-1}(k-\Delta')R^*,
\end{equation*}
\begin{equation*}
V=\frac{1}{18k^3}(2k-\Delta')(k-\Delta'-R^*), 
\, VI= \frac{1}{36k^4}(2k-\Delta')^2(k-\Delta'-R^*),
\end{equation*}
\begin{equation*}
VII=\frac{1}{36k^4}(\Delta'+k)^{-1}(k-\Delta')(2k-\Delta')^2(k-\Delta'-R^*).
\end{equation*}
\end{lemma}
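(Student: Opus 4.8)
The plan is to prove (\ref{2.20}) as an identity of operators acting on smooth $(0,1)$-forms with values in $L^k+K_{\mc X/M}$ over a fibre $\mc X_y$, and the key point is that it is in fact a purely algebraic identity in $\Delta'$ and $R^*$. The structural observation that makes this transparent is that in each of the seven summands $I,\dots,VII$ the endomorphism $R^*$ occurs at most once and, whenever it occurs, it is the rightmost factor, every factor to its left being a polynomial in $\Delta'$ or the resolvent $(\Delta'+k)^{-1}$ --- in all cases a function of the single self-adjoint operator $\Delta'$. Expanding the products and collecting terms therefore rewrites the right-hand side of (\ref{2.20}) as
\[
I+\cdots+VII \;=\; P(\Delta')\;+\;Q(\Delta')\,R^*,
\]
where $P(x)$ and $Q(x)$ are the explicit rational functions of one variable ($k$ a parameter)
\begin{align*}
P(x)&=\frac1{2k}+\frac{k-x}{6k^2}+\frac{(2k-x)(k-x)}{18k^3}+\frac{(2k-x)^2(k-x)}{36k^4}+\frac{(k-x)^2(2k-x)^2}{36k^4(x+k)},\\
Q(x)&=\frac1{12k^2}+\frac{k-x}{4k^2(x+k)}-\frac{2k-x}{18k^3}-\frac{(2k-x)^2}{36k^4}-\frac{(k-x)(2k-x)^2}{36k^4(x+k)}.
\end{align*}
Thus (\ref{2.20}) is equivalent to the two scalar identities $P(x)=(x+k)^{-1}$ and $Q(x)\equiv 0$; in particular the right-hand side does not actually depend on $R^*$.

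It then remains to verify these two scalar identities, which is elementary: multiplying $P(x)-(x+k)^{-1}$ and $Q(x)$ through by $36k^4(x+k)$ turns each into a polynomial identity in $x$ (of degree at most $4$ and $3$ respectively), which one checks by expanding, or by evaluating at, say, the five points $x=-k,0,k,2k,3k$. Once $Q\equiv 0$ is known, $Q(\Delta')=0$ by the functional calculus for $\Delta'$, so the right-hand side collapses to $P(\Delta')=(\Delta'+k)^{-1}$, which is (\ref{2.20}).

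So the proof of the lemma itself is routine; the only thing requiring a little care is to keep $R^*$ always on the right, so that the reduction to scalar identities is legitimate. Two remarks on the shape of (\ref{2.20}). First, the $R^*$-terms, though algebraically redundant, are retained on purpose: the combinations $k-\Delta'-R^*$ will later be identified --- via a Bochner--Kodaira identity on the fibre --- with operators built from $\bar\partial^*$, and it is in that form that each of $I,\dots,VII$ contributes manageably to the fibre trace of $u\mapsto\langle(\Delta'+k)^{-1}i_\mu u,i_\mu u\rangle$. Second, the particular seven-term grouping can be motivated by iterating the trivial resolvent identity $(\Delta'+k)^{-1}=\frac1{2k}+\frac{k-\Delta'}{2k}(\Delta'+k)^{-1}$ (and its analogue with $2k-\Delta'$) a few times and reorganizing, inserting $\pm R^*$ inside the factors $k-\Delta'$; but the direct verification above is the shortest rigorous route. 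The genuinely substantive work --- showing that the fibre traces of $III,\dots,VII$, in particular the two summands still containing a resolvent, are $o(k^{n-1})$ by means of the Bergman-kernel expansion of Theorem \ref{cat} --- comes afterward.
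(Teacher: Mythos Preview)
Your proof is correct and takes a somewhat different organizational route from the paper's. The paper verifies the identity by an iterated telescoping: it repeatedly applies the two resolvent identities
\[
(\Delta'+k)^{-1}(k-\Delta')+\mathrm{Id}=2k(\Delta'+k)^{-1},\qquad
(\Delta'+k)^{-1}(2k-\Delta')+\mathrm{Id}=3k(\Delta'+k)^{-1},
\]
first collapsing $III+IV$ and $VI+VII$, then absorbing $V$, then $II$, and finally $I$, until only $(\Delta'+k)^{-1}$ remains. Your approach instead exploits the structural fact that $R^*$ occurs at most once in each summand and always on the right, so the whole right-hand side splits as $P(\Delta')+Q(\Delta')R^*$ with $P,Q$ explicit rational functions of a single variable; you then check the two scalar identities $P(x)=(x+k)^{-1}$ and $Q(x)\equiv 0$. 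This is equally elementary but a bit more conceptual: it makes transparent (which the paper's computation does not) that the right-hand side is in fact independent of $R^*$, and that the presence of $R^*$ in $II,V,VI,VII$ is purely a packaging device aimed at the later Bochner--Kodaira identification (Lemma~\ref{lemma1}). Either way the verification is routine; your closing remarks about why the seven-term grouping is chosen and where the real work lies are accurate.
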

\begin{proof}
	The RHS of (\ref{2.20}), by elementary computations,  is
	\begin{align*}
&\frac{1}{2k}+\frac{1}{6k^2}(k-\Delta'-R^*)+\frac{1}{4k^2}R^*+\frac{1}{4k^2}(\Delta'+k)^{-1}(k-\Delta')R^*\\
&\quad+\frac{1}{18k^3}(2k-\Delta')(k-\Delta'-R^*)+\frac{1}{36k^4}(2k-\Delta')^2(k-\Delta'-R^*)\\
&\quad+\frac{1}{36k^4}(\Delta'+k)^{-1}(k-\Delta')(2k-\Delta')^2(k-\Delta'-R^*)\\
&=\frac{1}{2k}+\frac{1}{6k^2}(k-\Delta'-R^*)+\frac{1}{4k^2}\left((\Delta'+k)^{-1}(k-\Delta')+Id\right)R^*\\
&\quad+\frac{1}{18k^3}(2k-\Delta')(k-\Delta'-R^*)\\
&\quad+\frac{1}{36k^4}\left((\Delta'+k)^{-1}(k-\Delta')+Id\right)(2k-\Delta')^2(k-\Delta'-R^*)\\
&=\frac{1}{2k}+\frac{1}{6k^2}(k-\Delta'-R^*)+\frac{1}{2k}(\Delta'+k)^{-1}R^*\\
&\quad+\frac{1}{18k^3}(2k-\Delta')(k-\Delta'-R^*)\\
&\quad+\frac{1}{18k^3}(\Delta'+k)^{-1}(2k-\Delta')^2(k-\Delta'-R^*)\\
&=\frac{1}{2k}+\frac{1}{6k^2}(k-\Delta'-R^*)+\frac{1}{2k}(\Delta'+k)^{-1}R^*\\
&\quad +\frac{1}{6k^2}(\Delta'+k)^{-1}(2k-\Delta')(k-\Delta'-R^*)\\
&=\frac{1}{2k}+\frac{1}{2k}(\Delta'+k)^{-1}(k-\Delta'-R^*)+\frac{1}{2k}(\Delta'+k)^{-1}R^*\\
&=\frac{1}{2k}+\frac{1}{2k}(\Delta'+k)^{-1}(k-\Delta')\\
&=(\Delta'+k)^{-1},
	\end{align*}
which completes the proof. Here the second and last equalities follow from 
	\begin{align*}
	(\Delta'+k)^{-1}(k-\Delta')+Id=	2k(\Delta'+k)^{-1}.
	\end{align*}
while the third and fourth equalities follow from 
 \begin{align*}
	(\Delta'+k)^{-1}(2k-\Delta')+Id=3k(\Delta'+k)^{-1}.
	\end{align*}

\end{proof}

We shall treat each term in the expansion using 
the following lemmas. 
We refer  \cite[Chapter VII]{Dem} for the calculus
on K\"a{}hler manifolds.

\begin{lemma}\label{lemma1}
	Let $(X,\omega)$ be a compact K\"ahler manifold and $(L^k,e^{-k\phi})$ be  a Hermitian line bundle over $X$ with $\sqrt{-1}\p\b{\p}\phi=\omega$. For any $\alpha\in A^{n-1,1}(X, L^k)$ it holds
	\begin{align*}
	(k-\n'^*\n'-R^*)\alpha=(dv^t)^*\n'(\n_{\frac{\p}{\p\b{v}^t}}\alpha),
	\end{align*}
where $\n'^*$ is the adjoint operator of the $(1,0)$-part $\n'$ of Chern connection, $(dv^t)^*=\phi^{\b{t}s}i_{\frac{\p}{\p v^s}}$, $R^*=R^{\b{t}j}_{i\b{l}}i_{\frac{\p}{\p v^j}}dv^i\wedge d\b{v}^l\wedge i_{\frac{\p}{\p\b{v}^t}}$, $ R^{\b{t}j}_{i\b{l}}=-\p_{i}\p_{\b{l}}\phi^{\b{t}j}+\phi^{\b{t}s}_{\b{l}}\phi^{\b{k}j}_i\phi_{s\b{k}}$ is the Chern curvature component of $(T^*_X,(\phi^{i\b{j}}))$, and $\n_{\frac{\p}{\p\b{v}^t}}=\frac{\p}{\p\b{v}^t}-\o{\Gamma^l_{tj}}d\b{v}^j\wedge i_{\frac{\p}{\p\b{v}^l}}$, $\Gamma^l_{tj}=\frac{\p\phi_{t\b{k}}}{\p v^j}\phi^{\b{k}l}$. 
\end{lemma}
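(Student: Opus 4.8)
The plan is to compute $\n'^*\n'\alpha$ directly in normal coordinates and compare with the claimed Weitzenb\"ock-type identity. Since $\alpha\in A^{n-1,1}(X,L^k)$ is of near-top bidegree, the only essential data is the single "function" component: writing $\alpha=\alpha_{\b l}\,(dv^t)^*{}^{\vee}\cdots$, it is more efficient to use the contraction operators $(dv^t)^*=\phi^{\b t s}i_{\p/\p v^s}$ as a frame for the dual of the $(n-1,0)$-part. The key algebraic facts I would set up first are the (anti)commutation relations among $dv^i\wedge$, $d\b v^j\wedge$, $(dv^t)^*$, and $i_{\p/\p\b v^t}$ on a K\"a{}hler manifold, together with $\n'=\sum dv^i\wedge(\p_{v^i}-k\phi_i-\text{connection terms})$ and its adjoint $\n'^*=-\sum (dv^t)^*(\n_{\p/\p v^t})$ up to sign conventions fixed by $\n'^*=\sqrt{-1}[\Lambda_\omega,\n']$.

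The main computation proceeds as follows. First I would expand $\n'\alpha\in A^{n,1}(X,L^k)$ and then apply $\n'^*$, working at a point $p$ in K\"a{}hler normal coordinates so that $\phi_{i\b j}(p)=\delta_{ij}$ and $d\phi_{i\b j}(p)=0$; the curvature $R^{\b t j}_{i\b l}$ then equals $-\p_i\p_{\b l}\phi^{\b t j}$ at $p$, matching the stated formula since the quadratic Christoffel correction vanishes there. The factor $k$ on the left-hand side will emerge from the term where $\b\p$ hits the weight $e^{-k\phi}$, i.e. from $[\n'^*,\n']$ acting through the curvature $\sqrt{-1}R^{L^k}=k\omega$: this is the standard Bochner--Kodaira mechanism producing $k$ times the identity on $(n-1,1)$-forms. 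The $\n'^*\n'$ term on the right is what remains after moving one derivative past, and the $R^*$ term collects precisely the commutator of $(dv^t)^*$ with the covariant derivatives of the cotangent bundle, i.e. the curvature $R^{\b t j}_{i\b l}$ of $(T^*_X,(\phi^{i\b j}))$ contracted as written. Finally I would recognize that $(k-\n'^*\n'-R^*)\alpha$, after cancellation, is exactly $\sum_t(dv^t)^*\n'\bigl(\n_{\p/\p\b v^t}\alpha\bigr)$, by checking that the right-hand side, when expanded, reproduces the same three contributions (the $\n'$ differentiating the weight gives $k$, differentiating $\alpha$ gives $-\n'^*\n'$ after using $\b\p\alpha=0$ is \emph{not} assumed — so care is needed — and commuting $(dv^t)^*$ through $\n'$ produces $-R^*$).

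The main obstacle I anticipate is bookkeeping of signs and of the ordering of the operators $(dv^t)^*$, $\n'$, $\n_{\p/\p\b v^t}$, $d\b v^j\wedge$ when they are interleaved: on $(n-1,1)$-forms several naive terms that look like they should survive actually cancel because of the wedge-degree being maximal in the holomorphic slot, and getting these cancellations right (especially the precise coefficient in front of $R^*$, and ensuring no extra $\bar\p\alpha$-type term appears) is where the real work lies. A secondary subtlety is that the identity is asserted for \emph{all} $\alpha\in A^{n-1,1}$, not just $\b\p$-closed or harmonic ones, so I cannot discard $\n''\alpha$-terms; I would need to verify that such terms genuinely do not appear on either side rather than vanishing for cohomological reasons. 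Once the normal-coordinate computation is pinned down, promoting it to a coordinate-free statement is automatic since both sides are manifestly tensorial.
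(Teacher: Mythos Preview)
Your plan is workable in principle but differs from the paper's route and contains a few mis-identifications that would trip you up in execution.

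The paper does \emph{not} pass to normal coordinates. It computes $\nabla'^*\nabla'\alpha$ directly from the K\"ahler identity $\nabla'^*=\sqrt{-1}[\Lambda,\bar\partial]$ (note: $\bar\partial$, not $\nabla'$ as you wrote), obtaining
\[
\nabla'^*\nabla'\alpha=\sqrt{-1}\Lambda\bar\partial\nabla'\alpha-\sqrt{-1}\bar\partial\Lambda\nabla'\alpha,
\]
and then expands the second term using the elementary commutators $[\bar\partial,i_{\partial/\partial\bar v^t}]=\partial/\partial\bar v^t$, $[\bar\partial,i_{\partial/\partial v^s}]=0$, and $[\partial/\partial\bar v^t,\nabla']=-k\,\partial\phi_{\bar t}$. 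The last of these is where the factor $k$ actually enters: it is the line-bundle curvature seen as the commutator of a coordinate antiholomorphic derivative with $\nabla'$, not ``$\nabla'$ differentiating the weight'' on the right-hand side as you suggest. This yields
\[
(k-\nabla'^*\nabla')\alpha=(\bar\partial\phi^{s\bar t})\wedge i_{\partial/\partial v^s}\nabla' i_{\partial/\partial\bar v^t}\alpha+\phi^{s\bar t}i_{\partial/\partial v^s}\nabla'\tfrac{\partial\alpha}{\partial\bar v^t}.
\]
The curvature term $R^*$ then appears not from commuting $(dv^t)^*$ through $\nabla'$, but from replacing the raw derivative $\partial/\partial\bar v^t$ by the covariant $\nabla_{\partial/\partial\bar v^t}$: the Christoffel correction $\overline{\Gamma^l_{tj}}d\bar v^j\wedge i_{\partial/\partial\bar v^l}$, once hit by $\nabla'$ and contracted, produces exactly $R^*$ plus a term that cancels the first summand above.

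Your normal-coordinate approach can certainly be made to work, and for a pointwise tensorial identity it is a legitimate alternative; but you should correct the K\"ahler identity, and be aware that the $k$ and the $R^*$ arise from different commutators than the ones you name. The paper's computation has the advantage of being coordinate-free throughout and of isolating the two sources (line-bundle curvature versus tangent-bundle curvature) cleanly in separate steps.
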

\begin{proof}
	By \cite[Chapter VII, Theorem (1.1)]{Dem},
        $\n'^*=\sqrt{-1}[\Lambda, \b{\p}]$,
 where $\Lambda$ is the adjoint of multiplication operator
        $\omega\wedge \bullet$
by the K\"ahler
        form.  Thus
\begin{align}\label{2.7}
\begin{split}
	\n'^*\n'\alpha
	&=\sqrt{-1}[\Lambda,\b{\p}]\n'\alpha\\
	&=\sqrt{-1}\Lambda\b{\p}\n'\alpha-\sqrt{-1}\b{\p}\Lambda\n'\alpha.
	\end{split}
\end{align}
We expand the second term and find
\begin{align}\label{2.8-1}
\begin{split}
&\quad \sqrt{-1}\b{\p}\Lambda\n'\alpha =\b{\p}\left(\phi^{s\b{t}}i_{\frac{\p}{\p\b{v}^t}}i_{\frac{\p}{\p v^s}}\n'\alpha\right)\\
&=(\b{\p}\phi^{s\b{t}})\wedge i_{\frac{\p}{\p \b{v}^t}}i_{\frac{\p}{\p v^s}}\n'\alpha+\phi^{s\b{t}}i_{\frac{\p}{\p\b{v}^t}}i_{\frac{\p}{\p v^s}}\b{\p}\n'\alpha+\phi^{s\b{t}} \frac{\p}{\p\b{v}^t}i_{\frac{\p}{\p v^s}}\n'\alpha\\
&=(\b{\p}\phi^{s\b{t}})\wedge i_{\frac{\p}{\p v^s}}\n'i_{\frac{\p}{\p \b{v}^t}}\alpha+\sqrt{-1}\Lambda\b{\p}\n'\alpha+\phi^{s\b{t}}i_{\frac{\p}{\p v^s}}\n'\frac{\p\alpha}{\p \b{v}^t}-k\alpha,
\end{split}
\end{align}
where the first equality holds since $[\b{\p}, i_{\frac{\p}{\p\b{v}^t}}]=\frac{\p}{\p\b{v}^t}$ and $[\b{\p},i_{\frac{\p}{\p v^i}}]=0$,
the second equality follows from $[\frac{\p}{\p
  v^t},\n']=-k\p\phi_{\b{t}}$ and $[i_{\frac{\p}{\p \b{v}^t}},
i_{\frac{\p}{\p v^s}}]=[i_{\frac{\p}{\p \b{v}^t}}, \n']=0$. Combining
(\ref{2.7}) with (\ref{2.8-1}) we obtain
\begin{align}\label{2.8}
(k-\n'^*\n)\alpha=(\b{\p}\phi^{s\b{t}})\wedge i_{\frac{\p}{\p v^s}}\n'i_{\frac{\p}{\p \b{v}^t}}\alpha+\phi^{s\b{t}}i_{\frac{\p}{\p v^s}}\n'\frac{\p\alpha}{\p \b{v}^t}.	
\end{align}
Furthermore the action on $\alpha$ of the second term in the RHS of
(\ref{2.8}),
by the definition of $\n_{\frac{\p}{\p\b{v}^t}}$, 
is the operator
\begin{align}\label{2.9}
	\begin{split}
		&\quad\phi^{s\b{t}}i_{\frac{\p}{\p v^s}}\n'\frac{\p}{\p \b{v}^t}\\
		&=\phi^{s\b{t}}i_{\frac{\p}{\p v^s}}\n'\n_{\frac{\p}{\p \b{v}^t}}+\phi^{s\b{t}}i_{\frac{\p}{\p v^s}}\n'(\o{\Gamma^k_{tl}}d\b{v}^l\wedge i_{\frac{\p}{\p\b{v}^k}})\\
		&=(dv^t)^*\n'\n_{\frac{\p}{\p
                    \b{v}^t}}+\phi^{s\b{t}}(\p_i\o{\Gamma^k_{tl}})
                i_{\frac{\p}{\p v^s}}dv^i\wedge d\b{v}^l\wedge
                i_{\frac{\p}{\p \b{v}^k}}+\phi^{s\b{t}}  \overline{\Gamma^k_{tl}} d\b{v}^l\wedge i_{\frac{\p}{\p v^s}}\n' i_{\frac{\p}{\p \b{v}^k}}\\
		&=(dv^t)^*\n'\n_{\frac{\p}{\p \b{v}^t}}+R^*-(\b{\p}\phi^{s\b{t}})\wedge i_{\frac{\p}{\p v^s}}\n'i_{\frac{\p}{\p \b{v}^t}}.
	\end{split}
\end{align}
Substituting (\ref{2.9}) into (\ref{2.8}) we have 
\begin{align*}
	(k-\n'^*\n'-R^*)\alpha=(dv^t)^*\n'(\n_{\frac{\p}{\p\b{v}^t}}\alpha).
	\end{align*}
\end{proof}
\begin{lemma}\label{lemma4} The following identity holds,
	$$\langle(k-\Delta')i_{\mu}u,i_{\mu}u\rangle=\int_{\mc{X}_y}(|\mu|^2_{R^*}-|\o{\n}\mu|^2)|u|^2e^{-k\phi},$$
	where  we have introduced $|\mu|^2_{R^*}:=\mu^l_{\b{t}}
        R^{\b{t}j}_{i\b{l}}\o{\mu^i_{\b{j}}}$, which need not to be
        nonnegative, $|\o{\n}\mu|^2=
\n_{\b{l}}\mu^i_{\b{t}}\o{\n_{\b{k}}\mu^s_{\b{q}}}\phi_{i\b{s}}\phi^{\b{t}q}\phi^{\b{l}k}$, $\n_{\b{l}}\mu^i_{\b{t}}=\p_{\b{l}}\mu_{\b{t}}^i-\o{\Gamma^s_{tl}}\mu^i_{\b{s}}$.
\end{lemma}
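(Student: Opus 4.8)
The plan is to reduce everything to $\alpha:=i_\mu u\in A^{n-1,1}(\mc{X}_y,L^k)$ and to use two preliminary facts. First, since $u$ is holomorphic (a section of $L^k+K_{\mc{X}/M}$ over $\mc{X}_y$) and $\mu$ represents the Kodaira--Spencer class, so $\b{\p}\mu=0$ as a $T^{1,0}_{\mc{X}_y}$-valued $(0,1)$-form, the Cartan-type identity $\b{\p}(i_\mu u)=i_{\b{\p}\mu}u+i_\mu\b{\p}u$ gives $\b{\p}\alpha=0$; writing $u=u'\,dv\otimes e^{\otimes k}$ in an admissible frame with $u'$ holomorphic, $\alpha=\mu^i_{\b{j}}\,d\b{v}^j\wedge i_{\p/\p v^i}u$ with $i_{\p/\p v^i}u$ of holomorphic coefficients, and $\b{\p}\mu=0$ says precisely that $\p_{\b{l}}\mu^i_{\b{j}}$ — hence, the Kähler Christoffel symbols being symmetric, the covariant derivative $\n_{\b{l}}\mu^i_{\b{j}}$ — is symmetric in $l,j$. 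Second, for any $T^{1,0}_{\mc{X}_y}$-valued $(0,\ast)$-form $\nu$ one has $|i_\nu u|^2e^{-k\phi}=|\nu|^2|u|^2e^{-k\phi}$ pointwise (checked in an orthonormal frame, $u$ having top holomorphic degree); applied to $\nu=\o{\n}\mu$ this turns squared norms of $\o{\n}\alpha$ into $\int_{\mc{X}_y}|\o{\n}\mu|^2|u|^2e^{-k\phi}$.

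Next I would split
\[
k-\Delta'=(k-\n'^*\n'-R^*)+R^*-\n'\n'^*
\]
and expand $\langle(k-\Delta')\alpha,\alpha\rangle$ into three contributions. The contribution $-\langle\n'\n'^*\alpha,\alpha\rangle=-\|\n'^*\alpha\|^2$ is immediate. The contribution $\langle R^*\alpha,\alpha\rangle$ is purely algebraic: substituting $\alpha=\mu^a_{\b{b}}\,d\b{v}^b\wedge i_{\p/\p v^a}u$ into $R^*=R^{\b{t}j}_{i\b{l}}i_{\p/\p v^j}dv^i\wedge d\b{v}^l\wedge i_{\p/\p\b{v}^t}$ and contracting against $\alpha$ (using $i_{\p/\p\b{v}^t}u=0$ and $dv^a\wedge i_{\p/\p v^c}u=\delta^a_c u$ repeatedly) gives $\langle R^*\alpha,\alpha\rangle=\int_{\mc{X}_y}|\mu|^2_{R^*}|u|^2e^{-k\phi}$.

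For the third contribution I would invoke Lemma \ref{lemma1}, $(k-\n'^*\n'-R^*)\alpha=(dv^t)^*\n'(\n_{\p/\p\b{v}^t}\alpha)$; pairing with $\alpha$ and transposing $(dv^t)^*$ (the adjoint of $dv^t\wedge$) and $\n'$ yields $\langle\n_{\p/\p\b{v}^t}\alpha,\,\n'^*(dv^t\wedge\alpha)\rangle$. By holomorphicity of $u$ one gets $\n_{\p/\p\b{v}^t}\alpha=(\n_{\b{t}}\mu^i_{\b{j}})\,d\b{v}^j\wedge i_{\p/\p v^i}u$ (only $\mu$ gets differentiated, the form-Christoffel terms of $\n_{\p/\p\b{v}^t}$ assembling the covariant derivative), and, using $\b{\p}\alpha=0$ with the Kähler identity $\n'^*=\sqrt{-1}[\Lambda,\b{\p}]$ and the commutator of $\Lambda$ with $dv^t\wedge(\,\cdot\,)$, one obtains $\n'^*(dv^t\wedge\alpha)=-dv^t\wedge\n'^*\alpha-\b{\p}\big(\phi^{\b{q}t}\mu^i_{\b{q}}\,i_{\p/\p v^i}u\big)=-dv^t\wedge\n'^*\alpha-\phi^{\b{q}t}(\n_{\b{l}}\mu^i_{\b{q}})\,d\b{v}^l\wedge i_{\p/\p v^i}u$. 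Substituting, and using the symmetry of $\n_{\b{l}}\mu^i_{\b{j}}$ in $l,j$ twice — once to see $(dv^t)^*\n_{\p/\p\b{v}^t}\alpha=-\n'^*\alpha$ and once to recognise $\phi^{\b{q}t}(\n_{\b{l}}\mu^i_{\b{q}})\,d\b{v}^l\wedge i_{\p/\p v^i}u=\phi^{\b{q}t}\n_{\p/\p\b{v}^q}\alpha$ — the first summand becomes $+\|\n'^*\alpha\|^2$ and the second $-\|\o{\n}\alpha\|^2=-\int_{\mc{X}_y}|\o{\n}\mu|^2|u|^2e^{-k\phi}$. Adding the three contributions, the $\pm\|\n'^*\alpha\|^2$ cancel, leaving $\int_{\mc{X}_y}(|\mu|^2_{R^*}-|\o{\n}\mu|^2)|u|^2e^{-k\phi}$.

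The hard part will be the third paragraph: carrying the Christoffel terms, signs, and factors of $\sqrt{-1}$ correctly through the integrations by parts and the Kähler commutators, and especially pinning down the sign in $\n'^*(dv^t\wedge\alpha)=-dv^t\wedge\n'^*\alpha-(\cdots)$. It is exactly this sign, together with the symmetry of $\n_{\b{l}}\mu^i_{\b{j}}$ in $l,j$ (which in turn comes from $\b{\p}\mu=0$), that makes the spurious $\|\n'^*\alpha\|^2$ cancel rather than double — with the opposite convention one would wrongly pick up an extra $-2\|\n'^*\alpha\|^2$. One must also check that no curvature contributions survive beyond the single $R^*$ term, and that the leftover quadratic-in-$\n\mu$ pieces reassemble into exactly $\|\n'^*\alpha\|^2$ and $\int_{\mc{X}_y}|\o{\n}\mu|^2|u|^2e^{-k\phi}$ and not into some other metric contraction of $\n_{\b{l}}\mu^i_{\b{j}}$ with itself.
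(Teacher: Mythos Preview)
Your proposal is correct and follows essentially the same route as the paper: both apply Lemma~\ref{lemma1} to $\alpha=i_\mu u$, compute $\langle R^*\alpha,\alpha\rangle=\int|\mu|^2_{R^*}|u|^2e^{-k\phi}$ directly, and reduce $\langle(dv^t)^*\n'\n_{\p/\p\b v^t}\alpha,\alpha\rangle$ to $\langle\n_{\p/\p\b v^t}\alpha,\n'^*(dv^t\wedge\alpha)\rangle$, using the symmetry $\n_{\b l}\mu^i_{\b j}=\n_{\b j}\mu^i_{\b l}$ to identify the result with $-\int|\o\n\mu|^2|u|^2e^{-k\phi}$.

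The one organisational difference is this: the paper begins by observing $i_\mu u=\n'^*(\phi_{\alpha\b j}\zeta^\alpha d\b v^j\wedge u)$, so that $\n'^*\alpha=0$ and hence $\Delta'\alpha=\n'^*\n'\alpha$ outright; no $\n'\n'^*$ term ever enters, and $\n'^*(dv^t\wedge\alpha)$ is just $-\b\p(\phi^{s\b l}\mu^t_{\b l}\,i_{\p/\p v^s}u)$. You instead carry the piece $-\|\n'^*\alpha\|^2$ from $-\n'\n'^*$ and the piece $-dv^t\wedge\n'^*\alpha$ inside $\n'^*(dv^t\wedge\alpha)$, and then arrange the cancellation via $(dv^t)^*\n_{\p/\p\b v^t}\alpha=-\n'^*\alpha$. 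That cancellation is genuine, but since in fact $\n'^*\alpha=0$, both extra terms are zero from the outset; recognising this up front, as the paper does, avoids the delicate sign check you flag in your last paragraph.
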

\begin{proof}
By a direct computation, we find
\begin{align*}
i_{\mu}u &=(-\p_{\b{l}}(\phi^{i\b{j}}\phi_{\alpha\b{j}})\zeta^{\alpha})d\b{v}^l\wedge i_{\frac{\p}{\p v^i}}(u'dv\otimes e^k)\\
&=\p_{\b{l}}(\phi^{i\b{j}}\phi_{\alpha\b{j}})\zeta^{\alpha}u'd\b{v}^l\wedge(-1)^idv^1\wedge \cdots\widehat{dv^i}\cdots \wedge dv^n\otimes e^k\\
	&=\n'^*(\phi_{\alpha\b{j}}\zeta^{\alpha}u'd\b{v}^j\wedge dv\otimes e^k)\\
	&=\n'^*(\phi_{\alpha\b{j}}\zeta^{\alpha}d\b{v}^j\wedge u).
\end{align*}
It follows that
  \begin{align}\label{2.3}
  	\Delta'i_{\mu}u=(\n'^*\n'+\n'\n'^*) i_{\mu}u=\n'^*\n' i_{\mu}u.  \end{align}
Thus,  using Lemma \ref{lemma1} and (\ref{2.3}), we obtain
\begin{align}\label{2.11}
\begin{split}
\langle(k-\Delta')i_{\mu}u,i_{\mu}u\rangle &=\langle(k-\n'^*\n)i_{\mu}u,i_{\mu}u\rangle\\
&=\langle(dv^t)^*\n'(\n_{\frac{\p}{\p\b{v}^t}}i_{\mu}u)+R^*i_{\mu}u, i_{\mu}u\rangle\\
&=\langle R^*i_{\mu}u,i_{\mu}u\rangle+\langle \n_{\frac{\p}{\p\b{v}^t}}i_{\mu}u,\n'^*(dv^t\wedge i_{\mu}u)\rangle.	
\end{split}
\end{align}
In terms of local coordinates the first term is
\begin{align}\label{2.13}
\begin{split}
	\langle R^*i_{\mu}u,i_{\mu}u\rangle &= \langle R^{\b{t}j}_{i\b{l}}i_{\frac{\p}{\p v^j}}dv^i\wedge d\b{v}^l\wedge i_{\frac{\p}{\p\b{v}^t}}\mu^s_{\b{q}}d\b{v}^q\wedge i_{\frac{\p}{\p v^s}}u,  i_{\mu}u\rangle\\
	&=\langle R^{\b{t}j}_{i\b{l}}\mu^i_{\b{t}}d\b{v}^l\wedge i_{\frac{\p}{\p v^j}}u, \mu^s_{\b{k}}d\b{v}^k\wedge i_{\frac{\p}{\p v^s}}u\rangle\\
	&=\int_{\mc{X}_y}(R^{\b{t}j}_{i\b{l}}\mu^i_{\b{t}}\o{\mu^s_{\b{k}}}\phi^{\b{l}k}\phi_{j\b{s}})|u|^2 e^{-k\phi}\\
	&=\int_{\mc{X}_y}(R^{\b{t}j}_{i\b{l}}\mu^i_{\b{t}}\o{\mu^l_{\b{j}}})|u|^2e^{-k\phi}=\int_{\mc{X}_y}|\mu|^2_{R^*}|u|^2e^{-k\phi}
\end{split}	
\end{align}
where the fourth equality follows from the definition of $\mu$ (\ref{definition of mu}) and 
\begin{align}\label{2.12}
\begin{split}
	\o{\mu^s_{\b{k}}}\phi^{\b{l}k}\phi_{j\b{s}}&=-\o{\zeta^{\alpha}}\p_k(\phi_{\b{\alpha}i}\phi^{i\b{s}})\phi^{\b{l}k}\phi_{j\b{s}}\\
	&=-\o{\zeta^{\alpha}}(\phi_{\b{\alpha}ik}\phi^{i\b{s}}+\phi_{\b{\alpha}i}\phi^{i\b{s}}_k)\phi^{\b{l}k}\phi_{j\b{s}}\\
	&=-\o{\zeta^{\alpha}}\phi_{\b{\alpha}ik}\phi^{i\b{s}}\phi^{\b{l}k}\phi_{j\b{s}}+\o{\zeta^{\alpha}}\phi_{\b{\alpha}i}\phi^{i\b{s}}\phi_{kj\b{s}}\phi^{\b{l}k}\\
	&=-\o{\zeta^{\alpha}}\phi_{\b{\alpha}jk}\phi^{\b{l}k}-\o{\zeta^{\alpha}}\phi_{\b{\alpha}i}\phi^{i\b{s}}_j\phi_{k\b{s}}\phi^{\b{l}k}\\
	&=-\o{\zeta^{\alpha}}\phi_{\b{\alpha}ji}\phi^{\b{l}i}-\o{\zeta^{\alpha}}\phi_{\b{\alpha}i}\phi^{i\b{l}}_j\\
	&=-\o{\zeta^{\alpha}}\p_j(\phi_{\b{\alpha}i}\phi^{\b{l}i})=\o{\mu^l_{\b{j}}}.
	\end{split}
\end{align}
The second term in the RHS of (\ref{2.11}) is
\begin{align}\label{2.14}
\begin{split}
	&\quad\langle \n_{\frac{\p}{\p\b{v}^t}}i_{\mu}u,\n'^*(dv^t\wedge i_{\mu}u)\rangle\\
	&=\langle(\n_{\b{t}}\mu^i_{\b{j}})d\b{v}^j\wedge i_{\frac{\p}{\p v^i}}u,-\b{\p}(\phi^{s\b{l}}\mu^t_{\b{l}})i_{\frac{\p}{\p v^s}}u\rangle\\
	&=-\langle(\b{\p}(\phi^{s\b{l}}\mu^t_{\b{l}})i_{\frac{\p}{\p v^s}})^*(\n_{\b{t}}\mu^i_{\b{j}})d\b{v}^j\wedge i_{\frac{\p}{\p v^i}}u,u\rangle\\
	&=-\langle\phi_{i\b{s}}\p_k(\phi^{\b{s}l}\o{\mu^t_{\b{l}}})\phi^{\b{j}k}(\n_{\b{t}}\mu^i_{\b{j}})u,u\rangle\\
	&=\int_{\mc{X}_y}-\n_k\o{\mu^t_{\b{i}}}\phi^{\b{j}k}\n_{\b{t}}\mu^i_{\b{j}}|u|^2e^{-k\phi}\\
	&=\int_{\mc{X}_y}-\n_k\o{\mu^t_{\b{i}}}\phi^{\b{j}k}\n_{\b{j}}\mu^i_{\b{t}}|u|^2e^{-k\phi}\\
	&=\int_{\mc{X}_y}-\n_k\o{\mu^t_{\b{i}}}\phi^{\b{j}k}\n_{\b{j}}\mu^s_{\b{q}}\phi^{\b{q}i}\phi_{s\b{t}}|u|^2e^{-k\phi}=\int_{\mc{X}_y}-|\o{\n}\mu|^2|u|^2e^{-k\phi},
\end{split}	
\end{align}
where in the second equality, $(\bullet)^*$ denotes the adjoint operator of $\bullet$, the fifth equality follows from 
\begin{align*}
	\n_{\b{t}}\mu^i_{\b{j}}=\n_{\b{t}}(\p_{\b{j}}(\phi_{\alpha\b{l}}\phi^{\b{l}i})\zeta^{\alpha})=\p_{\b{t}}\p_{\b{j}}(\phi_{\alpha\b{l}}\phi^{\b{l}i})\zeta^{\alpha}-\o{\Gamma^s_{tj}}\mu^i_{\b{s}}=\n_{\b{j}}\mu^i_{\b{t}}.
\end{align*}
The sixth equality holds by (\ref{2.12}) and $\n_{\b{j}}(\phi_{s\b{t}})=\n_{\b{j}}(\phi^{\b{q}i})=0$.
Substituting (\ref{2.13}) and (\ref{2.14}) into (\ref{2.11}) we have 
	$$\langle(k-\Delta')i_{\mu}u,i_{\mu}u\rangle=\int_{\mc{X}_y}(|\mu|^2_{R^*}-|\o{\n}\mu|^2)|u|^2e^{-k\phi}.$$
\end{proof}

In the subsequent text we shall
write  $O(k^{j})$ for any term that is of the order $k^j$ and is independent
of $u$.

\begin{lemma}\label{lemma2} We have the following expansion
$$\langle(k-\Delta')^2i_{\mu}u,i_{\mu}u\rangle=\int_{\mc{X}_y}(k|\o{\n}\mu|^2+O(1))|u|^2e^{-k\phi}.$$
\end{lemma}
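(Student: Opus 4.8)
The goal is to compute $\langle(k-\Delta')^2 i_\mu u, i_\mu u\rangle$ up to $o(k)$, and the natural strategy is to iterate the argument of Lemma \ref{lemma4}. Write $(k-\Delta')^2 = (k-\Delta')(k-\Delta')$ and think of the inner $(k-\Delta')$ as already analyzed: by (\ref{2.3}) we have $\Delta' i_\mu u = \n'^*\n' i_\mu u$, and by Lemma \ref{lemma1}, $(k-\n'^*\n'-R^*)i_\mu u = (dv^t)^*\n'(\n_{\frac{\p}{\p\b v^t}} i_\mu u)$. So $(k-\Delta')i_\mu u = R^* i_\mu u + (dv^t)^*\n'(\n_{\frac{\p}{\p\b v^t}} i_\mu u)$. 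The plan is to substitute this expression into both slots of $\langle(k-\Delta')(\cdot),(\cdot)\rangle$ — or rather, to apply $(k-\Delta')$ once more to $(k-\Delta')i_\mu u$ and then pair against $i_\mu u$ — and to track which resulting terms carry a factor of $k$ and which do not.

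First I would pair: $\langle(k-\Delta')^2 i_\mu u, i_\mu u\rangle = \langle (k-\Delta')i_\mu u, (k-\Delta')i_\mu u\rangle$, since $\Delta'$ is self-adjoint on the relevant space of forms. This is the cleanest route: it reduces the problem to computing the $L^2$-norm squared of $(k-\Delta')i_\mu u = R^* i_\mu u + (dv^t)^*\n'\n_{\frac{\p}{\p\b v^t}} i_\mu u$. Expanding the norm gives three pieces: $\|R^* i_\mu u\|^2$, the cross term $2\Re\langle R^* i_\mu u, (dv^t)^*\n'\n_{\frac{\p}{\p\b v^t}} i_\mu u\rangle$, and $\|(dv^t)^*\n'\n_{\frac{\p}{\p\b v^t}} i_\mu u\|^2$. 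The first two pieces involve $R^*$, which is a bounded zeroth-order operator with coefficients independent of $k$, and at most one factor of $\n'$ hitting $i_\mu u$; since $i_\mu u$ is a fixed $\b\p$-closed primitive form and the weight $e^{-k\phi}$ contributes the $L^2$-structure, these contribute only $O(1)$ relative to $|u|^2 e^{-k\phi}$ — the key point being that no factor of $k$ is produced because there is no "free" commutator $[\frac{\p}{\p v},\n'] = -k\p\phi$ left uncontracted.

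The term that produces the leading $k|\o\n\mu|^2$ is $\|(dv^t)^*\n'\n_{\frac{\p}{\p\b v^t}} i_\mu u\|^2$. Here I would integrate by parts to move one $(dv^t)^*\n'$ across: $\langle (dv^t)^*\n'\n_{\frac{\p}{\p\b v^t}} i_\mu u, (dv^s)^*\n'\n_{\frac{\p}{\p\b v^s}} i_\mu u\rangle = \langle \n'\n_{\frac{\p}{\p\b v^t}} i_\mu u, (dv^t\wedge)(dv^s)^*\n'\n_{\frac{\p}{\p\b v^s}} i_\mu u\rangle$ and then use $\n'^*(dv^t\wedge\cdot)$ together with the Bochner–Kodaira type commutation $[\frac{\p}{\p v^t}, \n'] = -k\p\phi_{\b t}$, which is precisely where the factor $k$ enters. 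The surviving leading term is $k \int_{\mc X_y}\|\n_{\b t}\mu^i_{\b j}\|^2 |u|^2 e^{-k\phi} = k\int_{\mc X_y}|\o\n\mu|^2|u|^2 e^{-k\phi}$ after the same index manipulations (\ref{2.12}) and the symmetry $\n_{\b t}\mu^i_{\b j} = \n_{\b j}\mu^i_{\b t}$ used in Lemma \ref{lemma4}; all other contributions from this commutation have coefficients independent of $k$ and hence are absorbed into $O(1)$.

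The main obstacle I anticipate is the bookkeeping in the cross term and in the subleading part of the third term: one must verify carefully that every commutator of the form $[\frac{\p}{\p\b v}, \b\p]$, $[\n', \Lambda]$, or Christoffel-symbol correction in $\n_{\frac{\p}{\p\b v^t}}$ either vanishes on $\b\p$-closed forms or contributes a $k$-independent coefficient, so that nothing of order $k$ is missed and nothing of order $k^2$ secretly survives (it cannot, since each of the two copies of $(k-\Delta')i_\mu u$ contains at most one hidden $k$ from a single $[\frac{\p}{\p v},\n']$, and pairing two first-order-in-$k$ quantities against a $k$-independent inner product gives $k^2$ only if both $k$'s appear in the same monomial — which the integration by parts shows does not happen, because the $R^*$-free leading piece of $(k-\Delta')i_\mu u$ is genuinely $O(k^{1/2})$ in $L^2$, being $(dv^t)^*\n'(\n_{\b t} i_\mu u)$ with bounded symbol applied to $i_\mu u$). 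Concretely, I would organize the computation so that $(k-\Delta')i_\mu u$ is always written in the manifestly-$O(k^{1/2})$ form $R^* i_\mu u + (dv^t)^*\n'\n_{\frac{\p}{\p\b v^t}} i_\mu u$ before any further estimate, and then the only place $k$ reappears is the single controlled commutation above, making the $k|\o\n\mu|^2 + O(1)$ structure transparent.
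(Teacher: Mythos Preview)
Your proposal is correct and follows essentially the same approach as the paper: write $\langle(k-\Delta')^2 i_\mu u,i_\mu u\rangle=\|(k-\Delta')i_\mu u\|^2$, decompose $(k-\Delta')i_\mu u=R^*i_\mu u+(dv^t)^*\n'\n_{\frac{\p}{\p\b v^t}}i_\mu u$ via Lemma~\ref{lemma1}, observe that the $R^*$-pieces contribute $O(1)$, and extract the single factor of $k$ from the remaining term by integrating by parts. The only cosmetic difference is that the paper phrases the extraction of $k$ as a second application of Lemma~\ref{lemma1} (i.e.\ $\n'^*\n'=k-(dv^l)^*\n'\n_{\frac{\p}{\p\b v^l}}-R^*$ on $A^{n-1,1}$) rather than invoking the commutator $[\n_{\frac{\p}{\p\b v^t}},\n']=k\,\p\phi_{\b t}$ directly, but these are the same identity in two guises.
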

\begin{proof} Writing $k-\Delta'=(k-\Delta'- R^*) + R^*$ and using Lemma \ref{lemma1} we have
	\begin{align}\label{2.15}
	\begin{split}
	&\quad \langle(k-\Delta')^2i_{\mu}u,i_{\mu}u\rangle =\|(k-\Delta')i_{\mu}u\|^2\\
	&=\|\phi^{s\b{t}}i_{\frac{\p}{\p v^s}}\n'(\n_{\frac{\p}{\p\b{v}^t}}i_{\mu}u)+R^*\alpha\|^2\\
	&=\|\phi^{s\b{t}}i_{\frac{\p}{\p v^s}}\n'(\n_{\frac{\p}{\p\b{v}^t}}i_{\mu}u)\|^2+\|R^*i_{\mu}u\|^2+2\text{Re}\langle\phi^{s\b{t}}i_{\frac{\p}{\p v^s}}\n'(\n_{\frac{\p}{\p\b{v}^t}}i_{\mu}u),R^*i_{\mu}u\rangle\\
	&=\langle\n'(\n_{\frac{\p}{\p\b{v}^j}}i_{\mu}u), dv^j\wedge\phi^{s\b{t}}i_{\frac{\p}{\p v^s}}\n'(\n_{\frac{\p}{\p\b{v}^t}}i_{\mu}u)\rangle+\|R^*i_{\mu}u\|^2\\
	&\quad+2\text{Re}\langle\n'(\n_{\frac{\p}{\p\b{v}^t}}i_{\mu}u),dv^t\wedge R^*i_{\mu}u\rangle\\
	&=\langle\n_{\frac{\p}{\p\b{v}^j}}i_{\mu}u, \n'^*(\phi^{j\b{t}}\n'(\n_{\frac{\p}{\p\b{v}^t}}i_{\mu}u))\rangle+\|R^*i_{\mu}u\|^2+2\text{Re}\langle\n_{\frac{\p}{\p\b{v}^t}}i_{\mu}u,\n'^*(dv^t\wedge R^*i_{\mu}u)\rangle.\\
	\end{split}
	\end{align}
For the first term in the RHS of (\ref{2.15}), we have
\begin{align}\label{2.16}
\begin{split}
	&\quad\langle\n_{\frac{\p}{\p\b{v}^j}}i_{\mu}u, \n'^*(\phi^{j\b{t}}\n'(\n_{\frac{\p}{\p\b{v}^t}}i_{\mu}u))\rangle \\
	&=\langle\n_{\frac{\p}{\p\b{v}^j}}i_{\mu}u, (\phi^{j\b{t}}\n'^*-\phi^{s\b{l}}\phi^{j\b{t}}_{\b{l}}i_{\frac{\p}{\p v^s}})\n'(\n_{\frac{\p}{\p\b{v}^t}}i_{\mu}u)\rangle\\
	&=-\langle\n'^*(\phi^{\b{j}t}_l dv^l\wedge \n_{\frac{\p}{\p\b{v}^j}}i_{\mu}u),\n_{\frac{\p}{\p\b{v}^t}}i_{\mu}u\rangle+\langle\n_{\frac{\p}{\p\b{v}^j}}i_{\mu}u, \phi^{j\b{t}}\n'^*\n'(\n_{\frac{\p}{\p\b{v}^t}}i_{\mu}u)\rangle.
	\end{split}
\end{align}
For the second term in RHS of (\ref{2.16}), using Lemma \ref{lemma1}, we obtain
\begin{align}\label{2.17}
\begin{split}
	&\quad\langle\n_{\frac{\p}{\p\b{v}^j}}i_{\mu}u, \phi^{j\b{t}}\n'^*\n'(\n_{\frac{\p}{\p\b{v}^t}}i_{\mu}u)\rangle \\
	&=\langle\n_{\frac{\p}{\p\b{v}^j}}i_{\mu}u, \phi^{j\b{t}}(k-\phi^{s\b{l}}i_{\frac{\p}{\p v^s}}\n'\n_{\frac{\p}{\p\b{v}^l}}-R^*)(\n_{\frac{\p}{\p\b{v}^t}}i_{\mu}u)\rangle\\
	&=\int_{\mc{X}_y}k\phi^{\b{j}t}\phi^{\b{l}s}\phi_{k\b{i}}\n_{\b{j}}\mu^k_{\b{l}}\n_t\o{\mu^i_{\b{s}}}|u|^2e^{-\phi}-\langle\n'^*(dv^l\wedge \n_{\frac{\p}{\p\b{v}^j}}i_{\mu}u),\n_{\frac{\p}{\p\b{v}^j}}\n_{\frac{\p}{\p\b{v}^t}}i_{\mu}u\rangle\\
	&\quad -\langle\n_{\frac{\p}{\p\b{v}^j}}i_{\mu}u, \phi^{j\b{t}}R^*(\n_{\frac{\p}{\p\b{v}^t}}i_{\mu}u)\rangle.
	\end{split}
\end{align}
We substitute (\ref{2.16}) and  (\ref{2.17}) into (\ref{2.15}), 
\begin{align*}
&\quad \langle(k-\Delta')^2i_{\mu}u,i_{\mu}u\rangle =\|R^*i_{\mu}u\|^2+2\text{Re}\langle\n_{\frac{\p}{\p\b{v}^t}}i_{\mu}u,\n'^*(dv^t\wedge R^*i_{\mu}u)\rangle	\\
&-\langle\n'^*(\phi^{\b{j}t}_l dv^l\wedge \n_{\frac{\p}{\p\b{v}^j}}i_{\mu}u),\n_{\frac{\p}{\p\b{v}^t}}i_{\mu}u\rangle
-\langle\n'^*(dv^l\wedge \n_{\frac{\p}{\p\b{v}^j}}i_{\mu}u),\n_{\frac{\p}{\p\b{v}^j}}\n_{\frac{\p}{\p\b{v}^t}}i_{\mu}u\rangle\\
&-\langle\n_{\frac{\p}{\p\b{v}^j}}i_{\mu}u, \phi^{j\b{t}}R^*(\n_{\frac{\p}{\p\b{v}^t}}i_{\mu}u)\rangle+\int_{\mc{X}_y}k\phi^{\b{j}t}\phi^{\b{l}s}\phi_{k\b{i}}\n_{\b{j}}\mu^k_{\b{l}}\n_t\o{\mu^i_{\b{s}}}|u|^2e^{-k\phi}\\
&=\int_{\mc{X}_y}(k|\o{\n}\mu|^2+O(1))|u|^2e^{-k\phi}.
\end{align*}
This completes the proof.
\end{proof}
 
\begin{lemma}\label{lemma3} The quadratic form $\langle (k-\Delta')^3i_{\mu}u, i_{\mu}u\rangle $ has 
the following expansion
$$\langle (k-\Delta')^3i_{\mu}u, i_{\mu}u\rangle=\int_{\mc{X}_y}(-k^2|\o{\n}u|^2+O(k))|u|^2e^{-k\phi}.$$
\end{lemma}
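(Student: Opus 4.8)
I would imitate the proofs of Lemmas \ref{lemma4} and \ref{lemma2}, pushing the same manipulations one order further, and reduce everything to a single explicit integral. First note that $i_\mu u\in A^{n-1,1}(\mc X_y,L^k)$ is $\n'^*$-closed (it equals $\n'^*(\phi_{\alpha\b j}\zeta^\alpha d\b v^j\wedge u)$, cf. the proof of Lemma \ref{lemma4}), and since $\Delta'$ commutes with $\n'^*$ the form $w:=(k-\Delta')i_\mu u$ is again $\n'^*$-closed, so $\Delta' w=\n'^*\n' w$. Using that $k-\Delta'$ is self-adjoint,
$$\langle(k-\Delta')^3 i_\mu u,i_\mu u\rangle=\langle(k-\Delta')w,w\rangle=k\|w\|^2-\|\n' w\|^2 .$$
By Lemma \ref{lemma2}, $k\|w\|^2=k\langle(k-\Delta')^2 i_\mu u,i_\mu u\rangle=\int_{\mc X_y}\big(k^2|\o\n\mu|^2+O(k)\big)|u|^2 e^{-k\phi}$, so the statement is equivalent to the estimate $\|\n' w\|^2=\int_{\mc X_y}\big(2k^2|\o\n\mu|^2+O(k)\big)|u|^2 e^{-k\phi}$.

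To prove this I would make $w$ and $\n' w$ fully explicit. By Lemma \ref{lemma1} together with (2.3), $w=(dv^t)^*\n'\big(\nabla_{\frac{\p}{\p\b v^t}}i_\mu u\big)+R^* i_\mu u$; on the fibre $\nabla_{\frac{\p}{\p\b v^t}}(i_\mu u)=(\nabla_{\b t}\mu^i_{\b j})\,d\b v^j\wedge i_{\frac{\p}{\p v^i}}u$ as in the proof of Lemma \ref{lemma4}, while the Clifford-type identity $dv^s\wedge i_{\frac{\p}{\p v^i}}u=\delta^s_i u$ (valid since $A^{n+1,0}=0$) gives $\n'\big(i_{\frac{\p}{\p v^i}}u\big)=k\phi_i u+(\text{a term bounded independently of }k)$. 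Substituting these into $w$, and using $(\n')^2=0$ and $\n'(d\b v^j)=0$, one isolates the part of $w$ proportional to $k$; computing $\n' w$ by the same substitutions, the $k^2$-contribution to $\|\n' w\|^2$ arises from squaring this part and reassembling the $\phi_{i\b j}$- and $\phi^{i\b j}$-contractions into $\int|\o\n\mu|^2|u|^2 e^{-k\phi}$, exactly as in (2.13)--(2.14). All the remaining pieces — those produced by the commutators $[\n',i_{\frac{\p}{\p v^i}}]$ and $[\nabla_{\b t},\n']$, by $R^*$, and the cross terms — are $O(k)$; they are controlled by combining $\|w\|^2=O(k)$ (Lemma \ref{lemma2}), the identity $\n'^*\n'\beta=k\beta-R^*\beta-(dv^q)^*\n'\nabla_{\frac{\p}{\p\b v^q}}\beta$ of Lemma \ref{lemma1}, and the fact that every iterated $(0,1)$-covariant derivative of $i_\mu u$ is pointwise bounded, independently of $k$, by $C|u|$.

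The main obstacle is precisely this bookkeeping. Several intermediate terms, estimated crudely, would be of size $k^{3/2}$ or would carry a wrong numerical coefficient, so one must exhibit the cancellations — the same ones that already force $\|w\|=O(\sqrt k)$ rather than $O(k)$ in Lemma \ref{lemma2} — and keep careful track of signs and of the precise metric contractions, so that the surviving term is exactly $2k^2\int_{\mc X_y}|\o\n\mu|^2|u|^2 e^{-k\phi}$ and everything else falls into the remainder $O(k)$. Equivalently, applying Lemma \ref{lemma1} once more to $w$ reduces the problem to showing $\langle\nabla_{\frac{\p}{\p\b v^t}}w,\n'^*(dv^t\wedge w)\rangle=-k^2\int_{\mc X_y}|\o\n\mu|^2|u|^2 e^{-k\phi}+O(k)$, which is the analogue for $w$ of formula (2.14) and which one computes by the same procedure.
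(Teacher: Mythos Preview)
Your reduction $\langle(k-\Delta')^3 i_\mu u,i_\mu u\rangle=k\|w\|^2-\|\n' w\|^2$ with $w=(k-\Delta')i_\mu u$ is correct, and together with Lemma~\ref{lemma2} it does reduce the problem to $\|\n' w\|^2=\int_{\mc X_y}(2k^2|\o\n\mu|^2+O(k))|u|^2 e^{-k\phi}$. However this is a genuinely different organization from the paper's, and it trades one computation for a harder one. The paper never splits into $k\|w\|^2-\|\n' w\|^2$; instead it writes $(k-\Delta')^3=\big((dv^t)^*\n'\n_{\b t}+R^*\big)^3$, argues that every term containing an $R^*$ is $O(k)$ (by the same mechanism as in Lemma~\ref{lemma2}), and then handles the single surviving term $\langle(dv^t)^*\n'\n_{\b t}(dv^l)^*\n'\n_{\b l}(dv^s)^*\n'\n_{\b s}\,i_\mu u,\,i_\mu u\rangle$ by commuting each $\n_{\b t}$ past the adjacent $\n'$ using $[\n_{\b t},\n']=k\,\p\phi_{\b t}\wedge$. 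Two applications of this commutator produce the factor $k^2$ directly, with the sign $-1$ appearing in one step when the resulting zeroth-order operator is paired against $i_\mu u$. No cancellation of the type $1-2=-1$ is needed; the coefficient is read off, not assembled.

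Your route, by contrast, must produce the coefficient $2$ in $\|\n' w\|^2$ explicitly, which is exactly the delicate bookkeeping you flag. One specific point to fix: the assertion ``$\n'(i_{\frac{\p}{\p v^i}}u)=k\phi_i u+(\text{a term bounded independently of }k)$'' is not right as stated. The extra piece is $\p_i u'\,dv\otimes e^k$, which is \emph{not} pointwise bounded uniformly in $k$ (sections of $L^k$ and their derivatives grow with $k$); it only becomes lower order after a further integration by parts that moves the holomorphic derivative onto the other factor. This is precisely the mechanism that the paper's commutator identity $[\n_{\b t},\n']=k\,\p\phi_{\b t}$ packages cleanly. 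So your strategy can be made to work, but to close it you would effectively have to redo the paper's operator commutations inside the computation of $\|\n' w\|^2$; going through the triple-product expansion from the start is shorter.
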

\begin{proof}
Denote $\n_{\b{t}}=\n_{\frac{\p}{\p\b{v}^t}}$.  We have, using Lemma \ref{lemma1}, that
\begin{align}\label{2.18}
\begin{split}
&\quad \langle(k-\Delta')^3 i_{\mu}u,i_{\mu}u\rangle	\\
&=\langle((dv^t)^*\n'\n_{\b{t}}+R^*)((dv^l)^*\n'\n_{\b{l}}+R^*)((dv^s)^*\n'\n_{\b{s}}+R^*)i_{\mu}u,i_{\mu}u\rangle\\
&=O(k)+\langle(dv^t)^*\n'\n_{\b{t}}(dv^l)^*\n'\n_{\b{l}}(dv^s)^*\n'\n_{\b{s}}i_{\mu}u,i_{\mu}u\rangle\\
&=O(k)+\langle(dv^t)^*\n'(dv^l)^*\n_{\b{t}}\n'(dv^s)^*\n_{\b{l}}\n'\n_{\b{s}}i_{\mu}u,i_{\mu}u\rangle\\
&=O(k)+k^2\langle (dv^t)^*\n'(dv^l)^*\p\phi_{\b{t}}\wedge (dv^s)^*\p\phi_{\b{l}}\wedge \n_{\b{s}}i_{\mu}u,i_{\mu}u\rangle\\
&=O(k)+k^2\langle\p\phi_{\b{t}}\wedge (dv^s)^*\p\phi_{\b{l}}\wedge \n_{\b{s}}i_{\mu}u, dv^l\wedge \n'^*(dv^t\wedge i_{\mu}u)\rangle\\
&=O(k)+k^2\langle-\phi_{i\b{l}}(\n_{\b{t}}\mu^i_{\b{j}})d\b{v}^j u, \b{\p}(\phi^{l\b{m}}\mu^t_{\b{m}})u\rangle\\
&=\int_{\mc{X}_y}(-k^2|\o{\n}\mu|^2+O(k))|u|^2e^{-k\phi},
\end{split}
\end{align} 
	where the second equality holds because all the terms  in $$\langle((dv^t)^*\n'\n_{\b{t}}+R^*)((dv^l)^*\n'\n_{\b{l}}+R^*)((dv^s)^*\n'\n_{\b{s}}+R^*)i_{\mu}u,i_{\mu}u\rangle$$ containing the factor $R^*$ are treated
similarly as $\langle(k-\Delta')^2i_{\mu}u,i_{\mu}u\rangle$, which are in $O(k)$, the third equality holds since 
	$[\n_{\b{t}}, (dv^l)^*]=\p_{\b{t}}(\phi^{\b{l}i})i_{\frac{\p}{\p v^i}}$, and so its adjoint operator is in $O(1)$, the fourth equality follows from $[\n_{\b{t}}, \n']=k\p\phi_{\b{t}}$. 
\end{proof}

\begin{lemma}\label{lemma5} The  following expansion holds
$$\langle(k-\Delta')^4 i_{\mu}u,i_{\mu}u\rangle=\int_{\mc{X}_y}(k^3|\o{\n}\mu|^2+O(k^2))|u|^2e^{-k\phi}.$$	
\end{lemma}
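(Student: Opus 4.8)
The plan is to argue by the same bootstrapping scheme already used in Lemmas \ref{lemma2} and \ref{lemma3}, writing $k-\Delta'=(k-\Delta'-R^*)+R^*=(dv^t)^*\n'\n_{\b{t}}+R^*$ (Lemma \ref{lemma1}) and iterating this identity four times inside $\langle(k-\Delta')^4 i_\mu u, i_\mu u\rangle=\|(k-\Delta')^2 i_\mu u\|^2$. Since $R^*$ is a zeroth-order (bounded, $u$-independent) operator, any term in the expansion that contains at least one factor of $R^*$ produces, after moving $\n'$ across by adjunction, a quadratic form in $\n_{\b{t}}i_\mu u$ (or fewer derivatives) against something with at most the top power $k^2$ in front, hence contributes $O(k^2)$; these are handled exactly as the $R^*$-terms were dispatched in the proof of Lemma \ref{lemma3}. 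So the only term that must be computed precisely is the one with no $R^*$ at all,
\begin{equation*}
\langle (dv^t)^*\n'\n_{\b t}(dv^l)^*\n'\n_{\b l}(dv^p)^*\n'\n_{\b p}(dv^q)^*\n'\n_{\b q} i_\mu u, i_\mu u\rangle.
\end{equation*}

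First I would push all the $(dv^\bullet)^*$ and $\n_{\b\bullet}$ operators into normal order using the three commutator facts already invoked in Lemma \ref{lemma3}: $[\n_{\b t},(dv^l)^*]=\p_{\b t}(\phi^{\b l i})i_{\p/\p v^i}$ is $O(1)$, $[\n_{\b t},\n']=k\,\p\phi_{\b t}\wedge$ is the only place where a power of $k$ is generated, and the adjoint relations $\langle (dv^t)^*\alpha,\beta\rangle=\langle \alpha, dv^t\wedge\beta\rangle$, $\langle\n'\alpha,\beta\rangle=\langle\alpha,\n'^*\beta\rangle$. Each time one commutes a $\n_{\b\bullet}$ past a $\n'$ one gains a factor $k\,\p\phi_{\b\bullet}\wedge$; since there are effectively three such crossings available in the no-$R^*$ term, the leading behaviour is $k^3$ times a quadratic expression in the single covariant derivative $\n_{\b t}i_\mu u$, and every rearrangement that instead uses a $[\n_{\b t},(dv^l)^*]$ commutator costs one power of $k$ and lands in the $O(k^2)$ bucket. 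Collecting the $k^3$ part exactly as in (\ref{2.18}) — i.e. $k^3\langle \p\phi_{\b t}\wedge(dv^p)^*\p\phi_{\b l}\wedge(dv^q)^*\p\phi_{\b s}\wedge\cdots\rangle$ reduced by the $\n'^*$/$\n'$ adjunction to $\int_{\mc X_y}\phi_{i\b l}(\n_{\b t}\mu^i_{\b j})\o{\cdots}$ — gives precisely $k^3|\o\n\mu|^2$ against $|u|^2 e^{-k\phi}$, using the symmetry $\n_{\b t}\mu^i_{\b j}=\n_{\b j}\mu^i_{\b t}$ and the contractions (\ref{2.12}) that already appeared in Lemma \ref{lemma4}.

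The main obstacle is purely bookkeeping: keeping track of which of the many terms arising from four iterations of $k-\Delta'=(dv^t)^*\n'\n_{\b t}+R^*$ genuinely contribute at order $k^3$ versus which collapse into $O(k^2)$. The cleanest way to control this is to observe inductively — essentially the content of Lemmas \ref{lemma2}, \ref{lemma3} chained together — that $(k-\Delta')^m i_\mu u = (-1)^{m+1}(\text{sign pattern})\,k^{m-1}\phi^{s\b t}i_{\p/\p v^s}\n'(\n_{\b t}i_\mu u)\cdot(\text{lower in }k) + (\text{terms of }L^2\text{-norm }O(k^{(m-1)/2})\text{-type}),$ so that the top symbol of $(k-\Delta')^m$ acting on $i_\mu u$ is always, up to sign, $k^{m-1}$ times the fixed first-order operator $\alpha\mapsto (dv^t)^*\n'(\n_{\b t}\alpha)$ evaluated on $i_\mu u$, whose associated quadratic form is $\int_{\mc X_y}|\o\n\mu|^2|u|^2 e^{-k\phi}$ by the computation (\ref{2.14}). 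Granting that structural claim, the present lemma is the case $m=4$, and the sign is $+$, giving the stated $\int_{\mc X_y}(k^3|\o\n\mu|^2+O(k^2))|u|^2 e^{-k\phi}$; no genuinely new estimate beyond those in Lemmas \ref{lemma1}--\ref{lemma3} is required.
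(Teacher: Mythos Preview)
Your proposal is correct and follows essentially the same approach as the paper's own proof: iterate $k-\Delta'=(dv^t)^*\n'\n_{\b t}+R^*$ four times, discard all $R^*$-terms into $O(k^2)$ by the same mechanism as in Lemma~\ref{lemma3}, and extract the leading $k^3$ from the three commutators $[\n_{\b\bullet},\n']=k\,\p\phi_{\b\bullet}\wedge$ in the pure term, reducing via adjunction to the computation (\ref{2.14}). The paper carries out exactly this chain of equalities explicitly (tracking an intermediate $-k^3$ that flips to $+k^3$ after the final contraction), while your inductive ``top symbol of $(k-\Delta')^m$'' remark is a convenient organizing observation not stated in the paper but entirely consistent with it.
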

\begin{proof}
	Similar to the proof in Lemma \ref{lemma3}
for  estimating reminder terms we have
	\begin{align}
	\begin{split}
		&\quad \langle(k-\Delta')^4 i_{\mu}u,i_{\mu}u\rangle	\\
&=\langle((dv^q)^*\n'\n_{\b{q}}+R^*)((dv^t)^*\n'\n_{\b{t}}+R^*)((dv^l)^*\n'\n_{\b{l}}+R^*)((dv^s)^*\n'\n_{\b{s}}+R^*)i_{\mu}u,i_{\mu}u\rangle\\
&=O(k^2)+\langle(dv^q)^*\n'\n_{\b{q}}(dv^t)^*\n'\n_{\b{t}}(dv^l)^*\n'\n_{\b{l}}(dv^s)^*\n'\n_{\b{s}}i_{\mu}u,i_{\mu}u\rangle\\
&=O(k^2)+\langle(dv^q)^*\n'(dv^t)^*\n_{\b{q}}\n'(dv^l)^*\n_{\b{t}}\n'(dv^s)^*\n_{\b{l}}\n'\n_{\b{s}}i_{\mu}u,i_{\mu}u\rangle\\
&=O(k^2)-k^3\langle (dv^q)^*\n'(dv^t)^*\p\phi_{\b{q}}\wedge(dv^l)^*\p\phi_{\b{t}}\wedge (dv^s)^*\p\phi_{\b{l}}\wedge \n_{\b{s}}i_{\mu}u,i_{\mu}u\rangle\\
&=O(k^2)-k^3\langle\p\phi_{\b{l}}\wedge\n_{\b{s}}i_{\mu}u, dv^s\wedge \n'^*(dv^l\wedge i_{\mu}u)\rangle\\
&=O(k^2)+k^3\langle\phi_{i\b{l}}(\n_{\b{s}}\mu^i_{\b{j}})d\b{v}^j u, \b{\p}(\phi^{s\b{m}}\mu^l_{\b{m}})u\rangle\\
&=\int_{\mc{X}_y}(k^3|\o{\n}\mu|^2+O(k^2))|u|^2e^{-k\phi}.
	\end{split}
	\end{align}
\end{proof}

We prove now Theorem \ref{main theorem 1}.
\begin{proof}
The curvature formula in (\ref{L2 curvature}) contains two quadratic forms in $u$. We treat first
 the second one defined by the resolvent $(\Delta '+k)^{-1}
$.
 We have, by the expansion (\ref{2.20})
$$
\langle (\Delta '+k)^{-1}i_\mu u, i_\mu u
\rangle 
=I(u) + II(u) +\cdots +  VII(u)
$$
$$
I(u)=
\langle I i_\mu u, i_\mu u\rangle, \qquad
II(u)=
\langle II i_\mu u, i_\mu u\rangle, \cdots, 
VII(u)=
\langle VII i_\mu u, i_\mu u\rangle.
$$

We shall treat each term using the lemmas above. 
First we have
\begin{align}\label{2.26}
I(u)=\langle\frac{1}{2k}i_{\mu}u,i_{\mu}u\rangle=\frac{1}{2k}\int_{\mc{X}_y}(\mu^i_{\b{j}}\o{\mu^s_{\b{t}}}\phi_{i\b{s}}\phi^{\b{j}t})|u|^2e^{-\phi}=\frac{1}{2k}\int_{\mc{X}_y}|\mu|^2|u|^2e^{-k\phi},	
\end{align}
where $|\mu|^2=\mu^i_{\b{j}}\o{\mu^s_{\b{t}}}\phi_{i\b{s}}\phi^{\b{j}t}$.

By Lemma \ref{lemma4} and (\ref{2.13}), the second term 
is 
\begin{align}\label{2.22}
II(u)=-\frac{1}{6k^2}\int_{\mc{X}_y}|\o{\n}\mu|^2|u|^2e^{-k\phi}.	
\end{align}
Likewise, by (\ref{2.13})
\begin{align}\label{2.27}
III(u)=\frac{1}{4k^2}\int_{\mc{X}_y}|\mu|^2_{R^*}|u|^2e^{-k\phi}.
\end{align}

The fourth term is
\begin{align}\label{2.28}
\begin{split}
IV(u)&\leq\frac{1}{4k^2}\|R^*i_{\mu}u\|\cdot\|(\Delta'+k)^{-1}(k-\Delta')i_{\mu}u\|\\
	&\leq \frac{1}{4k^3}\|R^*i_{\mu}u\|\cdot\langle(k-\Delta')^2i_{\mu}u,i_{\mu}u\rangle^{\frac{1}{2}}\\
	&=\frac{1}{4k^3}(O(1)\|\mu\|)\cdot(O(k^{\frac{1}{2}})\|u\|)\\
	&=O\left(\frac{1}{k^{\frac{5}{2}}}\right)\|u\|^2,
\end{split}	
\end{align}
where the third equality follows from Lemma \ref{lemma2}.

By Lemma \ref{lemma1}, Lemma \ref{lemma4} and Lemma \ref{lemma2}, 
the fifith term 
\begin{align}\label{2.21}
\begin{split}
	 V(u)
&=\frac{1}{18k^2}\langle(k-\Delta'-R^*)i_{\mu}u,i_{\mu}u\rangle+\frac{1}{18k^3}(\langle(k-\Delta')^2i_{\mu}u,i_{\mu}u\rangle\\
	&\quad-\langle(k-\Delta')R^*i_{\mu}u,i_{\mu}u\rangle)\\
	&=-\frac{1}{18k^2}\int_{\mc{X}_y}|\o{\n}\mu|^2|u|^2e^{-k\phi}+\frac{1}{18k^3}\int_{\mc{X}_y}(k|\o{\n}\mu|^2+O(1))|u|^2e^{-k\phi}\\
	&\quad-\frac{1}{18k^3}(\langle(R^*)^2i_{\mu}u,i_{\mu}u\rangle+\langle\n_{\frac{\p}{\p \b{v}^t}}R^*i_{\mu}u,\n'^*(dv^t\wedge i_{\mu}u)\rangle)\\
	&=O\left(\frac{1}{k^3}\right)\|u\|^2.
\end{split}	
\end{align}
By (\ref{2.22}), (\ref{2.21}), Lemma \ref{lemma3} and Lemma \ref{lemma2},
 the sixth term becomes 
\begin{align}\label{2.29}
\begin{split}
	VI(u)&=\langle\frac{1}{36k^4}(k^2+2k(k-\Delta)+(k-\Delta')^2)(k-\Delta'-R^*)i_{\mu}u,i_{\mu}u\rangle\\
	&=\frac{1}{36k^2}\langle(k-\Delta'-R^*)i_{\mu}u,i_{\mu}u\rangle+\frac{1}{18k^3}\langle(k-\Delta')(k-\Delta'-R^*)i_{\mu}u,i_{\mu}u\rangle\\
	&\quad+\frac{1}{36k^4}\langle(k-\Delta')^3i_{\mu}u,i_{\mu}u\rangle-\frac{1}{36k^4}\langle(k-\Delta')^2 R^*i_{\mu}u,i_{\mu}u\rangle\\
	&=-\frac{1}{36k^2}\langle(k-\Delta'-R^*)i_{\mu}u,i_{\mu}u\rangle+\frac{1}{18k^3}\langle(2k-\Delta')(k-\Delta'-R^*)i_{\mu}u,i_{\mu}u\rangle\\
	&\quad+\frac{1}{36k^4}\langle(k-\Delta')^3i_{\mu}u,i_{\mu}u\rangle-\frac{1}{36k^4}\langle(k-\Delta')^2 R^*i_{\mu}u,i_{\mu}u\rangle\\
	&=O\left(\frac{1}{k^3}\right)\|u\|^2.
\end{split}	
\end{align}
Here we have used Lemmas \ref{lemma4} and \ref{lemma3} to conclude
\begin{align*}
	&\quad -\frac{1}{36k^2}\langle(k-\Delta'-R^*)i_{\mu}u,i_{\mu}u\rangle+\frac{1}{36k^4}\langle(k-\Delta')^3i_{\mu}u,i_{\mu}u\rangle\\
	&=-\frac{1}{36k^2}\int_{\mc{X}_y}(-|\o{\n}\mu|^2)|u|^2e^{-k\phi}+\frac{1}{36k^4}\int_{\mc{X}_y}(-k^2|\o{\n}u|^2+O(k))|u|^2e^{-k\phi}\\
	&=\frac{1}{36k^4}\int_{\mc{X}_y}O(k)|u|^2e^{-k\phi}=O\left(\frac{1}{k^3}\right)\|u\|^2.
\end{align*}

Finally  the last term is
\begin{align}\label{2.23}
	\begin{split}	
VII(u)
	&=\frac{1}{36k^4}\langle(\Delta'+k)^{-1}(k-\Delta')(2k-\Delta')i_{\mu}u,(k-\Delta')(2k-\Delta')i_{\mu}u\rangle\\
	&\quad-\frac{1}{36k^4}\langle(2k-\Delta')^2R^*i_{\mu}u,(\Delta'+k)^{-1}(k-\Delta')i_{\mu}u\rangle.
	\end{split}
\end{align}
Note that 
\begin{align}\label{2.24}
	\begin{split}
		&\quad\langle\frac{1}{36k^4}(\Delta'+k)^{-1}(k-\Delta')(2k-\Delta')i_{\mu}u,(k-\Delta')(2k-\Delta')i_{\mu}u\rangle\\
		&\leq \frac{1}{36k^5}\langle(k-\Delta')(2k-\Delta')i_{\mu}u,(k-\Delta')(2k-\Delta')i_{\mu}u\rangle\\
		&=\frac{1}{36k^5}\langle(k-\Delta')^4i_{\mu}u,i_{\mu}u\rangle+\frac{1}{18k^4}\langle(k-\Delta')^3i_{\mu}u,i_{\mu}u\rangle\\
		&\quad+\frac{1}{36k^3}\langle(k-\Delta')^2i_{\mu}u,i_{\mu}u\rangle
		=O\left(\frac{1}{k^3}\right)\|u\|^2,
	\end{split}
\end{align}
where the last equality follows from Lemma \ref{lemma2}, Lemma \ref{lemma3} and Lemma \ref{lemma5}.

We estimate
 the second term in the RHS of (\ref{2.24}) as
\begin{align}\label{2.25}
\begin{split}
	&\quad |-\frac{1}{36k^4}\langle(2k-\Delta')^2R^*i_{\mu}u,(\Delta'+k)^{-1}(k-\Delta')i_{\mu}u\rangle|\\
	&\leq \frac{1}{36k^4}\|(2k-\Delta')^2R^*i_{\mu}u\|\cdot\|(\Delta'+k)^{-1}(k-\Delta')i_{\mu}u\|\\
	&\leq \frac{1}{36k^5}\|(2k-\Delta')^2R^*i_{\mu}u\|\cdot\langle(k-\Delta')^2i_{\mu}u, i_{\mu}u\rangle^{1/2}\\
	&= \frac{1}{36k^5}(O(k^2)\|u\|)\cdot (O(k^{\frac{1}{2}})\|u\|)\\
	&=O\left(\frac{1}{k^{\frac{5}{2}}}\right)\|u\|^2;
	\end{split}
\end{align}
indeed the
third equality follows from Lemma \ref{lemma2} and the following equality
\begin{align}
\begin{split}
	&\quad \|(2k-\Delta')^2R^*i_{\mu}u\|=|\langle(2k-\Delta')^4R^*i_{\mu}u, R^*i_{\mu}u\rangle|^{1/2}\\
	&=|\langle \left(k^4+4k^3(k-\Delta')+6k^2(k-\Delta')^2+4k(k-\Delta')^3+(k-\Delta')^4\right)i_{\tilde{\mu}}u, i_{\tilde{\mu}}u\rangle|^{1/2}\\
	&=O(k^2)\|u\|,
	\end{split}
\end{align}
where  $\tilde{\mu}= R^{\b{t}j}_{i\b{l}}\mu^i_{\b{t}}d\b{v}^l \otimes \frac{\p}{\p v^j}$ being the contraction of $\mu$ agains the curvature tensor $R$.

Substituting
(\ref{2.24}) and (\ref{2.25}) into (\ref{2.23}) results in
\begin{align}\label{2.30}
\langle\frac{1}{36k^4}(\Delta'+k)^{-1}(k-\Delta')(2k-\Delta')^2(k-\Delta'-R^*)i_{\mu}u,i_{\mu}u\rangle=O\left(\frac{1}{k^{\frac{5}{2}}}\right)\|u\|^2.	
\end{align}

Putting the quantities (\ref{2.26}), (\ref{2.22}), (\ref{2.27}), (\ref{2.28}), (\ref{2.21}), (\ref{2.29}), (\ref{2.30}) into (\ref{2.20}), we obtain
\begin{align}\label{2.31}
\begin{split}
&\quad \langle(\Delta'+k)^{-1}i_{\mu}u, i_{\mu}u\rangle\\
&=\int_{\mc{X}_y}\left(\frac{1}{2k}|\mu|^2+\left(-\frac{1}{6}|\o{\n}\mu|^2+\frac{1}{4}|\mu|^2_{R^*}\right)\frac{1}{k^2}+o(k^{-2})\right)|u|^2e^{-k\phi}.	
\end{split}
\end{align}

Finally substituting (\ref{2.31}) into (\ref{2.1}) we get
\begin{align}\label{2.32}
\begin{split}
	&\quad \langle\Theta^{E^k}u,u\rangle(\zeta,\b{\zeta})=-\sqrt{-1}\left(\int_{\mc{X}_y}kc(\phi)|u|^2e^{-k\phi}\right)(\zeta,\b{\zeta})\\
	&\quad+\int_{\mc{X}_y}\left(\frac{1}{2}|\mu|^2+\left(-\frac{1}{6}|\o{\n}\mu|^2+\frac{1}{4}|\mu|^2_{R^*}\right)\frac{1}{k}+o(k^{-1})\right)|u|^2e^{-k\phi}.
	\end{split}
\end{align}

Denote $d_k=\dim H^0(\mc{X}_y, L^k+K_{\mc{X}/M})$, and let $\{u_j\}_{j=1}^{d_k}$ be an orthogonal basis of $H^0(\mc{X}_y, L^k+K_{\mc{X}/M})$.  From (\ref{Bergman expansion}) and (\ref{metric consist}), we have 
\begin{align}\label{2.33}
\begin{split}
&\quad \sum_{j=1}^{d_k}|u_j|^2 e^{-k\phi}=\sum_{j=1}^{d_k}|u_j|^2_{L^2}\frac{\omega^n}{n!}\\
&=\left(k^n-\frac{\rho}{2}k^{n-1}+\left(-\frac{1}{6}\Delta\rho+\frac{1}{24}(|R|^2-4|Ric|^2+3\rho^2)\right)k^{n-2}+o(k^{n-2})\right)\frac{c(L,\phi)^n}{n!},	
\end{split}
\end{align}
where $c(L,\phi)=\frac{1}{2\pi}\omega$ is the Chern form of the Hermitian line bundle $(L,e^{-\phi})$.

We take the trace to both sides of (\ref{2.32}) and use the Bergman kernel expansion (\ref{2.33}), 
\begin{align}\label{2.34}
\begin{split}
	&\quad -\sqrt{-1}c_1(E^k,\|\bullet\|_k)(\zeta,\b{\zeta})=\frac{1}{2\pi}tr\Theta^{E_k}(\zeta,\b{\zeta})\\
	&=\frac{1}{2\pi}\int_{\mc{X}_y}\left(-\sqrt{-1}kc(\phi)(\zeta,\b{\zeta})+\frac{1}{2}|\mu|^2+\left(-\frac{1}{6}|\o{\n}\mu|^2+\frac{1}{4}|\mu|^2_{R^*}\right)\frac{1}{k}+o(k^{-1})\right)\\
	&\quad \cdot\left(k^n-\frac{\rho}{2}k^{n-1}+\left(-\frac{1}{6}\Delta\rho+\frac{1}{24}(|R|^2-4|Ric|^2+3\rho^2)\right)k^{n-2}+o(k^{n-2})\right)\frac{c(L,\phi)^n}{n!}\\
	&=\frac{k^{n+1}}{(2\pi)^{n+1}}\int_{\mc{X}_y}(-\sqrt{-1})c(\phi)(\zeta,\b{\zeta})\frac{\omega^n}{n!}+\frac{k^n}{(2\pi)^{n+1}}\int_{\mc{X}_y}\left(\frac{1}{2}|\mu|^2-\frac{\rho}{2}(-\sqrt{-1})c(\phi)(\zeta,\b{\zeta})\right)\frac{\omega^n}{n!}\\
	&\quad+\frac{k^{n-1}}{(2\pi)^{n+1}}\int_{\mc{X}_y}\left((-\sqrt{-1})c(\phi)(\zeta,\b{\zeta})\left(-\frac{1}{6}\Delta\rho+\frac{1}{24}(|R|^2-4|Ric|^2+3\rho^2)\right)-\frac{\rho}{4}|\mu|^2\right)\frac{\omega^n}{n!}\\
	&\quad +\frac{k^{n-1}}{(2\pi)^{n+1}}\left(-\frac{1}{6}\|\o{\n}\mu\|^2+\frac{1}{4}\|\mu\|^2_{R^*}\right)+o(k^{n-1}),
\end{split}	
\end{align}
where we have denoted $\|\o{\n}\mu\|^2=\int_{\mc{X}_y}|\o{\n}\mu|^2\frac{\omega^n}{n!}$, $\|\mu\|^2_{R^*}=\int_{\mc{X}_y}|\mu|^2_{R^*}\frac{\omega^n}{n!}$.  
We rewrite the integrals $\|\mu\|^2_{R^*}$ and $\|\o{\n}\mu\|^2$ (of the anti-holomorphic connection 
$\o{\n}\mu$) in terms of $\partial^\ast\mu$ and  the  holomorphic connection $\nabla' \mu$.
 By Akizuki-Nakano identity \cite[Chapter VII, Corollary (1.3)]{Dem}, we have 
\begin{align*}
\|\b{\p}^*\mu\|^2&=\langle\Delta''\mu,\mu\rangle\\
&=\langle\Delta'\mu,\mu\rangle+\langle[\sqrt{-1}R,\Lambda]\mu,\mu\rangle\\
&=\|\n'\mu\|^2-\langle\sqrt{-1}\Lambda R\mu,\mu\rangle\\
&=\|\n'\mu\|^2-\langle(\phi^{\b{s}t}\mu^i_{\b{j}}R^k_{is\b{t}}d\b{v}^j-\phi^{s\b{t}}\mu^i_{\b{t}}R^k_{is\b{j}}d\b{v}^j)\otimes \frac{\p}{\p v^k}, \mu\rangle\\
&=\|\n'\mu\|^2-\langle(\mu^i_{\b{j}}R_{i\b{l}}\phi^{\b{l}k}d\b{v}^j-\phi^{s\b{t}}\mu^i_{\b{t}}R^k_{is\b{j}}d\b{v}^j)\otimes \frac{\p}{\p v^k}, \mu\rangle\\
&=\|\n'\mu\|^2-\int_{\mc{X}_y}(\mu^i_{\b{j}}\o{\mu^s_{\b{t}}}R_{i\b{l}}\phi^{\b{l}k}\phi^{\b{j}t}\phi_{k\b{s}})\frac{\omega^n}{n!}+\int_{\mc{X}_y}\phi^{s\b{t}}R^{k}_{is\b{j}}\mu^i_{\b{t}}\o{\mu^p_{\b{q}}}\phi^{\b{j}q}\phi_{k\b{p}}\frac{\omega^n}{n!}\\
&=\|\n'\mu\|^2-\|\mu\|^2_{Ric}-\|\mu\|^2_{R^*},
\end{align*}
where we have denoted
$\|\mu\|^2_{Ric}:=\int_{\mc{X}_y}(\mu^i_{\b{j}}\o{\mu^s_{\b{t}}}R_{i\b{l}}\phi^{\b{l}k}\phi^{\b{j}t}\phi_{k\b{s}})\frac{\omega^n}{n!}$,
which again needs not to be nonnegative.
Therefore, 
\begin{align}\label{2.35}
\|\mu\|^2_{R^*}=\|\n'\mu\|^2-\|\mu\|^2_{Ric}-\|\b{\p}^*\mu\|^2.	
\end{align}
By its definition we find also
\begin{align}\label{2.36}
\begin{split}
	\|\o{\n}\mu\|^2&=\int_{\mc{X}_y}
\n_{\b{l}}\mu^i_{\b{j}}\phi^{\b{l}k}\n_k\o{\mu^j_{\b{i}}}
\frac{\omega^n}{n!}=-\int_{\mc{X}_y}\n_k\n_{\b{l}}\mu^i_{\b{j}}\phi^{\b{l}k}\o{\mu^j_{\b{i}}}\frac{\omega^n}{n!}\\
	&=\int_{\mc{X}_y}(\n_{\b{l}}\n_k-\n_k\n_{\b{l}})\mu^i_{\b{j}}\phi^{\b{l}k}\o{\mu^j_{\b{i}}}\frac{\omega^n}{n!}-\int_{\mc{X}_y}\n_{\b{l}}\n_k\mu^i_{\b{j}}\phi^{\b{l}k}\o{\mu^j_{\b{i}}}\frac{\omega^n}{n!}\\
	&=\int_{\mc{X}_y}(\p_{\b{l}}\Gamma^i_{ks}\mu^s_{\b{j}}+\p_k\o{\Gamma^s_{lj}}\mu^i_{\b{s}})\phi^{\b{l}k}\o{\mu^j_{\b{i}}}\frac{\omega^n}{n!}+\int_{\mc{X}_y}\n_k\mu^i_{\b{j}}\phi^{\b{l}k}\o{\n_{l}\mu^j_{\b{i}}}\frac{\omega^n}{n!}\\
	&=-2\|\mu\|^2_{Ric}+\|\n'\mu\|^2.
\end{split}	
\end{align}

Substituting (\ref{2.35}) and (\ref{2.36}) into (\ref{2.34}) we obtain
finally
\begin{align}\label{asymptotic 1}
\begin{split}
	&\quad -\sqrt{-1}c_1(E^k,\|\bullet\|_k)(\zeta,\b{\zeta})=\frac{k^{n+1}}{(2\pi)^{n+1}}\\
	&
\times
\int_{\mc{X}_y}(-\sqrt{-1})c(\phi)(\zeta,\b{\zeta})\frac{\omega^n}{n!}+\frac{k^n}{(2\pi)^{n+1}}\int_{\mc{X}_y}\left(\frac{1}{2}|\mu|^2-\frac{\rho}{2}(-\sqrt{-1})c(\phi)(\zeta,\b{\zeta})\right)\frac{\omega^n}{n!}\\
	&+\frac{k^{n-1}}{(2\pi)^{n+1}}\int_{\mc{X}_y}\left((-\sqrt{-1})c(\phi)(\zeta,\b{\zeta})\left(-\frac{1}{6}\Delta\rho+\frac{1}{24}(|R|^2-4|Ric|^2+3\rho^2)\right)-\frac{\rho}{4}|\mu|^2\right)\frac{\omega^n}{n!}\\
	&+\frac{k^{n-1}}{(2\pi)^{n+1}}\left(\frac{1}{12}\|\mu\|^2_{Ric}+\frac{1}{12}\|\n'\mu\|^2-\frac{1}{4}\|\b{\p}^*\mu\|^2\right)+o(k^{n-1}).
\end{split}	
\end{align}
This completes the proof of Theorem \ref{main theorem 1}.
\end{proof}

\begin{rem} 
It is a general fact \cite{Ma} that the $L^2$-curvature $c_1(E^k)$ above 
has an expansion in the integer powers of $k$, so that the
lower order term $o(k^{n-1})$ in our statement can be written as $O(k^{n-2})$. Indeed observe
that the fractional order $O(k^{n-1-\frac 12})$-terms in
the proof of Theorem \ref{main theorem 1}   are all
due to the estimate  $\Vert(\Delta' +k)^{-1}(k-\Delta')i_\mu u\Vert
\le C k^{-\frac 12 }\Vert u\Vert $. However  we can use again Lemma 
\ref{expanding} and prove that the traces of the quadratic forms 
involving $(\Delta' +k)^{-1}(k-\Delta')$ are actually of integer
order  instead of fractional order, e.g. the trace of the quadratic
form $IV(u)$ in the estimate
(\ref{2.28}) has an expansion of order $k^{-3 +n}$. It might be
interesting  to find a recursive formula for the coefficients of the expansion
$c_1(E^k)$ following our proof and using the Bergman kernel expansion.
\end{rem}

\subsection{The asymptotic of the curvature of Quillen metric}\label{Asy2}
We  compute now the asymptotic of the curvature of Qullien metric and prove Theorem \ref{main theorem 2}. 

By Theorem \ref{Bis} and Theorem \ref{Bis11},  we have
\begin{align}\label{Bisexp}
\begin{split}
	&\quad -c_1(\lambda,\|\bullet\|_Q)=\left\{\int_{\mc{X}/M} \text{td}(\mc{X}/M, (\sqrt{-1}\p\b{\p}\phi)_y))e^{\frac{1}{2\pi}(k\omega+\sqrt{-1}R^{K_{\mc{X}/M}})}\right\}^{(1,1)}\\
	&=\left\{\int_{\mc{X}/M}(1+\frac{1}{2}c_1(T_{\mc{X}/M},\phi)+\frac{1}{12}(c_1(T_{\mc{X}/M},\phi)^2+c_2(T_{\mc{X}/M},\phi))+\cdots)\right.\\
	&\quad\left.\cdot\sum_{i=0}^{\infty}\frac{(2\pi)^{-i}(k\omega+\sqrt{-1}R^{K_{\mc{X}/M}})^i}{i!}\right\}^{(1,1)}\\
&=\frac{k^{n+1}}{(2\pi)^{n+1}}\int_{\mc{X}/M}\frac{\omega^{n+1}}{(n+1)!}+\frac{k^n}{(2\pi)^{n+1}}\int_{\mc{X}/M}\frac{1}{2}(\sqrt{-1}R^{K_{\mc{X}/M}})\wedge \frac{\omega^n}{n!}\\
&\quad+\frac{k^{n-1}}{(2\pi)^{n+1}}\int_{\mc{X}/M}\frac{1}{12}(\sqrt{-1}R^{K_{\mc{X}/M}})^2\wedge\frac{\omega^{n-1}}{(n-1)!}\\
&\quad+\frac{k^{n-1}}{(2\pi)^{n-1}}\int_{\mc{X}/M}\frac{1}{12}c_2(T_{\mc{X}/M},\phi)\wedge\frac{\omega^{n-1}}{(n-1)!}+O(k^{n-2}),
\end{split}	
\end{align}
where $\text{td}$ is the Todd character forms, which has an expansion,
$$\text{td}(F,h)=1+\frac{1}{2}c_1(F,h)+\frac{1}{12}(c_1(F,h)^2+c_2(F,h))+\frac{1}{24}c_1(F,h)c_2(F,h)+\cdots$$
for any Hermitian vector bundle $(F,h)$, the second equality follows from $c_1(T_{\mc{X}/M},\phi)=-\frac{\sqrt{-1}}{2\pi}\p\b{\p}\log\det\phi=-\frac{\sqrt{-1}}{2\pi}R^{K_{\mc{X}/M}}$.

Now we consider the last term in the RHS of (\ref{Bisexp}),
\begin{align}\label{2.37}
\begin{split}
&\quad\int_{\mc{X}/M}c_2(T_{\mc{X}/M},\phi)\wedge\frac{\omega^{n-1}}{12(n-1)!}\\
&=\int_{\mc{X}/M}\frac{1}{2}\left(c_1(K_{\mc{X}/M},\phi)^2-\left(\frac{\sqrt{-1}}{2\pi}\right)^2tr(R\wedge R)\right)\wedge\frac{\omega^{n-1}}{12(n-1)!}\\
&=\left(\frac{1}{2\pi}\right)^{2}\int_{\mc{X}/M}\frac{1}{24}(\sqrt{-1}R^{K_{\mc{X}/M}})^2\wedge\frac{\omega^{n-1}}{(n-1)!}\\
&\quad-\left(\frac{1}{2\pi}\right)^{2}\int_{\mc{X}/M}\frac{1}{24}(\sqrt{-1})^2tr(R\wedge R)\wedge\frac{\omega^{n-1}}{(n-1)!},
\end{split}
\end{align}
where the curvature operator $R$  is defined by 
\begin{align}\label{2.38}
\begin{split}
R&=R^i_{j}\delta v^j\otimes i_{\frac{\p}{\p v^i}}\\
&=\left(R^i_{j\alpha\b{\beta}}dz^{\alpha}\wedge d\b{z}^{\beta}+R^i_{j\alpha\b{\beta}}dz^{\alpha}\wedge \delta\b{v}^l+R^i_{jk\b{\beta}}\delta v^k\wedge d\b{z}^\beta+R^i_{jk\b{l}}\delta v^k\wedge \delta\b{v}^l\right)\delta v^j\otimes i_{\frac{\p}{\p v^i}}.
\end{split}
\end{align}
Here the second and third curvature term is
\begin{align}\label{2.40}
R^i_{j\alpha\b{l}}=\n'_{j}(\mu_{\alpha})^i_{\b{l}}, \quad R^j_{ik\b{\beta}}=\o{\n'_t(\mu_{\beta})^s_{\b{k}}}\phi^{\b{t}j}\phi_{\b{s}i}.	
\end{align}
In fact, one can prove them in terms of normal coordinates, i.e. $\phi_{i\b{j}}=\delta_{ij}$, $\phi_{i\b{j}k}=0$ at a fix point, so
\begin{align*}
R^j_{ik\b{\beta}}&=i_{\frac{\delta}{\delta\b{z}^{\beta}}}i_{\frac{\p}{\p v^k}}\b{\p}(\p\phi_{i\b{l}}\phi^{\b{l}j})\\
&=-(\p_{\b{\beta}}-\phi_{\b{\beta}t}\phi^{t\b{s}}\p_{\b{s}})(\phi_{i\b{l}k}\phi^{\b{l}j})\\
&=-\phi_{i\b{j}k\b{\beta}}+\phi_{\b{\beta}t}\phi_{i\b{j}k\b{t}}	\\
&=\n'_j(-\phi_{\alpha\b{k}\b{l}}\phi^{\b{k}i}+\phi_{\alpha\b{k}}\phi_{k\b{i}\b{l}})\\
&=\n_j(-\p_{\b{l}}(\phi_{\alpha\b{k}}\phi^{\b{k}i}))=\n'_{j}(\mu_{\alpha})^i_{\b{l}},
\end{align*}
while second identity in (\ref{2.40}) holds similarly.

We compute  the second term in the RHS of (\ref{2.37}),
\begin{align}\label{2.39}
\begin{split}
&\quad(\sqrt{-1})^2\int_{\mc{X}/M}tr(R\wedge R)\frac{\omega^{n-1}}{(n-1)!}\\
&=\int_{\mc{X}/M}(2R^i_{j\alpha\b{\beta}}R^j_{ik\b{l}}\phi^{k\b{l}}
 -2R^i_{j\alpha\b{l}}R^j_{ik\b{\beta}}\phi^{k\b{l}})\frac{\omega^n}{n!}\sqrt{-1}dz^{\alpha}\wedge d\b{z}^{\beta}\\
&\quad +\int_{\mc{X}/M}R^i_{jk\b{l}}R^j_{is\b{t}}c(\phi)(\sqrt{-1}\delta v^k\wedge \delta \b{v}^l)\wedge (\sqrt{-1}\delta v^s\wedge \delta \b{v}^t)\wedge \frac{\omega^{n-1}}{(n-2)!}\\
&=2\int_{\mc{X}/M}R^i_{j\alpha\b{\beta}}R^j_{ik\b{l}}\phi^{k\b{l}}\frac{\omega^n}{n!}\sqrt{-1}dz^{\alpha}\wedge d\b{z}^{\beta}-2\langle\n'\mu_{\alpha},\n'\mu_{\beta}\rangle\sqrt{-1}dz^{\alpha}\wedge d\b{z}^{\beta}\\
&\quad+\int_{\mc{X}/M}(|Ric|^2-|R|^2)c(\phi)\frac{\omega^n}{n!},
\end{split}
\end{align}
where the second equality follows from  the fact
\begin{align}\label{2.53}
n(n-1)\alpha\wedge\beta\wedge \omega^{n-2}=\left(\Lambda\alpha\cdot\Lambda\beta	-\langle\alpha,\beta\rangle\right)\omega^n
\end{align}
for two real $(1,1)$-forms $\alpha$ and $\beta$.

\begin{lemma} The following identities hold
\begin{align}\label{le1}
\int_{\mc{X}/M}&(\sqrt{-1}R^{K_{\mc{X}/M}})\wedge \frac{\omega^n}{n!}=-\int_{\mc{X}/M}\rho c(\phi)\frac{\omega^n}{n!}+\langle\mu_{\alpha}, \mu_{\beta}\rangle\sqrt{-1}dz^{\alpha}\wedge d\b{z}^{\beta}
\end{align}
	and 
	\begin{align}\label{le2}
	\int_{\mc{X}/M}(\Delta\rho) c(\phi)\frac{\omega^n}{n!}=\left(-\int_{\mc{X}/M}(R^i_{j\alpha\b{\beta}}R^j_{ik\b{l}}\phi^{k\b{l}})\frac{\omega^n}{n!}-\langle \mu_{\alpha}, \mu_{\beta}\rangle_{Ric}\right)\sqrt{-1}dz^{\alpha}\wedge d\b{z}^{\beta},	
	\end{align}
	where $\langle \mu_{\alpha}, \mu_{\beta}\rangle_{Ric}:=\int_{\mc{X}/M}((\mu_{\alpha})^i_{\b{j}}\o{(\mu_{\beta})^s_{\b{t}}}R_{i\b{l}}\phi^{\b{l}k}\phi^{\b{j}t}\phi_{k\b{s}})\frac{\omega^n}{n!}$, which satisfies 
	\begin{align}\label{le4}
	\langle \mu_{\alpha}, \mu_{\beta}\rangle_{Ric}\zeta^{\alpha}\o{\zeta^{\beta}}=\|\mu\|^2_{Ric}.	
	\end{align}

\end{lemma}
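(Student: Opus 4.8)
The plan is to prove the two identities \eqref{le1} and \eqref{le2} by computing the relevant fiber integrals in local admissible coordinates, using the decomposition $\sqrt{-1}\p\b\p\phi = c(\phi) + \sqrt{-1}\phi_{i\b j}\delta v^i\wedge\delta\b v^j$ from Lemma \ref{lemma0} together with the curvature identities \eqref{2.38}, \eqref{2.40} already established. The key observation is that $\sqrt{-1}R^{K_{\mc X/M}} = -\sqrt{-1}\,\p\b\p\log\det(\phi_{i\b j})$ equals $-\operatorname{tr} R$, where $R$ is the curvature operator of $T_{\mc X/M}$ expanded as in \eqref{2.38}; so wedging with $\omega^n/n!$ only the purely horizontal and the "mixed" (one horizontal, one vertical) parts of $-\operatorname{tr} R$ survive, because the fiber integration $\int_{\mc X/M}$ kills anything with fewer than $n$ vertical factors and anything of horizontal bidegree exceeding $(1,1)$.

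First I would establish \eqref{le1}: write $-\operatorname{tr} R = -R^i_{i\alpha\b\beta}dz^\alpha\wedge d\b z^\beta - R^i_{i\alpha\b l}dz^\alpha\wedge\delta\b v^l - R^i_{ik\b\beta}\delta v^k\wedge d\b z^\beta - R^i_{ik\b l}\delta v^k\wedge\delta\b v^l$. When wedged against $\omega^n/n!$ and integrated over the fiber, the purely vertical term $-R^i_{ik\b l}\delta v^k\wedge\delta\b v^l$ contributes (after contracting with $\phi^{k\b l}$ using \eqref{2.53}) the scalar curvature density, giving $-\int\rho\, c(\phi)\,\omega^n/n!$; the two mixed terms combine, via \eqref{2.40} which identifies $R^i_{ik\b\beta}$ with $\o{\n'_t(\mu_\beta)^s_{\b k}}\phi^{\b t i}\phi_{\b s i}$ and $R^i_{i\alpha\b l}$ with $\n'_i(\mu_\alpha)^i_{\b l}$, into $\langle\mu_\alpha,\mu_\beta\rangle\sqrt{-1}dz^\alpha\wedge d\b z^\beta$ after an integration by parts on the fiber; and the purely horizontal term drops out because it has no vertical component and so integrates to zero. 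For \eqref{le2} I would apply $\Delta=\phi^{\b j i}\p_i\p_{\b j}$ to $\rho$ and recognize, again via \eqref{2.40} and commutation of covariant derivatives (producing a Ricci term), that $\int_{\mc X/M}(\Delta\rho)c(\phi)\,\omega^n/n!$ is exactly the mixed-curvature contraction $-\int R^i_{j\alpha\b\beta}R^j_{ik\b l}\phi^{k\b l}\,\omega^n/n!$ minus the Ricci-weighted norm $\langle\mu_\alpha,\mu_\beta\rangle_{Ric}$; this is essentially the second Bianchi identity for $R^{T_{\mc X/M}}$ read off in admissible coordinates, relating the fiber-Laplacian of the scalar curvature to the horizontal variation of the curvature. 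The compatibility \eqref{le4} is immediate from contracting the tensor identity with $\zeta^\alpha\o{\zeta^\beta}$ and comparing with the definition of $\|\mu\|^2_{Ric}$ in the proof of Theorem \ref{main theorem 1}.

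The main obstacle I anticipate is the bookkeeping in \eqref{le2}: one must carefully track the difference between $\n'_i(\mu_\alpha)^j_{\b l}$ and $\n'_j(\mu_\alpha)^i_{\b l}$ (i.e. the symmetry of the Kodaira--Spencer derivative) and the curvature commutator terms that arise when integrating by parts twice on the fiber, since a naive integration by parts produces both the desired $R\cdot R$ term and spurious terms that must be shown to cancel or to reassemble into $\langle\mu_\alpha,\mu_\beta\rangle_{Ric}$. Using normal coordinates at a point (as already done for \eqref{2.40}) will streamline this, reducing the identity to a pointwise tensor computation; the global statement then follows since both sides are tensorial. Once \eqref{le1} and \eqref{le2} are in hand, substituting them into \eqref{2.37} and \eqref{2.39}, and thence into \eqref{Bisexp}, will let us rewrite the Quillen-curvature expansion entirely in terms of $c(\phi)$, $\rho$, $|R|^2$, $|Ric|^2$, $|\mu|^2$, $\|\n'\mu\|^2$, $\|\mu\|^2_{Ric}$ and $\|\b\p^*\mu\|^2$, which is precisely the form \eqref{0.7}, proving Theorem \ref{main theorem 2}.
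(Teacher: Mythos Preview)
Your plan for \eqref{le1} has the form-degree bookkeeping backwards. Because $\omega=c(\phi)+\sqrt{-1}\phi_{i\b j}\delta v^i\wedge\delta\b v^j$ splits into a purely horizontal and a purely vertical $(1,1)$-form, every monomial in $\omega^n$ carries the \emph{same} number of holomorphic and antiholomorphic vertical factors; hence a mixed term such as $-R^i_{i\alpha\b l}\,dz^\alpha\wedge\delta\b v^l$ wedged with $\omega^n$ can never be completed to a top vertical form and \emph{vanishes} under $\int_{\mc X/M}$. It is the purely horizontal component $-R^i_{i\alpha\b\beta}\,dz^\alpha\wedge d\b z^\beta$ that survives (it pairs with the purely vertical piece $V^n$ of $\omega^n$), and it is this term---not the mixed ones---that produces $\langle\mu_\alpha,\mu_\beta\rangle$. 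The mechanism is the pointwise identity, proved in normal coordinates,
\[
\p_k\p_{\b l}c(\phi)_{\alpha\b\beta}=-R^s_{k\alpha\b\beta}\phi_{s\b l}-(\mu_\alpha)^j_{\b l}\o{(\mu_\beta)^s_{\b j}}\phi_{k\b s},
\]
whose trace gives $-R^i_{i\alpha\b\beta}=\Delta c(\phi)_{\alpha\b\beta}+(\mu_\alpha)^i_{\b j}\o{(\mu_\beta)^t_{\b s}}\phi^{\b j s}\phi_{i\b t}$; integrating over the fiber, Stokes kills $\Delta c(\phi)_{\alpha\b\beta}$ and leaves exactly $\langle\mu_\alpha,\mu_\beta\rangle$.

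The same displayed identity is the missing ingredient for \eqref{le2}. Your plan to compute $\Delta\rho$ directly and ``recognize'' the horizontal curvature $R^i_{j\alpha\b\beta}$ via \eqref{2.40} cannot work as stated: $\Delta\rho$ is a purely fiberwise quantity, so no horizontal derivative---hence no $R^i_{j\alpha\b\beta}$---can appear before you integrate by parts against $c(\phi)_{\alpha\b\beta}$. The paper's route is to use Stokes together with the contracted Bianchi identity $\n_s R_{k\b t}=\n_k R_{s\b t}$ to rewrite $\int_{\mc X_y}(\Delta\rho)\,c(\phi)_{\alpha\b\beta}\,\omega^n/n!$ as $\int_{\mc X_y}\phi^{\b l s}R_{s\b t}\phi^{\b t k}\,\p_k\p_{\b l}c(\phi)_{\alpha\b\beta}\,\omega^n/n!$, and then contract the displayed identity with $\phi^{\b l s}R_{s\b t}\phi^{\b t k}$ to obtain both $-R^i_{j\alpha\b\beta}R^j_{ik\b l}\phi^{k\b l}$ and $-\langle\mu_\alpha,\mu_\beta\rangle_{Ric}$ simultaneously.
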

\begin{proof}
For any fixed point $p\in\mc{X}_y$, $y\in M$, we take normal coordinates near $p$ such that $\phi_{i\b{j}}(p)=\delta_{ij}$, $\phi_{i\b{j}k}(p)=0$. Recall that $(\mu_{\alpha})^k_{\b{l}}=-\p_{\b{l}}(\phi_{\alpha\b{t}}\phi^{\b{t}k})$ and denote $c(\phi)_{\alpha\b{\beta}}=\phi_{\alpha\b{\beta}}-\phi_{\alpha\b{l}}\phi_{k\b{\beta}}\phi^{k\b{l}}$.  Evaluating at $p$
we see that
\begin{align}\label{le3}
\begin{split}
\p_{k}\p_{\b{l}}c(\phi)_{\alpha\b{\beta}}&=\p_k\p_{\b{l}}(\phi_{\alpha\b{\beta}}-\phi_{\alpha\b{j}}\phi^{\b{j}i}\phi_{i\b{\beta}})\\
&=\phi_{k\b{l}\alpha\b{\beta}}-\phi_{\alpha\b{j}k\b{l}}\phi_{j\b{\beta}}-\phi_{\alpha\b{j}}\phi_{i\b{\beta}k\b{l}}+\phi_{\alpha\b{j}}\phi_{k\b{l}j\b{i}}\phi_{i\b{\beta}}\\
&\quad-\phi_{\alpha\b{j}k}\phi_{j\b{\beta}\b{l}}-\phi_{\alpha\b{j}\b{l}}\phi_{j\b{\beta}k}\\
&=(-\b{\p}(\p\phi_{k\b{t}}\phi^{\b{t}s})\phi_{s\b{l}})(\frac{\delta}{\delta z^{\alpha}},\frac{\delta}{\delta \b{z}^{\beta}})-(\mu_{\alpha})^j_{\b{l}}\o{(\mu_{\beta})^s_{\b{j}}}\phi_{k\b{s}}\\
&=-R^s_{k\alpha\b{\beta}}\phi_{s\b{l}}-(\mu_{\alpha})^j_{\b{l}}\o{(\mu_{\beta})^s_{\b{j}}}\phi_{k\b{s}},
\end{split}
\end{align}
where the last equality follows from (\ref{2.38}). Consequentely
using  (\ref{2.12}) we get
	\begin{align}\label{2.54}
	\begin{split}
	\Delta c(\phi)_{\alpha\b{\beta}}&=\phi^{\b{l}k}\p_k\p_{\b{l}}c(\phi)_{\alpha\b{\beta}}\\
	&=-R^k_{k\alpha\b{\beta}}-(\mu_{\alpha})^j_{\b{l}}\o{(\mu_{\beta})^l_{\b{j}}}\\
	&=(\p\b{\p}\log\det\phi)(\frac{\delta}{\delta z^{\alpha}},\frac{\delta}{\delta\b{z}^{\beta}})-(\mu_{\alpha})^i_{\b{j}}\o{(\mu_{\beta})^t_{\b{s}}}\phi^{\b{j}s}\phi_{i\b{t}},
	\end{split}
	\end{align}
We perform the integration using Stoke's theorem
on  (\ref{le1}),
\begin{align*}
	\int_{\mc{X}/M}&(\sqrt{-1}R^{K_{\mc{X}/M}})\wedge \frac{\omega^n}{n!}=\int_{\mc{X}/M}\p\b{\p}\log\det\phi(\frac{\delta}{\delta z^{\alpha}},\frac{\delta}{\delta\b{z}^{\beta}})\frac{\omega^n}{n!}\sqrt{-1}dz^{\alpha}\wedge d\b{z}^{\beta}\\
	&\quad +\int_{\mc{X}/M}\p\b{\p}\log\det\phi(\frac{\delta}{\p v^{i}},\frac{\p}{\p\b{v}^{j}})\sqrt{-1}\delta v^i\wedge \delta\b{v}^j\wedge c(\phi)\frac{\omega^{n-1}}{(n-1)!}\\
	&=\int_{\mc{X}/M}\left((\mu_{\alpha})^i_{\b{j}}\o{(\mu_{\beta})^t_{\b{s}}}\phi^{\b{j}s}\phi_{i\b{t}}\sqrt{-1}dz^{\alpha}\wedge d\b{z}^{\beta}-\rho c(\phi)\right)\frac{\omega^n}{n!}\\
	&=-\int_{\mc{X}/M}\rho c(\phi)\frac{\omega^n}{n!}+\langle\mu_{\alpha}, \mu_{\beta}\rangle\sqrt{-1}dz^{\alpha}\wedge d\b{z}^{\beta},
\end{align*}
which proves (\ref{le1}).

On the other hand we have also  by (\ref{le3}) that
\begin{align*}
\begin{split}
	(\phi^{\b{l}s}R_{s\b{t}}\phi^{\b{t}k})\p_k\p_{\b{l}}c(\phi)_{\alpha\b{\beta}}&=\phi^{\b{l}i}R_{i\b{t}}\phi^{\b{t}k}(-R^s_{k\alpha\b{\beta}}\phi_{s\b{l}}-(\mu_{\alpha})^j_{\b{l}}\o{(\mu_{\beta})^s_{\b{j}}}\phi_{k\b{s}})\\
	&=-R_{i\b{t}}\phi^{\b{t}k}R^i_{k\alpha\b{\beta}}-\phi^{\b{l}i}R_{i\b{t}}(\mu_{\alpha})^i_{\b{l}}\o{(\mu_{\beta})^t_{\b{j}}}\\
	&=-R^i_{j\alpha\b{\beta}}R^j_{ik\b{l}}\phi^{k\b{l}}-(\mu_{\alpha})^i_{\b{j}}\o{(\mu_{\beta})^s_{\b{t}}}R_{i\b{l}}\phi^{\b{l}k}\phi^{\b{j}t}\phi_{k\b{s}}.
\end{split}	
\end{align*}
 By using Stoke's theorem again and noticing $\n_k\phi_{i\b{j}}=0$, we have
 \begin{align*}
 	&\quad \int_{\mc{X}/M}(\Delta\rho) c(\phi)\frac{\omega^n}{n!}=\int_{\mc{X}/M}\phi^{\b{l}s}\p_s\p_{\b{l}}(R_{k\b{t}}\phi^{\b{t}k})c(\phi)\frac{\omega^n}{n!}\\
 	&=\int_{\mc{X}/M}\phi^{\b{l}s}(\n_{\b{l}}\n_sR_{k\b{t}})\phi^{\b{t}k}c(\phi)\frac{\omega^n}{n!}\\
 	&=\int_{\mc{X}/M}\phi^{\b{l}s}(\n_{\b{l}}\n_k R_{s\b{t}})\phi^{\b{t}k}c(\phi)\frac{\omega^n}{n!}\\
 	&=\int_{\mc{X}/M}\phi^{\b{l}s}R_{s\b{t}}\phi^{\b{t}k}\n_{\b{l}}\n_kc(\phi)\frac{\omega^n}{n!}\\
 	&=\int_{\mc{X}/M}\phi^{\b{l}s}R_{s\b{t}}\phi^{\b{t}k}\p_k\p_{\b{l}}c(\phi)_{\alpha\b{\beta}}\frac{\omega^n}{n!}\sqrt{-1}dz^{\alpha}\wedge d\b{z}^{\beta}\\
 	&=\int_{\mc{X}/M}(-R^i_{j\alpha\b{\beta}}R^j_{ik\b{l}}\phi^{k\b{l}}-(\mu_{\alpha})^i_{\b{j}}\o{(\mu_{\beta})^s_{\b{t}}}R_{i\b{l}}\phi^{\b{l}k}\phi^{\b{j}t}\phi_{k\b{s}})\frac{\omega^n}{n!}\sqrt{-1}dz^{\alpha}\wedge d\b{z}^{\beta}\\
 	&=\left(-\int_{\mc{X}/M}(R^i_{j\alpha\b{\beta}}R^j_{ik\b{l}}\phi^{k\b{l}})\frac{\omega^n}{n!}-\langle \mu_{\alpha}, \mu_{\beta}\rangle_{Ric}\right)\sqrt{-1}dz^{\alpha}\wedge d\b{z}^{\beta}, 
 \end{align*}
which proves (\ref{le2}).
\end{proof}
The first term in the RHS of (\ref{Bisexp}) is, by (\ref{omega}) and Lemma \ref{lemma0},
\begin{align}\label{2.50}
\begin{split}
\int_{\mc{X}/M}\frac{\omega^{n+1}}{(n+1)!}
&=\int_{\mc{X}/M}\frac{(c(\phi)+\sqrt{-1}\phi_{i\b{j}}\delta v^i\wedge \delta v^j)^{n+1}}{(n+1)!}\\
&=\int_{\mc{X}/M}\frac{(n+1)c(\phi)(\sqrt{-1}\phi_{i\b{j}}\delta v^i\wedge \delta v^j)^{n}}{(n+1)!}\\
&=\int_{\mc{X}/M}c(\phi)\frac{\omega^n}{n!}.
\end{split}
\end{align}

We evaluate the 
Quillen curvature (\ref{Bisexp})
at  the vector $\zeta=\zeta^{\alpha}\frac{\p}{\p z^{\alpha}}\in T_yM$.
It is, by (\ref{2.37}), (\ref{2.39}), (\ref{le1}), (\ref{le2}), (\ref{le4}) and (\ref{2.50}),
\begin{align}\label{Bis1}
\begin{split}
	&\quad(\sqrt{-1}c_1(\lambda,\|\bullet\|))(\zeta,\b{\zeta})\\
	&=\frac{k^{n+1}}{(2\pi)^{n+1}}\int_{\mc{X}_y}(-\sqrt{-1})c(\phi)(\zeta,\b{\zeta})\frac{\omega^n}{n!}+\frac{k^n}{(2\pi)^{n+1}}\int_{\mc{X}_y}\left(\frac{1}{2}|\mu|^2-\frac{\rho}{2}(-\sqrt{-1})c(\phi)(\zeta,\b{\zeta})\right)\frac{\omega^n}{n!}\\
	&\quad+(-\sqrt{-1})\frac{k^{n-1}}{(2\pi)^{n+1}}\int_{\mc{X}_y}\frac{1}{8}(\sqrt{-1}R^{K_{\mc{X}/M}})^2\wedge\frac{\omega^{n-1}}{(n-1)!}(\zeta,\b{\zeta})\\
	&\quad -\frac{k^{n-1}}{(2\pi)^{n+1}}\frac{1}{24}\int_{\mc{X}_y}\left((-\sqrt{-1})c(\phi)(\zeta,\b{\zeta})(-2\Delta \rho+|Ric|^2-|R|^2)\right)\frac{\omega^n}{n!}\\
	&\quad+\frac{k^{n-1}}{(2\pi)^{n+1}}\frac{1}{12}(\|\n'\mu\|^2+\|\mu\|^2_{Ric})+O(k^{n-2}).
\end{split}	
\end{align}

Note that
\begin{align}\label{2.51}
\begin{split}
&\quad R^{K_{\mc{X}/M}}=\p\b{\p}\log\det\phi\\
	&=(\p\b{\p}\log\det\phi)(\frac{\delta}{\delta z^{\alpha}},\frac{\delta}{\delta \b{z}^{\beta}})dz^{\alpha}\wedge d\b{z}^{\beta}+(\p\b{\p}\log\det\phi)(\frac{\delta}{\delta z^{\alpha}},\frac{\p}{\p \b{v}^{j}})dz^{\alpha}\wedge \delta\b{v}^{j}\\
	&\quad+(\p\b{\p}\log\det\phi)(\frac{\p}{\p v^{i}},\frac{\delta}{\delta \b{z}^{\beta}})\delta v^{i}\wedge d\b{z}^{\beta}+(\p\b{\p}\log\det\phi)(\frac{\p}{\p v^{i}},\frac{\p}{\p \b{v}^{j}})\delta v^{i}\wedge \delta\b{v}^{j}
	\end{split}
\end{align}
and 
\begin{align}\label{2.52}
\begin{split}
	\b{\p}^*\mu_{\alpha}=(\p\b{\p}\log \det\phi)(\frac{\delta}{\delta z^{\alpha}},\frac{\p}{\p\b{v}^j})\phi^{\b{j}i}\frac{\p}{\p v^i}.
\end{split}	
\end{align}
In fact, one can prove (\ref{2.52}) in terms of normal coordinates, at a fixed point, one has
\begin{align*}
\b{\p}^*\mu_{\alpha} &=-\sqrt{-1}[\Lambda,\n']((\mu_\alpha)^i_{\b{j}}d\b{v}^j\otimes \frac{\p}{\p v^i})\\
&=-\phi^{s\b{j}}(\p_s(\mu_{\alpha})^i_{\b{j}}+\mu^k_{\b{j}}\Gamma^i_{ks})\frac{\p}{\p v^i}\\
&=\p_j\p_{\b{j}}(\phi_{\alpha\b{l}}\phi^{\b{l}i})\frac{\p}{\p v^i}\\
&=\left(\phi_{\alpha\b{i}j\b{j}}-\phi_{\alpha\b{l}}\phi_{j\b{j}l\b{i}}\right)\frac{\p}{\p v^i}\\
&=(\p\b{\p}\log \det\phi)(\frac{\delta}{\delta z^{\alpha}},\frac{\p}{\p\b{v}^j})\phi^{\b{j}i}\frac{\p}{\p v^i}.
\end{align*}

By (\ref{2.51}), (\ref{2.52}), (\ref{2.53}) and (\ref{2.54}), 
{the integral in the third  term in the RHS of (\ref{Bis1})
can be computed as
\begin{align*}
	\begin{split}
		&\quad\int_{\mc{X}/M}(\sqrt{-1}R^{K_{\mc{X}/M}})^2\wedge \frac{\omega^{n-1}}{(n-1)!}\\
		&=2\int_{\mc{X}/M}(\p\b{\p}\log\det\phi)(\frac{\delta}{\delta z^{\alpha}},\frac{\delta}{\delta \b{z}^{\beta}})(\p\b{\p}\log\det\phi)(\frac{\p}{\p v^{i}},\frac{\p}{\p \b{v}^{j}})\sqrt{-1}\delta v^{i}\wedge\delta\b{v}^{j}\frac{\omega^{n-1}}{(n-1)!}\sqrt{-1}dz^{\alpha}\wedge d\b{z}^{\beta}\\
		&-2\int_{\mc{X}/M}(\p\b{\p}\log\det\phi)(\frac{\delta}{\delta z^{\alpha}},\frac{\p}{\p \b{v}^{j}})(\p\b{\p}\log\det\phi)(\frac{\p}{\p v^{i}},\frac{\delta}{\delta \b{z}^{\beta}}) \sqrt{-1}\delta v^{i}\wedge\delta\b{v}^{j}\frac{\omega^{n-1}}{(n-1)!}\sqrt{-1}dz^{\alpha}\wedge d\b{z}^{\beta}\\
		&+\int_{\mc{X}/M}(\p\b{\p}\log\det\phi)(\frac{\p}{\p v^{i}},\frac{\p}{\p \b{v}^{j}})(\p\b{\p}\log\det\phi)(\frac{\p}{\p v^{k}},\frac{\p}{\p \b{v}^{l}})\sqrt{-1}\delta v^{i}\wedge \delta\b{v}^{j}\wedge \sqrt{-1}\delta v^{k}\wedge \delta\b{v}^{l}c(\phi)\frac{\omega^{n-2}}{(n-2)!}\\
		&=-2\int_{\mc{X}/M}\rho(\Delta c(\phi)_{\alpha\b{\beta}}+(\mu_{\alpha})^i_{\b{j}}\o{(\mu_{\beta})^t_{\b{s}}}\phi^{\b{j}s}\phi_{i\b{t}})\frac{\omega^n}{n!}\sqrt{-1}dz^{\alpha}\wedge d\b{z}^{\beta}\\
		&\quad -2\langle\b{\p}^*\mu_{\alpha},\b{\p}^*\mu_{\beta}\rangle\sqrt{-1}dz^{\alpha}\wedge d\b{z}^{\beta}+\int_{\mc{X}/M}(\rho^2-|Ric|^2)c(\phi)\frac{\omega^n}{n!}.
		\end{split}
\end{align*}
Evaluted at the vector $\zeta=\zeta^{\alpha}\frac{\p}{\p z^{\alpha}}\in T_yM$ it is,  by Stoke's theorem,
\begin{align}\label{2.55}
\begin{split}
	&\quad(-\sqrt{-1})\int_{\mc{X}/M}(\sqrt{-1}R^{K_{\mc{X}/M}})^2\wedge \frac{\omega^{n-1}}{(n-1)!}(\zeta,\b{\zeta})\\
	&=\int_{\mc{X}_y}\left((-\sqrt{-1})c(\phi)(\zeta,\b{\zeta})(-2\Delta\rho+\rho^2-|Ric|^2)-2\rho|\mu|^2\right)\frac{\omega^n}{n!}-2\|\b{\p}^*\mu\|^2.
	\end{split}
\end{align}

Substituting (\ref{2.55}) into (\ref{Bis1}) we have proved 
\begin{prop} 
The  curvature of the Quillen metric has the following expansion in $k$,
\begin{align}\label{asymptotic 2}
\begin{split}
&\quad (\sqrt{-1}c_1(\lambda,\|\bullet\|))(\zeta,\b{\zeta})\\
&=\frac{k^{n+1}}{(2\pi)^{n+1}}\int_{\mc{X}_y}(-\sqrt{-1})c(\phi)(\zeta,\b{\zeta})\frac{\omega^n}{n!}+\frac{k^n}{(2\pi)^{n+1}}\int_{\mc{X}_y}\left(\frac{1}{2}|\mu|^2-\frac{\rho}{2}(-\sqrt{-1})c(\phi)(\zeta,\b{\zeta})\right)\frac{\omega^n}{n!}\\
	&\quad+\frac{k^{n-1}}{(2\pi)^{n+1}}\int_{\mc{X}_y}\left((-\sqrt{-1})c(\phi)(\zeta,\b{\zeta})\left(-\frac{1}{6}\Delta\rho+\frac{1}{24}(|R|^2-4|Ric|^2+3\rho^2)\right)-\frac{\rho}{4}|\mu|^2\right)\frac{\omega^n}{n!}\\
	&\quad +\frac{k^{n-1}}{(2\pi)^{n+1}}\left(\frac{1}{12}\|\mu\|^2_{Ric}+\frac{1}{12}\|\n'\mu\|^2-\frac{1}{4}\|\b{\p}^*\mu\|^2\right)+O(k^{n-2}).
\end{split}
\end{align}
\end{prop}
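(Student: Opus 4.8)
The plan is to start from the Bismut--Gillet--Soul\'e curvature formula of Theorem \ref{Bis}, applied with $F = L^k + K_{\mc{X}/M}$, and to expand the right-hand side of (\ref{Bisexp}) in powers of $k$. Since $\text{ch}(F) = \exp\big(\tfrac{1}{2\pi}(k\omega + \sqrt{-1}R^{K_{\mc{X}/M}})\big)$ and the fibers have complex dimension $n$, the fiber integral of $\text{td}(T_{\mc{X}/M})\,\text{ch}(F)$ truncates after finitely many terms, and grouping by powers of $k$ produces exactly the coefficients of $k^{n+1}$, $k^n$, $k^{n-1}$ together with an $O(k^{n-2})$ remainder. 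The $k^{n+1}$ coefficient is $\int_{\mc{X}/M}\omega^{n+1}/(n+1)!$, which by Lemma \ref{lemma0} collapses to $\int_{\mc{X}/M} c(\phi)\,\omega^n/n!$ (this is (\ref{2.50})). The $k^n$ coefficient is $\tfrac12\int_{\mc{X}/M}(\sqrt{-1}R^{K_{\mc{X}/M}})\wedge\omega^n/n!$; here I would use the identity (\ref{le1}), proved by working in normal fiber coordinates, computing $\Delta c(\phi)_{\alpha\bar\beta}$ as in (\ref{2.54}), and integrating by parts, to rewrite it as $-\tfrac12\int\rho\, c(\phi) + \tfrac12\langle\mu_\alpha,\mu_\beta\rangle$, which already matches the $k^n$ term of the $L^2$-expansion (\ref{asymptotic 1}).

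The substance is the $k^{n-1}$ coefficient, which splits into a contribution $\tfrac1{12}\int(\sqrt{-1}R^{K_{\mc{X}/M}})^2\wedge\omega^{n-1}/(n-1)!$ from the Chern character and a contribution $\tfrac1{12}\int c_2(T_{\mc{X}/M})\wedge\omega^{n-1}/(n-1)!$ from the Todd class. For the latter I would write $c_2 = \tfrac12\big(c_1^2 - (\tfrac{\sqrt{-1}}{2\pi})^2\,\text{tr}(R\wedge R)\big)$ as in (\ref{2.37}), reducing everything to $\text{tr}(R\wedge R)\wedge\omega^{n-1}$. The key step is then to decompose the relative curvature $R$ into its four bidegree components (\ref{2.38}) and to identify the mixed components $R^i_{j\alpha\bar l} = \nabla'_j(\mu_\alpha)^i_{\bar l}$ and $R^j_{ik\bar\beta}$ in terms of $\overline{\nabla'\mu_\beta}$, which one checks in normal coordinates (equation (\ref{2.40})). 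Using the algebraic identity (\ref{2.53}) for wedge products of real $(1,1)$-forms with $\omega^{n-2}$, this expresses $\text{tr}(R\wedge R)\wedge\omega^{n-1}$ as the sum of a horizontal term $\int R^i_{j\alpha\bar\beta}R^j_{ik\bar l}\phi^{k\bar l}$, the norm $-\langle\nabla'\mu_\alpha,\nabla'\mu_\beta\rangle$, and the scalar term $\int(|Ric|^2 - |R|^2)c(\phi)$ (equation (\ref{2.39})). The leftover horizontal integral is removed via Lemma (\ref{le2}), which expresses $\int(\Delta\rho)c(\phi)$ precisely in terms of $\int R^i_{j\alpha\bar\beta}R^j_{ik\bar l}\phi^{k\bar l}$ and $\langle\mu_\alpha,\mu_\beta\rangle_{Ric}$; the proof of (\ref{le2}) is another normal-coordinate computation using (\ref{le3}) together with the commutation $\nabla_{\bar l}\nabla_s R_{k\bar t} = \nabla_{\bar l}\nabla_k R_{s\bar t}$ and Stokes' theorem.

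It then remains to compute the first piece $\int(\sqrt{-1}R^{K_{\mc{X}/M}})^2\wedge\omega^{n-1}/(n-1)!$. I would expand $R^{K_{\mc{X}/M}} = \partial\bar\partial\log\det\phi$ into its four components (\ref{2.51}), identify its mixed components with $\bar\partial^*\mu_\alpha$ through (\ref{2.52}) (again checked in normal coordinates), and apply (\ref{2.53}), the formula (\ref{2.54}) for $\Delta c(\phi)_{\alpha\bar\beta}$, and Stokes' theorem; this gives (\ref{2.55}). Substituting (\ref{2.55}), together with the consequences of (\ref{le1}), (\ref{le2}), (\ref{le4}), (\ref{2.37}) and (\ref{2.39}), into the expanded formula (\ref{Bis1}) and evaluating at $\zeta = \zeta^\alpha\partial/\partial z^\alpha$ yields (\ref{asymptotic 2}). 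The main obstacle throughout is the curvature bookkeeping: keeping the four bidegree blocks of both $R^{T_{\mc{X}/M}}$ and $R^{K_{\mc{X}/M}}$ separate, correctly matching the mixed blocks to $\nabla'\mu$ and to $\bar\partial^*\mu$, and arranging the several integrations by parts so that the scalar-curvature Laplacian terms and the norms of $\mu$ assemble into exactly the combination $\tfrac1{12}\|\mu\|^2_{Ric} + \tfrac1{12}\|\nabla'\mu\|^2 - \tfrac14\|\bar\partial^*\mu\|^2$ already found for the $L^2$-curvature, which simultaneously proves Theorem \ref{main theorem 2}.
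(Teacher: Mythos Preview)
Your proposal is correct and follows essentially the same approach as the paper: you start from the Bismut--Gillet--Soul\'e formula, expand (\ref{Bisexp}) in powers of $k$, treat the $k^{n+1}$ and $k^n$ coefficients via (\ref{2.50}) and (\ref{le1}), split the $k^{n-1}$ coefficient into the $(\sqrt{-1}R^{K_{\mc{X}/M}})^2$ and $c_2$ pieces, and handle these through the bidegree decompositions (\ref{2.38}), (\ref{2.51}), the identifications (\ref{2.40}), (\ref{2.52}) of the mixed blocks with $\nabla'\mu$ and $\bar\partial^*\mu$, the algebraic identity (\ref{2.53}), and the integration-by-parts lemmas (\ref{le2}) and (\ref{2.55}). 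The order and logic match the paper's argument step for step.
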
 

From above Proposition, we proved Theorem \ref{main theorem 2}.

\subsection{An application}

In this subsection, we will prove Corollary \ref{cor1}.

For any positive integer $k$ write temporarily
\begin{align*}
F=L^k+K_{\mc{X}/M},
\end{align*}
where $K_{\mc{X}/M}$ is the relative canonical line bundle  endowed with the following metric,
\begin{align}\label{2.321}
(\det\phi)^{-1}:=(\det(\phi_{i\b{j}}))^{-1}.	
\end{align}

As in subsection \ref{sub1}, the operator $D_y=\b{\p}_y+\b{\p}^*_y$ acts on  $\oplus_{p\geq 0}A^{0,p}(\mc{X}_y, F)$. 
Take  a  small constant $b>0$ that is smaller than the all positive eigenvalues of $D_y$. Then
\begin{align*}
K^{b,p}_y\cong H^{p}(\mc{X}_y,F)=H^p(\mc{X}_y, K_{\mc{X}_y}+L^k).
\end{align*}
 By Kodaira vanishing theorem,
  \begin{align*}
  K^{b,0}_y\cong H^{0}(\mc{X}_y,L^k+K_{\mc{X}_y})\cong \pi_*(L^k+K_{\mc{X}/M})_y  \quad K_y^{b,p}=0, \quad \text{for}\quad p\geq 1. 	
  \end{align*}
  So
  \begin{align}\label{3.1}
  \lambda^b=(\det \pi_*(L^k+K_{\mc{X}_y}))^{-1}.	
  \end{align}

By (\ref{L2 metric}) and (\ref{2.321}), we have
\begin{align}\label{metric consist}
|u|^2e^{-k\phi}=(\sqrt{-1})^2|u'|^2e^{-k\phi}dv\wedge d\b{v}=|u'|^2e^{-k\phi}(\det\phi)^{-1}\frac{\omega^n}{n!}=|u|^2_{L^2}\frac{\omega^n}{n!},
\end{align}
that is, the $L^2$-metric $\|\bullet\|_k$ on $\pi_*(L^k+K_{\mc{X}/M})$ given by (\ref{L2 metric}) coincides with the standard $L^2$-metric on $\pi_*(L^k+K_{\mc{X}/M})$ induced by $(\mc{X}_y, \omega|_y)$, $(K_{\mc{X}_y}, (\det\phi)^{-1})$ and $(L,e^{-\phi})$.
Therefore, the $L^2$-metric $(|\bullet|^b)^2$ is dual to the determinant of the metric $\|\bullet\|^2$.  By (\ref{torsion}), we have
\begin{align}\label{torsion1}
(\|\bullet\|^b)^2=(|\bullet|^b)^2(\tau_k(\b{\p}^{(b,+\infty)}))^2=(\det\|\bullet\|^2_k)^* (\tau_k(\b{\p}))^2,	
\end{align}
for $b>0$ small enough, where $\tau_k(\b{\p})=\tau_{k}(\b{\p}^{(b,+\infty)})$ is the analytic torsion associated with $(\mc{X}, \omega=\sqrt{-1}\p\b{\p}\phi))$ and $(L^k, e^{-k\phi})$. Therefore,
\begin{align}\label{Torsion}
\begin{split}
\frac{\sqrt{-1}}{2\pi}\p\b{\p}\log (\tau_k(\b{\p}))^2
&=-c_1(\lambda,\|\bullet\|_Q)-c_1(E^k, \|\bullet\|_k).
\end{split}
\end{align}

\vspace{5mm}

{\it Proof of Corollary \ref{cor1}.}
Substituting (\ref{asymptotic 1}) and (\ref{asymptotic 2}) into (\ref{Torsion}), we obtain
\begin{align*}
\begin{split}
\frac{1}{2\pi}\p\b{\p}\log(\tau_k(\b{\p}))^2(\zeta,\b{\zeta})
&=(-\sqrt{-1})(-c_1(\lambda,\|\bullet\|_Q)-c_1(E^k, \|\bullet\|_k))(\zeta,\b{\zeta})\\
&=o\left(k^{n-1}\right).\\
\end{split}
\end{align*}
Therefore, 
\begin{align*}
\p\b{\p}\log\tau_k(\b{\p})=o(k^{n-1}).	
\end{align*}
 
\rightline{$\Box$}

\end{document}